\newcommand{\cmax}[1]{c_{\max}^{(#1)}}
\newcommand{\Z}{\mathbb{Z}}
\newcommand{\Oh}{\mathcal{O}}
\newcommand{\Troot}{\mathcal{T}^{\bullet}}
\newcommand{\E}{\mathbb{E}}
\newcommand{\V}{\mathbb{V}}
\renewcommand{\P}{\mathbb{P}}
\DeclareMathOperator{\Cov}{Cov}
\DeclareMathOperator{\diag}{diag}
\DeclarePairedDelimiter{\abs}{\lvert}{\rvert}
\DeclarePairedDelimiter{\floor}{\lfloor}{\rfloor}
\DeclarePairedDelimiter{\ceil}{\lceil}{\rceil}
\newif\ifdetails
\newcommand{\DETAIL}[1]%
{\ifdetails\par\fbox{\begin{minipage}{0.9\linewidth}\textit{Detail:}
      #1\end{minipage}}\par\fi}
\newcommand{\TODO}[1]%
{\ifdetails\par\fbox{\begin{minipage}{0.9\linewidth}\textbf{TODO:}
      #1\end{minipage}}\par\fi}
\newtheorem{lemma}{Lemma}
\newtheorem{theorem}[lemma]{Theorem}
\newtheorem{corollary}[lemma]{Corollary}
\theoremstyle{remark}
\newtheorem{remark}{Remark}
\newtheorem{definition}[remark]{Definition}
\title{The uncover process for random labeled trees}
\author{Benjamin Hackl}
\address[Benjamin Hackl]{
   University of Graz, Austria;
   University of Klagenfurt, Austria;
   Uppsala University, Sweden
}
\email{math@benjamin-hackl.at}
\author{Alois Panholzer}
\address[Alois Panholzer]{
   TU Wien, Austria
}
\email{alois.panholzer@tuwien.ac.at}
\author{Stephan Wagner}
\address[Stephan Wagner]{
   Uppsala University, Sweden
}
\email{stephan.wagner@math.uu.se}
\thanks{The third author was supported by the Knut and Alice Wallenberg Foundation. \\
An extended abstract of this paper appeared in the Proceedings of the 33rd International
Conference on Probabilistic, Combinatorial and Asymptotic Methods for the Analysis of Algorithms
(AofA 2022)~\cite{Hackl-Panholzer-Wagner:2022:uncover-extended}.}
\begin{document}

\begin{abstract}
   We consider the process of uncovering the vertices of a random labeled tree according to their labels. First, a labeled tree with $n$ vertices is generated uniformly at random. Thereafter, the vertices are uncovered one by one, in order of their labels. With each new vertex, all edges to previously uncovered vertices are uncovered as well. In this way, one obtains a growing sequence of forests. Three particular aspects of this process are studied in this work: first the number of edges, which we prove to converge to a stochastic process akin to a Brownian bridge after appropriate rescaling. Second, the connected component of a fixed vertex, for which different phases are identified and limiting distributions determined in each phase. Lastly, the largest connected component, for which we also observe a phase transition.
\end{abstract}

\maketitle

\section{Introduction}

We consider the process of uncovering the vertices of a random tree: starting either from
one of the $n^{n-2}$ unrooted or one of the $n^{n-1}$ rooted unordered labeled trees of
size $n$ (i.e., with $n$ vertices) chosen uniformly at random, we uncover the vertices one by one in order of their labels. This yields a growing sequence of forests induced by the uncovered vertices, and we are interested in the evolution of these forests from the first vertex to the point that all vertices are uncovered.

This model is motivated by stochastic models known as coalescent models for particle coalescence,
most notably the additive and the multiplicative coalescent~\cite{Bertoin:2006:random-fragmentation-coagulation} and the Kingman coalescent~\cite{Kingman:1982:coalescent}. To make the distinction between these classical coalescent models
and our model more explicit, let us briefly revisit the additive coalescent
model (see~\cite{Aldous-Pitman:1998:additive-coalescent}) as a prototypical example.
This model describes a Markov process on a state space
consisting of tuples $(x_1, x_2, \dots)$ with $x_1 \geq x_2 \geq \dots \geq 0$
and $\sum_{i\geq 0} x_i = 1$ that model the fragmentation of a unit mass into
clusters of mass $x_i$. Pairs of clusters with masses $x_i$ and $x_j$ then merge
into a new cluster of mass $x_i + x_j$ at rate $x_i + x_j$. In the corresponding
discrete time version of the process, exactly two clusters are merged in every
time step. There are various combinatorial settings in which this discrete
additive coalescent model appears, for example in the evolution of parking
blocks in parking schemes related to ``hashing with linear probing'' \cite{Chassaing-Louchard:2002:parking-blocks}
and in a certain scheme for merging forests by uncovering one edge
in every time step \cite{Pitman:1999:coalescent-random-forests}. There is also a rich literature on random (edge) cutting of trees, starting with the work of Meir and Moon \cite{Meir-Moon:1970:cutting}, see also \cite{Addario-Broutin-Holmgren:2014:cutting,Fill-Kapur-Panholzer:2006:destruction,Janson:2006:random-cutting,Kuba-Panholzer:2008:isolating}. Fragmentation processes on trees have also been studied extensively in a continuous setting, see e.g.~\cite{Brezunza-Holmgren-2020:invariance,Miermont:2003:self-similar,Miermont:2005:self-similar,Thevenin:2021:geometric}. Lastly, the special case of our model in which the underlying tree is always a path (with random labels) rather than a random labeled tree was considered by Janson in \cite{Janson:2009:sorting}.

While the incarnation of the additive coalescent in which edges are uncovered
successively is very much related in spirit to our vertex uncover model, the
underlying processes are fundamentally different: these classical coalescent models
rely on the fact that exactly two clusters are merged in every time step, which
is not the case in our model. When uncovering a new vertex, a more or less arbitrary
number of edges (including none at all) can be uncovered. There are coalescent
models like the $\Lambda$-coalescent, a generalization of the Kingman coalescent~\cite{Pitman:1999:multiple-collisions}, which allow for more than two clusters being
merged---however, at present we are not aware of any known coalescent model that is
able to capture the behavior of the vertex uncover process.

\begin{figure}[ht]
   \centering
   \begin{subfigure}[t]{0.32\linewidth}
      \includegraphics[width=\linewidth]{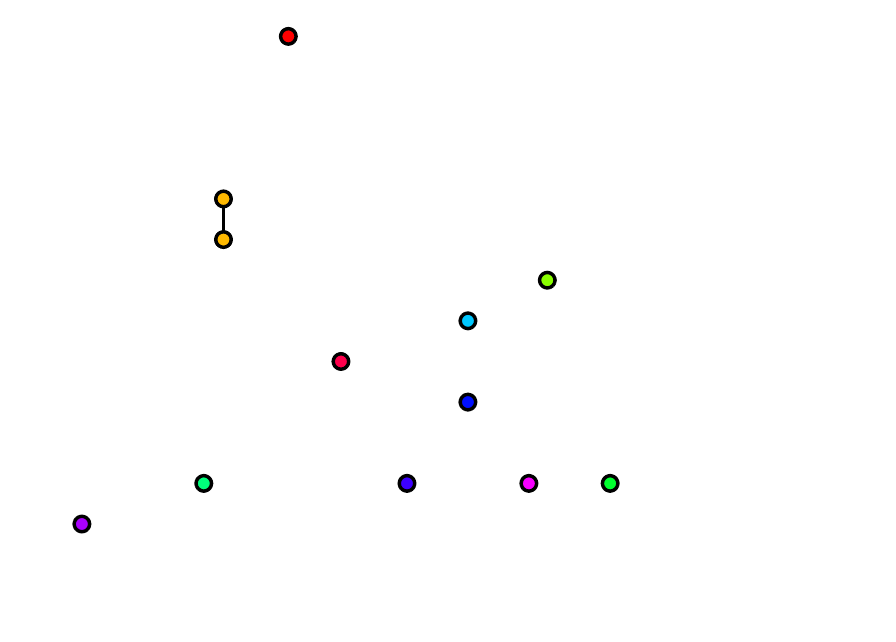}
   \end{subfigure}
   \begin{subfigure}[t]{0.32\linewidth}
      \includegraphics[width=\linewidth]{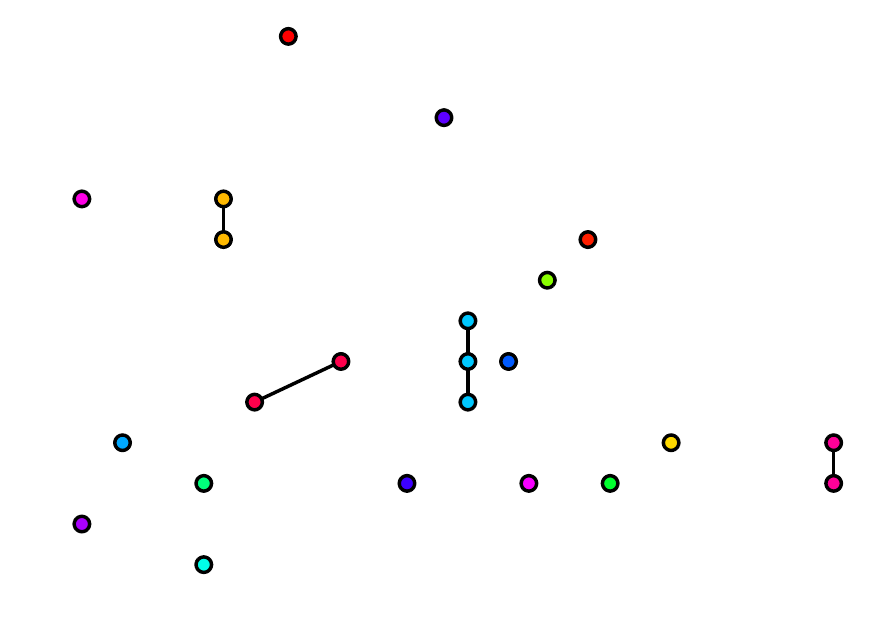}
   \end{subfigure}
   \begin{subfigure}[t]{0.32\linewidth}
      \includegraphics[width=\linewidth]{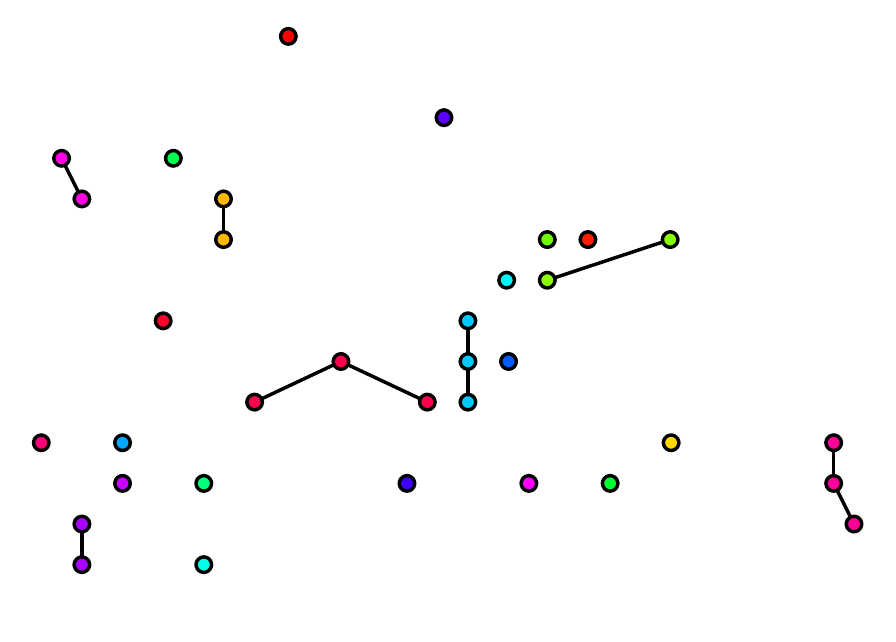}
   \end{subfigure}\\
   \begin{subfigure}[t]{0.32\linewidth}
      \includegraphics[width=\linewidth]{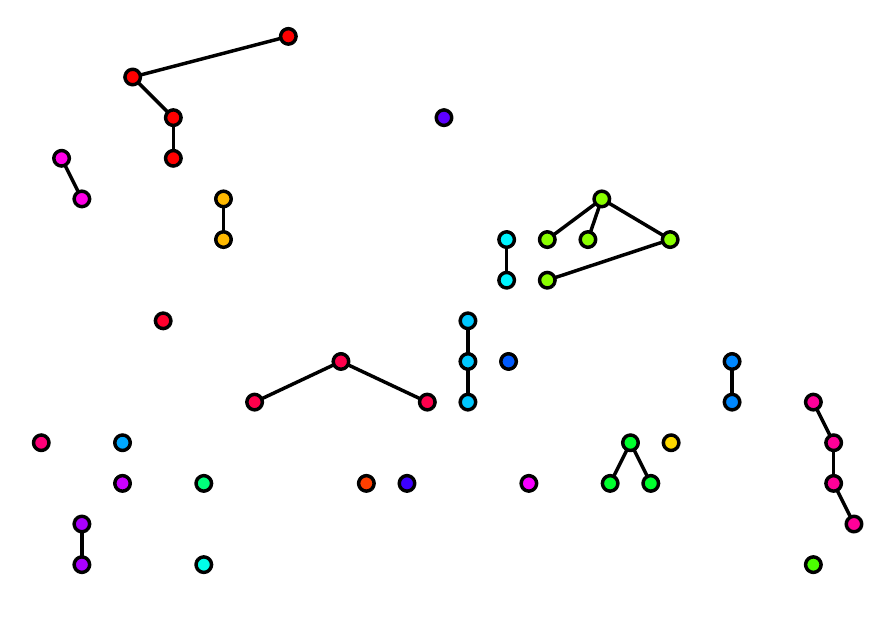}
   \end{subfigure}
   \begin{subfigure}[t]{0.32\linewidth}
      \includegraphics[width=\linewidth]{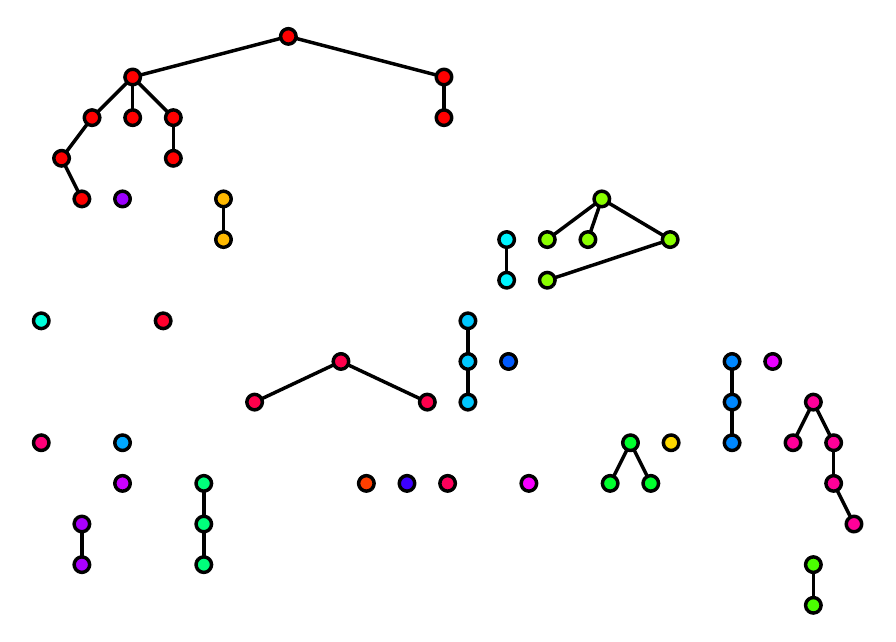}
   \end{subfigure}
   \begin{subfigure}[t]{0.32\linewidth}
      \includegraphics[width=\linewidth]{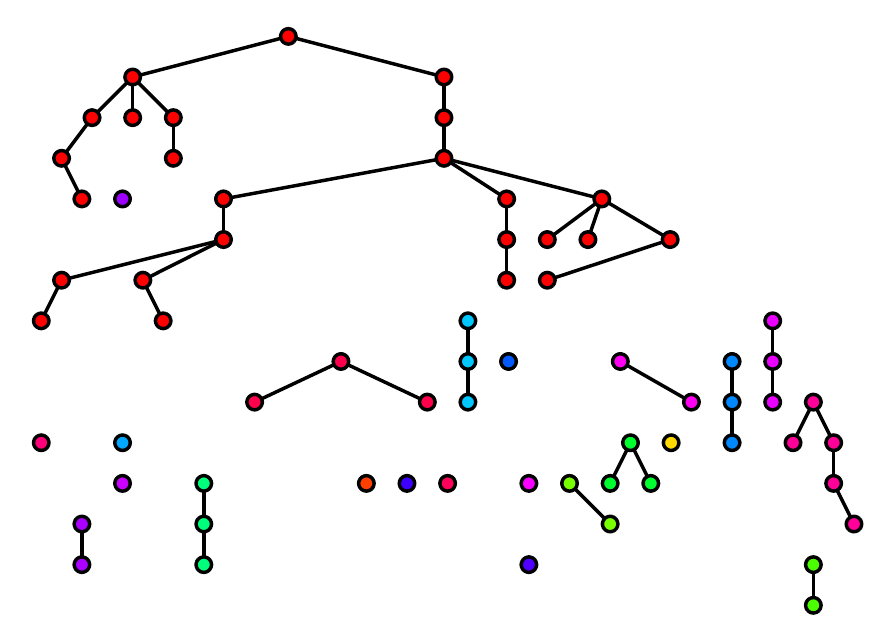}
   \end{subfigure}\\
   \begin{subfigure}[t]{0.32\linewidth}
      \includegraphics[width=\linewidth]{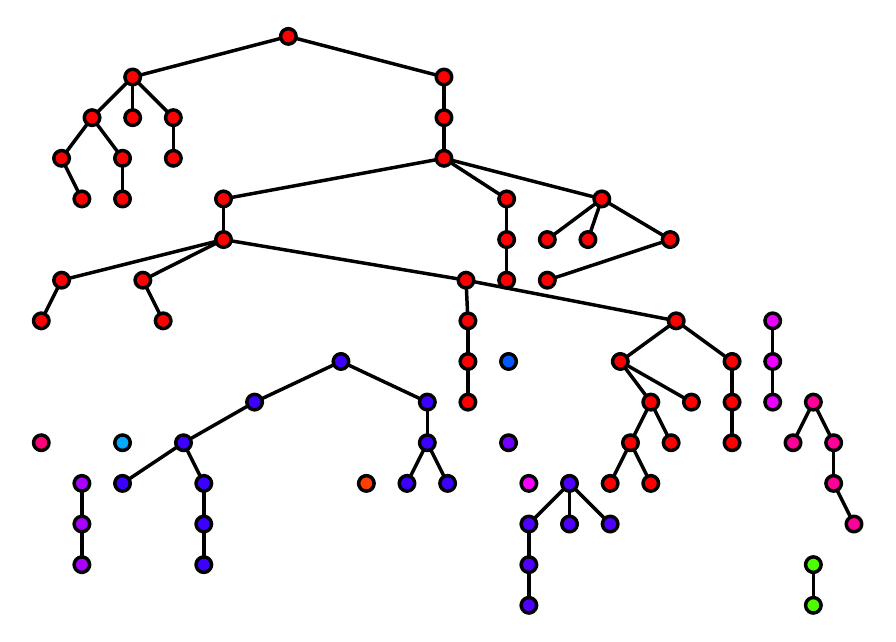}
   \end{subfigure}
   \begin{subfigure}[t]{0.32\linewidth}
      \includegraphics[width=\linewidth]{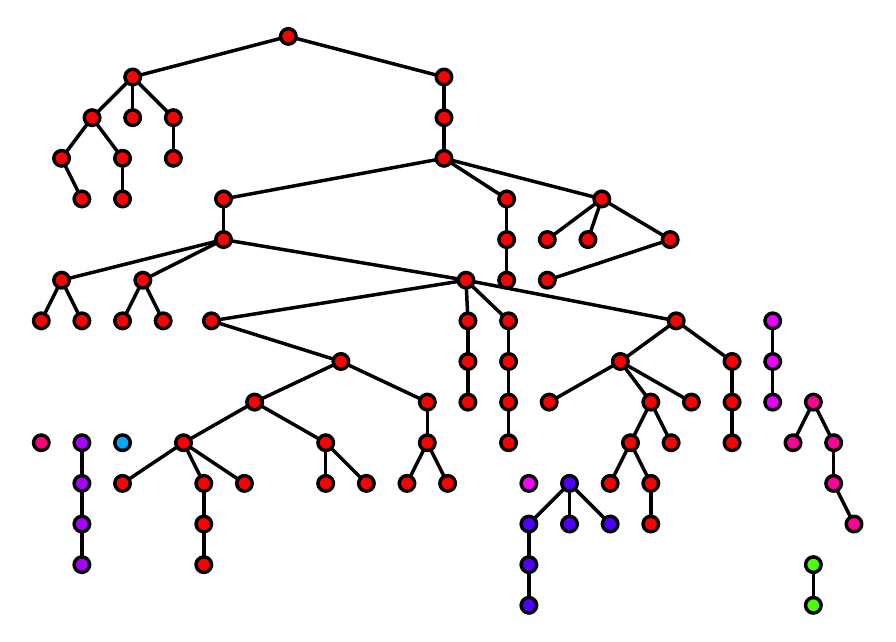}
   \end{subfigure}
   \begin{subfigure}[t]{0.32\linewidth}
      \includegraphics[width=\linewidth]{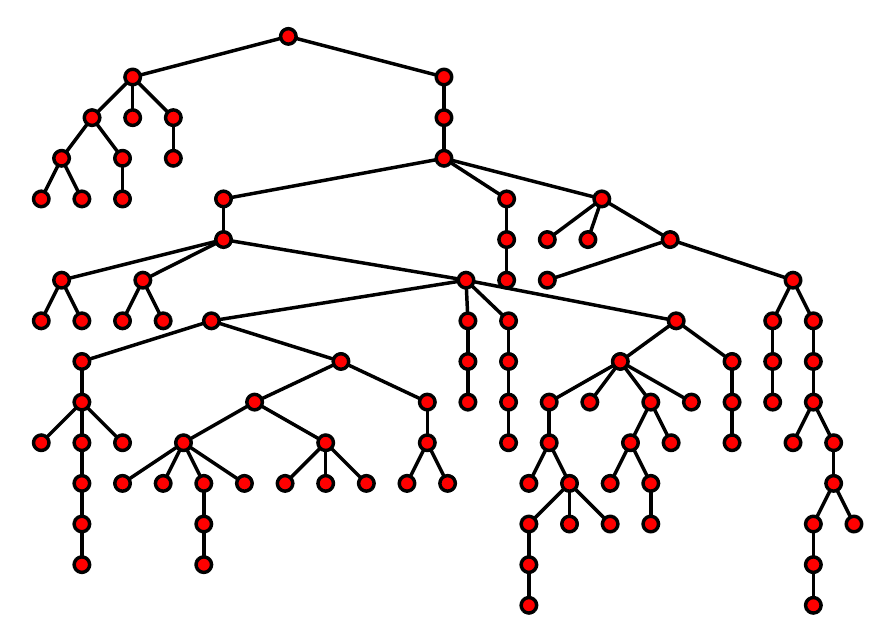}
   \end{subfigure}
   \caption{A few snapshots of the \emph{uncover process} applied to a random
   labeled tree of size 100. From left to right and top to bottom, there are
   12, 23, 34, \dots, 89, and 100 uncovered vertices in the figures, respectively.
   Vertex labels are omitted for the sake of readability, and vertices are colored
   per connected component.
   }
   \label{fig:uncover-example}
\end{figure}

\subparagraph{Overview.} Different aspects of the uncover process on labeled
trees are investigated in this work.
In Section~\ref{sec:edges}, we study the stochastic process
given by the number of uncovered edges. The corresponding main result,
a full characterization of the process and its limiting behavior, is given in
Theorem~\ref{thm:edge-process}.

Sections~\ref{sec:root-cluster} and~\ref{sec:largest-component} are both
concerned with cluster sizes, i.e., with the sizes of the connected components
that are created throughout the process. In particular, in Section~\ref{sec:root-cluster}
we shift our attention to rooted labeled trees, to study the behavior of the
component containing a fixed vertex. The expected size of the root cluster
is analyzed in Theorem~\ref{thm:R_Exp_labeled}. Furthermore, we show
that the number of rooted trees whose root cluster has a given size is given
by a rather simple enumeration formula---which, in turn, manifests
in Theorem~\ref{thm:R_Prob_labeled}, a characterization of the different
limiting distributions for the root cluster size depending on the number of
uncovered vertices.

Finally, in Section~\ref{sec:largest-component} we use the results
on the root cluster to draw conclusions regarding the size of the
largest cluster in the tree.

\subparagraph{Notation.} Throughout this work we use the notation $[n] = \{1,\dots,n\}$ and $[k, \ell] = \{k, k+1, \dots, \ell\}$ for discrete intervals, and $x^{\underline{j}} = x (x-1) \cdots (x-j+1)$ for the falling factorials. The floor and ceiling function are denoted by $\lfloor x \rfloor$ and $\lceil x \rceil$, respectively. Furthermore, we use $\mathcal{T}$ and $\Troot$ for the combinatorial classes of labeled trees and rooted labeled trees, respectively, and $\mathcal{T}_n$ and $\Troot_n$ for the classes of labeled and rooted labeled trees of size $n$, i.e., with $n$ vertices. Finally, we use $X_n \xrightarrow{d} X$ and $X_n \xrightarrow{p} X$ to denote convergence in distribution resp.~probability of a sequence of random variables (r.v.) $(X_{n})_{n\geq 0}$ to the r.v.\ $X$.

\section{The number of uncovered edges}\label{sec:edges}

In this section our main interest is the behavior of the number
of uncovered edges in the uncover process.
We begin by introducing a formal parameter for this quantity.

\begin{definition}
   Let $T$ be a labeled tree with vertex set $V(T) = [n]$. For $1\leq j \leq n$,
   we let $k_j(T) \coloneqq \| T[1, 2, \dots, j] \|$
   denote the number of edges in the subgraph of $T$ induced by the vertices
   in $[j]$. We refer to the sequence
   $(k_j(T))_{1\leq j\leq n}$ as the \emph{(edge) uncover sequence}.
\end{definition}

We start with a few simple observations.
First, for any labeled tree of order $n$ we have $k_1(T) = 0$,
as well as $k_n(T) = n-1$. Second, as any induced subgraph of a tree is
a forest, and as forests have the elementary property that the number of
edges together with the number of connected components gives the order of
the forest, we find that $j - k_j(T)$ is the number of connected
components after uncovering the first $j$ vertices of $T$.
Figure~\ref{fig:labeled-tree:example} illustrates the progression
of the number of edges and the number of connected components
for $1 \leq j\leq 1000$ in a randomly chosen labeled
tree on 1000 vertices.

\begin{figure}[ht]
   \centering
   \begin{subfigure}[t]{0.48\linewidth}
      \centering
      \includegraphics[width=\linewidth]{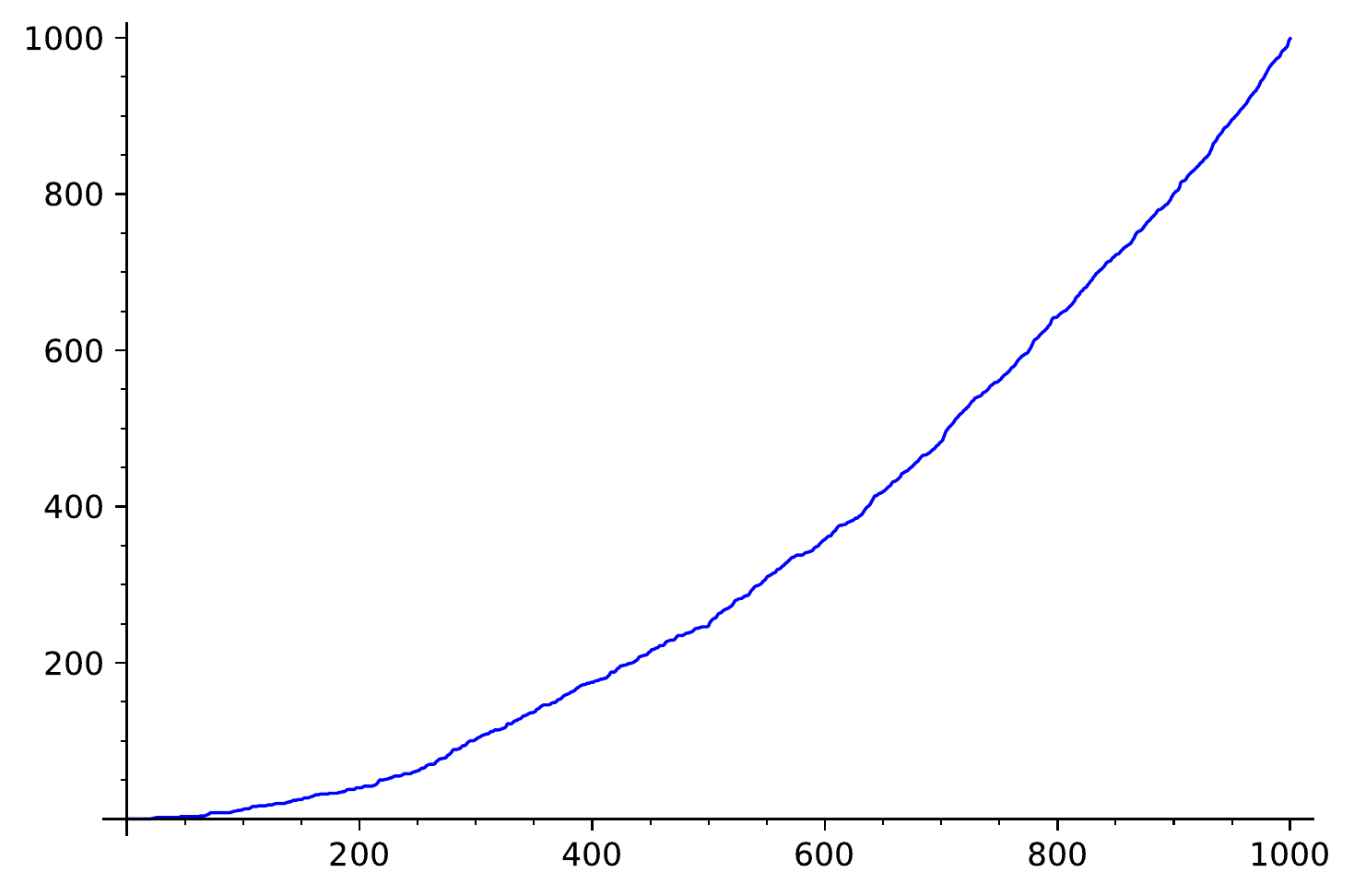}
   \end{subfigure}
   \hfill
   \begin{subfigure}[t]{0.48\linewidth}
      \centering
      \includegraphics[width=\linewidth]{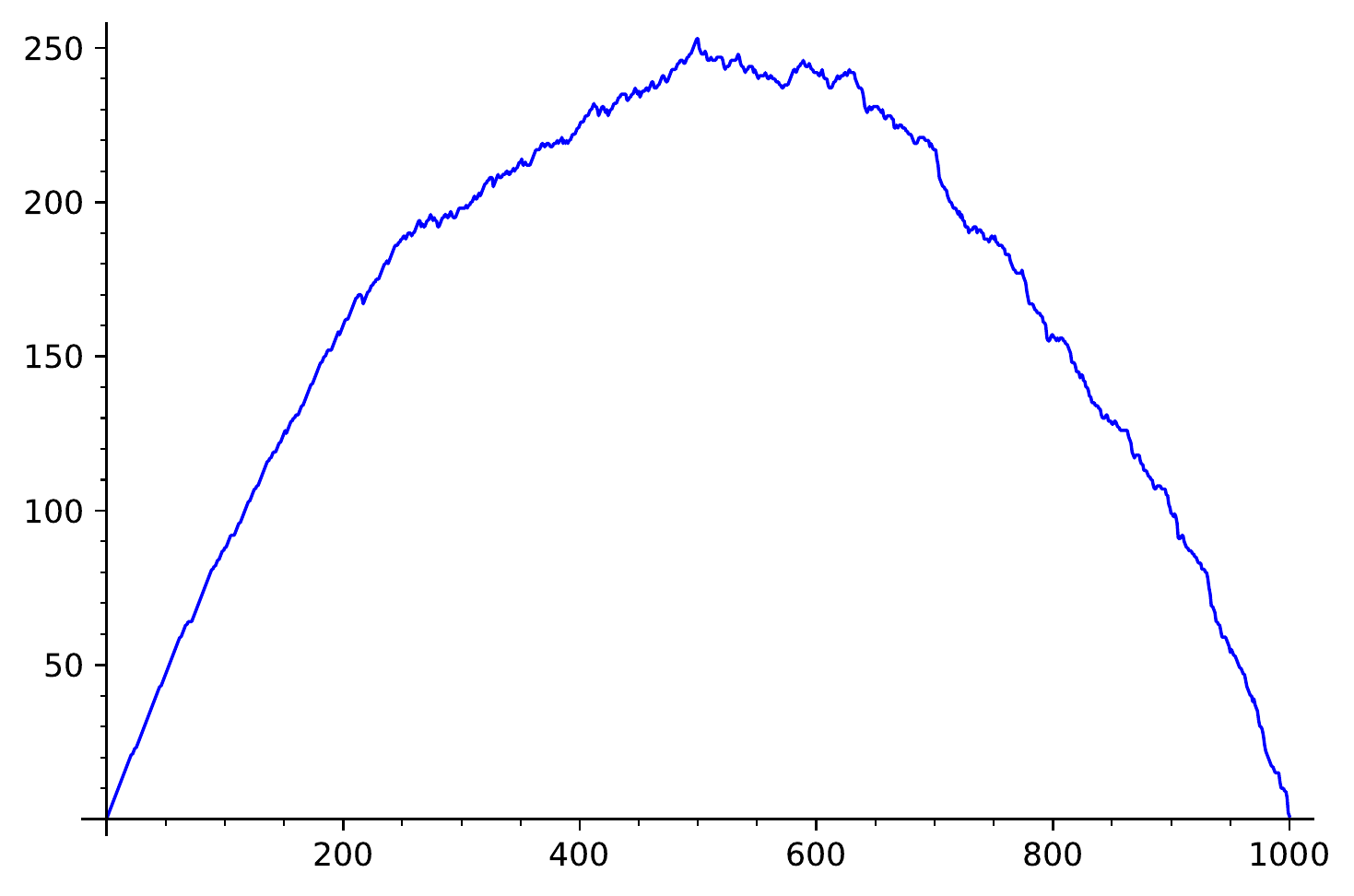}
   \end{subfigure}
   \caption{Progression of the number of edges (left) and the number
   of connected components (right) when sequentially uncovering a random labeled
   tree on 1000 vertices.}
   \label{fig:labeled-tree:example}
\end{figure}

Moreover, the fact that the first $j$ vertices of the
tree induce a forest also yields
the sharp bound $0 \leq k_j(T) \leq j-1$ for all $1\leq j \leq n-1$.
The lower bound is attained by the star with central vertex $n$,
and the upper bound is attained by the (linearly ordered) path.
We can also observe that as soon as $k_{n-1}(T) > 0$, the set of
edges added in the last uncover step is not determined uniquely.
Thus, the star with central vertex $n$ is the only tree that is
fully determined by its uncover sequence.

The following theorem provides explicit enumeration formulas for
the number of trees with a partially and fully specified uncover
sequence, respectively.

\begin{theorem}\label{thm:edges:explicit}
   Let $r$ be a fixed positive integer with $1\leq r < n-1$, let
   $j_1$, $j_2$, \ldots, $j_r$ be positive integers with $1 < j_1 < j_2 < \cdots < j_r < n$,
   and let $a_1$, $a_2$, \ldots, $a_r$ be a non-decreasing sequence of non-negative integers
   satisfying $a_i \leq j_i - 1$ for all $1\leq i \leq r$.
   Additionally, let $j_0 = 1$ and $a_0 = 0$.
   Then, the number of rooted labeled trees $T$ of order $n$ that satisfy $k_{j_i}(T) = a_i$ for
   all $1\leq i\leq r$ is given by
   \begin{multline}
      (n-j_r)^{j_r-a_r-1}n^{n-j_r-1} \\
      \times \prod_{i=1}^r \bigg( \sum_{h=0}^{a_i-a_{i-1}} \binom{j_{i-1}-a_{i-1}-1}{h}
      \binom{j_i-j_{i-1}}{a_i-a_{i-1}-h} (j_i-j_{i-1})^h j_i^{a_i-a_{i-1}-h} \bigg).
   \end{multline}
   Furthermore, there are
   \begin{equation}
      \prod_{i=1}^{n-2} \biggl(\binom{i - a_i - 1}{a_{i+1} - a_i - 1}(i+1) + \binom{i - a_i - 1}{a_{i+1} - a_i}\biggr)
   \end{equation}
   trees with a fully specified uncover sequence
   $(0, a_2, a_3, \dots, a_{n-1}, n-1)$.
\end{theorem}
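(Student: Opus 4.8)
My proposal is to study the uncover process step by step after attaching a weight to forests. For a forest $F$, let $w(F)$ be the product of the sizes of its connected components; in particular $w(T)=n$ for every tree $T$ on $[n]$, so the number of trees $T$ with the prescribed (partial) uncover sequence equals $\frac1n\sum_T w(T)$, the sum running over those trees. The engine of the proof is the following ``memorylessness'': if $F$ is a forest on $[m]$ with $c$ components and $F'$ runs over all forests on $[m+1]$ obtained from $F$ by adding vertex $m+1$ together with exactly $b$ new edges (so the $b$ new neighbours lie in $b$ distinct components, which then merge with $\{m+1\}$), then
\[
  \sum_{F'} w(F') \;=\; \bigl(\textstyle\binom{c}{b}+m\binom{c-1}{b-1}\bigr)w(F) \;=\; \bigl(\textstyle\binom{c-1}{b}+(m+1)\binom{c-1}{b-1}\bigr)w(F),
\]
the last equality being Pascal's rule. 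Indeed, if the chosen components have sizes $s_1,\dots,s_b$, there are $s_1\cdots s_b$ choices of attachment vertices, each producing the same $F'$ of weight $(1+s_1+\cdots+s_b)\prod_{\ell\notin S}s_\ell$, and $\sum_{|S|=b}\bigl(1+\sum_{\ell\in S}s_\ell\bigr)=\binom cb+\binom{c-1}{b-1}m$ since the component sizes sum to $m$. The decisive feature is that this factor depends only on $m$, $c$ and $b$, not on the shape of $F$.

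To exploit this, represent a partially built forest, weighted by $w$, by a polynomial in $x$ and $z$ in which $x$ marks the current number of components and $z$ marks the number of edges added since the last checkpoint. By the display, ``add vertex $m+1$'' acts on such a polynomial $f$ by $f(x)\mapsto \frac{x\,(x+(m+1)z)}{x+z}\,f(x+z)$, and an easy induction on $t$ shows that performing the additions of vertices $m_0+1,\dots,m_0+t$ transforms a monomial $x^{c_0}$ into
\[
  x\,\bigl(x+(m_0+t)z\bigr)^{t}\,\bigl(x+tz\bigr)^{c_0-1}.
\]
Since the total degree in $x,z$ grows by exactly $1$ at each step, prescribing the $z$-degree automatically fixes the $x$-degree.

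Now I assemble the count. Split the build of a tree on $[n]$ into the intervals $(j_{i-1},j_i]$ for $i=1,\dots,r$ — at checkpoint $j_i$ there are $c_i:=j_i-a_i$ components — followed by the final phase $(j_r,n]$. Starting from the monomial $x$ and applying the iteration formula on interval $i$ (with $m_0=j_{i-1}$, $c_0=c_{i-1}$, $t=j_i-j_{i-1}$, whence $m_0+t=j_i$) and then extracting the coefficient of $z^{a_i-a_{i-1}}$ — which forces the $x$-power to $c_i$ — multiplies the running scalar by exactly
\[
  \sum_{h}\binom{j_{i-1}-a_{i-1}-1}{h}\binom{j_i-j_{i-1}}{a_i-a_{i-1}-h}(j_i-j_{i-1})^{h}\,j_i^{\,a_i-a_{i-1}-h},
\]
i.e.\ the $i$-th bracketed factor in the statement, read off by expanding $(x+j_iz)^{j_i-j_{i-1}}(x+(j_i-j_{i-1})z)^{c_{i-1}-1}$. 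In the final phase the number of new edges is unconstrained, so we set $z=1$; requiring $T$ to be connected amounts to taking the coefficient of $x^1$ in $x(x+n)^{n-j_r}(x+n-j_r)^{c_r-1}$, which is $n^{\,n-j_r}(n-j_r)^{c_r-1}$. Multiplying through gives $\sum_T w(T)=n^{\,n-j_r}(n-j_r)^{\,j_r-a_r-1}\prod_{i=1}^r(\cdots)$, and division by $n$ yields the first formula. The second is the special case $r=n-2$, $j_i=i+1$: every interval is a single step, the final phase (the lone addition of vertex $n$) contributes $n^0\cdot 1^{c_{n-2}-1}=1$, and each bracketed factor collapses to its two surviving terms $\binom{i-a_i-1}{a_{i+1}-a_i-1}(i+1)+\binom{i-a_i-1}{a_{i+1}-a_i}$.

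The one genuinely nonroutine step is finding the weight $w(F)=\prod(\text{component sizes})$: it is precisely what makes the per-step $w$-sum independent of the forest, collapsing the uncover process to a product. After that, the transfer operator, the induction for its iterate, and the two coefficient extractions are bookkeeping; the only mild care needed is for the unconstrained final phase and for encoding the global connectivity requirement as ``coefficient of $x^1$''.
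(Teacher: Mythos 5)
Your proof is correct, and it takes a genuinely different route from the paper. The paper proves an auxiliary Lemma (Lemma~2.2) by appealing to a known factorization for the weighted spanning-tree polynomial (Martin--Reiner, Remmel--Williamson), assigning weight $x_iy_j$ to each edge $\{i,j\}$ with $i<j$ and then substituting to track only the $y$-variables; Theorem~2.1 then falls out by coefficient extraction from that product. Your argument is instead entirely self-contained: you introduce the weight $w(F)=\prod(\text{component sizes})$, observe the crucial ``memorylessness'' that the $w$-sum over the ways of attaching vertex $m+1$ with $b$ new edges depends only on $(m,c,b)$, encode this as a one-step transfer operator $f(x)\mapsto\frac{x(x+(m+1)z)}{x+z}f(x+z)$, solve its $t$-fold iterate in closed form as $x\,(x+(m_0+t)z)^t(x+tz)^{c_0-1}$, and then read off coefficients checkpoint by checkpoint (the homogeneity of the iterate in $(x,z)$ is what makes the interval-by-interval extraction legal, since fixing the $z$-degree pins down the $x$-degree). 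I verified the key identities: the memorylessness count $\binom{c}{b}+m\binom{c-1}{b-1}=\binom{c-1}{b}+(m+1)\binom{c-1}{b-1}$, the induction for the iterated operator, the coefficient extraction reproducing the bracketed factor $\sum_h\binom{j_{i-1}-a_{i-1}-1}{h}\binom{j_i-j_{i-1}}{a_i-a_{i-1}-h}(j_i-j_{i-1})^hj_i^{a_i-a_{i-1}-h}$, the terminal factor $n^{n-j_r}(n-j_r)^{j_r-a_r-1}$ from the constant term at $z=1$, and the specialization $r=n-2$, $j_i=i+1$ for the second formula. One small remark on presentation, not correctness: your final division by $n$ gives the count of \emph{unrooted} labeled trees, which is indeed what the displayed formula enumerates (summing over $a_1$ with $j_1=2$ gives $n^{n-2}$), so the statement's ``rooted labeled trees'' is a slip in the paper; your normalization is the consistent one. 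What your approach buys is a short, elementary, self-contained derivation that also \emph{explains} the product structure directly in terms of the step-by-step uncover process, whereas the paper's approach is quicker once the Martin--Reiner/Remmel--Williamson formula is taken as known, and additionally yields the multivariate generating function identity of Lemma~2.2 as an explicit intermediate object that the paper reuses later (e.g., in \eqref{eq:edge-uncover:pgf} and Lemma~\ref{lem:edge-process:joint}).
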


We first derive
a helpful auxiliary result, namely an explicit formula for the corresponding
(multivariate) generating function. The enumeration formula will then follow
by extracting the appropriate coefficients.

\begin{lemma}\label{lemma:edges-gf}
   In the setting of Theorem~\ref{thm:edges:explicit},
   the multivariate generating function for the increments
   in the edge uncover process is given by
   \begin{equation}\label{eq:lem:edges-gf}
      E_n(z_1, z_2, \ldots, z_r) = n^{n-j_r-1}
      \prod_{i=1}^{r} \Big( n-j_r + j_iz_i + \sum_{h=i+1}^r (j_h-j_{h-1})z_h \Big)^{j_i - j_{i-1}}.
   \end{equation}
   In other words, the coefficient of the monomial
   $z_1^{a_1} z_2^{a_2 - a_1} \dots z_r^{a_r - a_{r-1}}$ in the expansion of $E_n(z_1, \dots, z_r)$
   is the number of labeled trees $T$ of order $n$ with $k_{j_i}(T) = a_i$ for all $1\leq i \leq r$.
\end{lemma}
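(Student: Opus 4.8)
The plan is to encode rooted labeled trees of order $n$ via a classical bijection that keeps track of where each vertex attaches relative to the ``uncovering'' order, and then read off how each edge contributes to the uncover statistics $k_{j_i}$. The cleanest route is to build the tree one vertex at a time in the order of decreasing labels, or equivalently to use the structure of the tree as seen from the thresholds $j_0 = 1 < j_1 < \dots < j_r < n$: every vertex $v$ with label in the block $(j_{i-1}, j_i]$ is adjacent to a unique neighbour ``closer to the root'' in an appropriate sense, and the key observation is that an edge incident to $v$ increases $k_{j_i}$ (for the smallest relevant threshold) precisely when its other endpoint also has label $\le j_i$. So the generating function should factor as a product over the $n - 1$ non-root vertices, grouped into the $r+1$ blocks determined by consecutive thresholds, with the block $(j_{i-1}, j_i]$ contributing $j_i - j_{i-1}$ factors.

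Concretely, I would first fix the right combinatorial decomposition. A convenient one: order the vertices $n, n-1, \dots, 2, 1$ and note that in a rooted labeled tree, each vertex other than the root has exactly one neighbour, but it is cleaner to use the decomposition by thresholds directly. For a vertex $v$ in block $i$ (i.e.\ $j_{i-1} < v \le j_i$), its parent (in a suitable rooting, or simply: I claim one can set things up so that) lies in $[n]$, and the contribution to the monomial is: a factor $z_i$ if the parent has label $\le j_i$ and the parent is itself counted appropriately, a factor $z_h$ for $h > i$ if the parent lands in the later block $(j_{h-1}, j_h]$, wait — I need to be careful about the direction. The right statement, matching the formula, is that vertex $v$ in block $i$ has $j_i - j_{i-1}$ "copies", and each contributes a term $n - j_r + j_i z_i + \sum_{h=i+1}^r (j_h - j_{h-1}) z_h$: the summand $j_i z_i$ accounts for the $j_i$ possible "attachment points" whose presence makes the edge counted already at threshold $j_i$ (hence the variable $z_i$, since this edge is among the $k_{j_i}$ edges but was not among $k_{j_{i-1}}$), the summand $(j_h - j_{h-1}) z_h$ for $h > i$ accounts for attachment to one of the $j_h - j_{h-1}$ vertices in block $h$ (so the edge first gets counted at threshold $j_h$), and the constant $n - j_r$ accounts for attachment to one of the vertices with label $> j_r$ (so the edge is never counted by any $k_{j_i}$).

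The mechanism that produces exactly $j_i - j_{i-1}$ independent factors per block, each a first-degree polynomial with the stated coefficients, is the generalised Cayley / Prüfer-type formula: I would use the well-known fact that the generating function counting rooted forests on a vertex set $W$ where each tree is rooted at a prescribed set $R \subseteq W$ of roots, weighted by $\prod_{v \notin R} x_{\text{parent}(v)}$, equals $\bigl(\sum_{w \in W} x_w\bigr)^{|W| - |R| - 1} \cdot (\dots)$ — more precisely the matrix-tree / functional-equation argument showing a product structure. Applying this blockwise from the innermost block outward, with the "external" weight of an already-processed later block collapsed into a single variable, yields the product in \eqref{eq:lem:edges-gf}. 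The final sentence of the lemma (the coefficient interpretation) is then immediate by construction: the exponent of $z_i$ records the number of edges counted for the first time at threshold $j_i$, so the exponent of $z_i$ in a monomial is $a_i - a_{i-1}$ where $a_i = k_{j_i}(T)$, giving the monomial $z_1^{a_1} z_2^{a_2 - a_1} \cdots z_r^{a_r - a_{r-1}}$ (here using $a_0 = 0$ so the first exponent is $a_1$).

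The main obstacle I anticipate is getting the bookkeeping of "when is an edge first counted" exactly right and verifying that the blocks really do decouple into independent factors — in particular, justifying that a vertex in block $i$ may attach to \emph{any} of the $j_i$ vertices with smaller-or-equal label contributing $z_i$ (rather than only to vertices in its own block, which would change the coefficient of $z_i$), while attachments to later blocks and to the $>j_r$ region must be tracked separately. This requires a careful induction on $r$ (or on the blocks), peeling off the last block $(j_{r-1}, j_r]$ and the tail $(j_r, n]$ first: the tail contributes the prefactor $n^{n - j_r - 1}$ times a factor that merges into block $r$'s polynomial as the constant $n - j_r$, and then the induction hypothesis handles the remaining thresholds $j_1 < \dots < j_{r-1}$ with $n$ replaced appropriately. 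Once the decomposition is set up correctly, extracting the coefficient in the lemma is essentially by definition, and deriving Theorem \ref{thm:edges:explicit} from it is a routine (if lengthy) coefficient extraction using the Vandermonde-type convolution visible in the inner sum.
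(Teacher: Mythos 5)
You have correctly identified the target product structure, the role of the blocks, and the combinatorial meaning of the coefficients (the exponent of $z_i$ recording edges first counted at threshold $j_i$). But the key enumerative identity you reach for is the wrong one, and this is not a bookkeeping subtlety — it is the crux of the lemma.

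The weight you actually need on a tree $T$ is $\prod_{\{i<j\}\in E(T)} y_{j}$: each edge contributes the \emph{larger} of its two endpoint labels, independently of any rooting. The identity you invoke — the generating function for rooted forests weighted by $\prod_{v\notin R} x_{\mathrm{parent}(v)}$ — attaches to each edge the label of the \emph{parent} endpoint under a chosen rooting, and the parent of a vertex can very well have the smaller label, so this weight is genuinely different. Your own warning flags are symptoms of this mismatch: a per-vertex parent encoding produces $n-1$ linear factors (one per non-root vertex) with coefficient $j_i-1$ in front of $z_i$ (the parent cannot be $v$ itself), whereas the target formula has $n-2$ linear factors (plus a degree-one prefactor) with coefficient $j_i$. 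Resolving this discrepancy is exactly what the paper's cited identity accomplishes. The proof in the paper uses the Martin--Reiner / Remmel--Williamson weighted tree enumeration
\begin{equation*}
\sum_{|T|=n} \prod_{\{i<j\}\in E(T)} x_i y_j \;=\; x_1 y_n \prod_{j=2}^{n-1}\Big(\sum_{i=1}^n x_{\min(i,j)}\, y_{\max(i,j)}\Big),
\end{equation*}
into which one substitutes $x_\ell=1$ for all $\ell$, $y_\ell=z_i$ for $j_{i-1}<\ell\le j_i$, and $y_\ell=1$ for $\ell>j_r$; evaluating the inner sums block by block gives exactly \eqref{eq:lem:edges-gf}. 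Your proposed block-peeling induction cannot close without first establishing an identity of this edge-weighted form, which is substantially more than the rooted-forest formula you cite supplies. As written, there is a genuine gap.
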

\begin{remark}
   By specifying the integers $j_1$, $j_2$, \dots, $j_r$, the uncover process
   is effectively partitioned into intervals. This is also reflected by the quantities
   occurring in the product in~\eqref{eq:lem:edges-gf}: the difference
   $j_i - j_{i-1}$ corresponds to the number of vertices uncovered in the $i$-th
   interval, $j_i$ represents the number of vertices uncovered in total up to the
   $i$-th interval, and $n - j_r$ corresponds to the number of vertices uncovered
   in the last interval.
\end{remark}
\begin{proof}[Proof of Lemma~\ref{lemma:edges-gf}]
   We begin by observing that when the process uncovers the vertex with label~$j$, edges to
   all adjacent vertices whose label is less than $j$ are uncovered as well. To determine
   the generating function of the edge increments, we assign the weight $x_i y_j$
   to the edge connecting vertex $i$ and vertex $j$ with $i < j$, and then consider the
   generating function for the tree weight $w(T)$ (which is defined as the product of the
   edge weights); $E_n(z_1,\dots, z_r) = \sum_{|T| = n} w(T)$.

   Following Martin and Reiner~\cite[Theorem 4]{Martin-Reiner:2003:factorization}
   or Remmel and Williamson~\cite[Equation (8)]{Remmel-Williamson:2002:spanning-trees}, the
   generating function of the tree weights $w(T)$ has the explicit formula
   \begin{equation}\label{eq:edges:gf-general}
      \sum_{|T| = n} w(T) = x_1 y_n \prod_{j=2}^{n-1} \Big( \sum_{i=1}^n x_{\min(i,j)} y_{\max(i,j)} \Big).
   \end{equation}
   As initially observed, edges that are counted by $k_{j_i}(T)$ are precisely those that induce
   a factor $y_{\ell}$ for some
   $\ell \leq j_i$. Thus we make the following substitutions: $x_{\ell} = 1$ for
   all $\ell$, $y_{\ell} = z_i$ if
   and only if $j_{i-1} < \ell \leq j_i$ (where\footnote{Observe that $y_1$ does not occur,
   since at least one of the ends of every edge has a label greater than $1$.} $j_0 = 1$),
   and $y_{\ell} = 1$ if $\ell > j_r$. To deal with the sum over $y_{\max(i,j)}$, observe that
   we can rewrite it as
   \[
      \sum_{i=1}^{n} y_{\max(i, j)} = n - j_r + \sum_{i=1}^{j_1} y_{\max(i, j)} + \cdots
      + \sum_{i=j_{r-1}+1}^{j_r} y_{\max(i, j)}.
   \]
   In this form, the different values assumed by the sum when $j$ moves through the ranges
   $1 < j\leq j_1$, $j_1 < j \leq j_2$, etc. can be determined directly. For some $j$
   with $j_{i-1} < j \leq j_{i}$, the contribution to the product in~\eqref{eq:edges:gf-general} is
   \[ n-j_r + j_iz_i + \sum_{h=i+1}^r (j_h-j_{h-1})z_h, \]
   and for $j_r < j \leq n-1$ all $y$-variables are replaced by 1, so that the
   contribution to the product is a factor $n$. Putting both of these
   observations together shows that the right-hand side of~\eqref{eq:edges:gf-general}
   can be rewritten as the right-hand side of~\eqref{eq:lem:edges-gf} and thus proves
   the lemma.
\end{proof}

With an explicit formula for the generating function of edge increments in the uncover
process at our disposal, an explicit formula for the number of trees with given (partial)
uncover sequence follows as a simple consequence.

\begin{proof}[Proof of Theorem~\ref{thm:edges:explicit}]
   It remains to extract the coefficient of $z_1^{a_1}z_2^{a_2-a_1} \cdots z_r^{a_r-a_{r-1}}$,
   which is done step by step, starting with $z_1$.
\end{proof}

\subsection{A closer look at the stochastic process}\label{sec:edges:process}
The exceptionally nice formula for the generating function of edge increments
can be used to study the stochastic process that describes the number of
uncovered edges in more detail. Let the sequence of random variables
$(K_j^{(n)})_{1\leq j\leq n}$ be the discrete stochastic process
modeling the number of uncovered edges after uncovering the first $j$ vertices
in a random labeled tree of size $n$,
chosen uniformly at random. The expected number of uncovered edges can be
determined by a simple argument: with $j$ uncovered vertices,
$\binom{j}{2}$ of the $\binom{n}{2}$ possible positions for the edges
have been uncovered. As every position is, due to symmetry and the uniform
choice of the labeled tree, equally likely to hold one of the $n-1$
edges, we find
\begin{equation}\label{eq:edges:expectation}
   \E K_j^{(n)} = (n-1) \frac{\binom{j}{2}}{\binom{n}{2}} = \frac{j(j-1)}{n}.
\end{equation}
To motivate our investigations further, consider the illustrations in
Figure~\ref{fig:process:example}. The rescaled deviation from the mean
is reminiscent of a stochastic process known as Brownian bridge.

In order to define this process formally, recall first that the
Wiener process $(W(t))_{t\in[0,1]}$ is the unique stochastic
process that satisfies
\begin{itemize}
\item $W(0) = 0$,
\item $W$ has independent, stationary increments,
\item $W(t) \sim \mathcal{N}(0, t)$ for all $t > 0$, and
\item $t\mapsto W(t)$ is almost surely continuous,
\end{itemize}
see~\cite[Definition 21.8]{Klenke:2020:probability-theory-course}. A Brownian
bridge can then be defined by setting
\begin{equation}\label{eq:brownian-bridge}
   B(t) = (1-t) W(t/(1-t)),
\end{equation}
see e.g.~\cite[Exercise 3.10]{Revuz-Yor:1999:continuous-martingales-bm}.
The term ``bridge'' results from the fact that we have $B(0) = B(1) = 0$.

While the (normalized) deviation
from the mean looks like it might converge to a Brownian
bridge, we will prove that this is only \emph{almost} the case. The
following theorem characterizes the stochastic process.
For technical purposes, we set $K_0^{(n)} = 0$ and introduce the linearly interpolated process $(\tilde K_t^{(n)})_{t\in[0,1]}$,
where
\begin{equation}\label{eq:interpolation}
   \tilde K_t^{(n)} :=  (1 + \floor{tn} - tn) K_{\floor{tn}}^{(n)} + (tn - \floor{tn}) K_{\ceil{tn}}^{(n)},
\end{equation}
which by construction has continuous paths.

\begin{figure}[ht]
   \centering
   \begin{subfigure}[t]{0.48\linewidth}
      \centering
      \includegraphics[width=\linewidth]{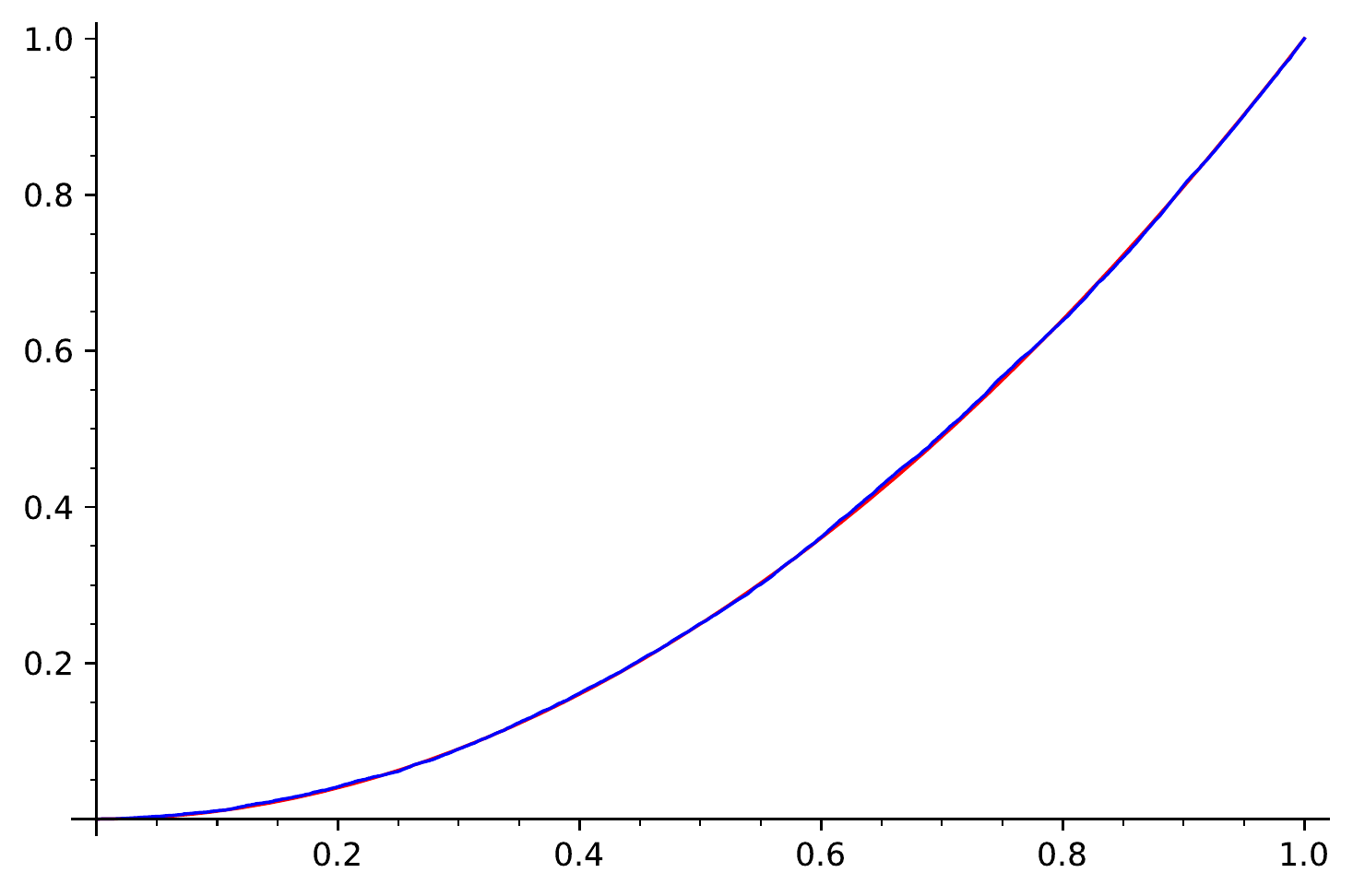}
   \end{subfigure}
   \hfill
   \begin{subfigure}[t]{0.48\linewidth}
      \centering
      \includegraphics[width=\linewidth]{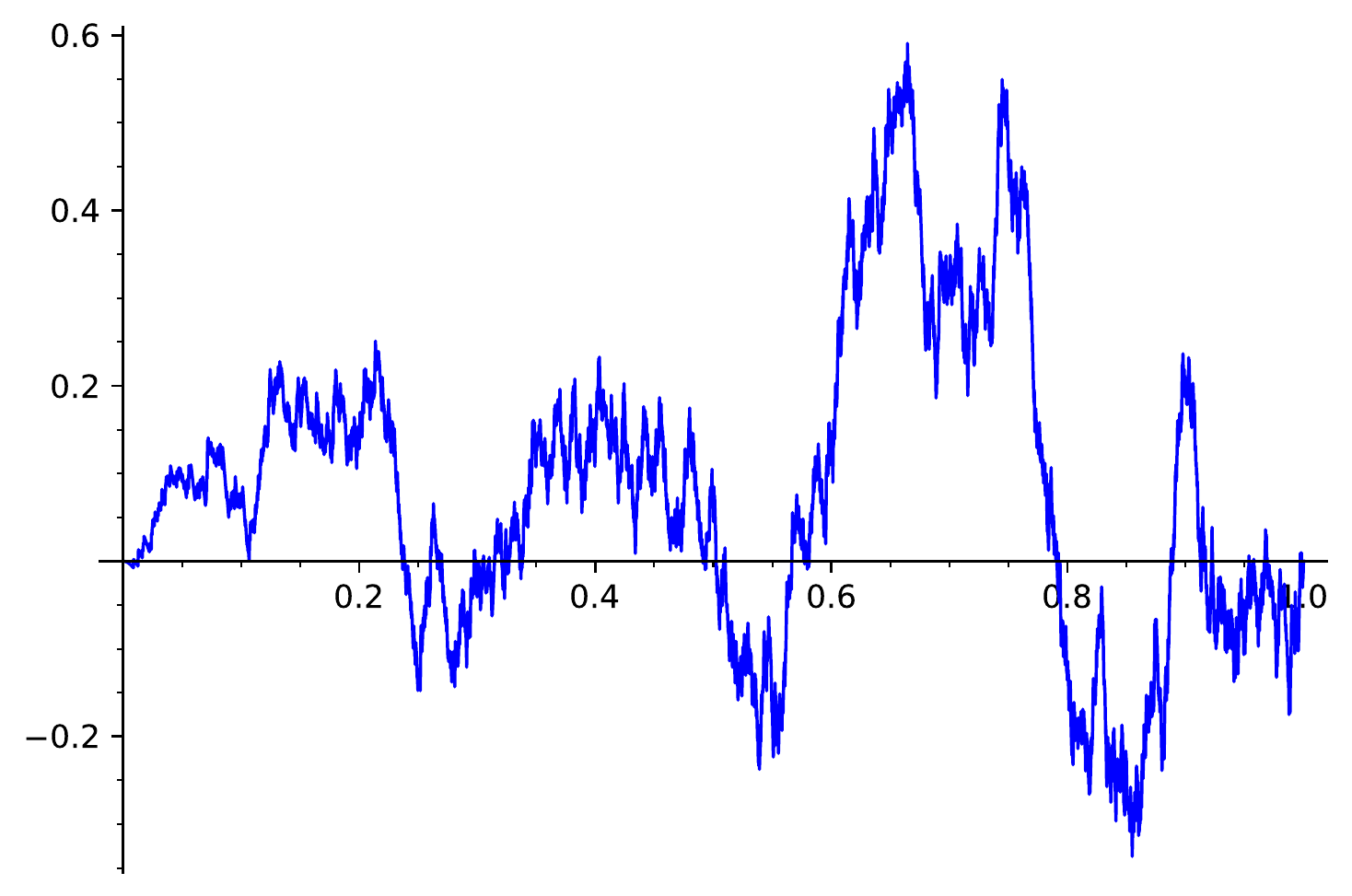}
   \end{subfigure}
   \caption{
      A path of the rescaled stochastic process $(K_{\floor{tn}}^{(n)}/(n-1))_{t\in[0,1]}$
      (left-hand side) and the corresponding (rescaled) deviation
      $\big(\frac{\tilde K_{t}^{(n)} - t^2 n}{\sqrt{n}}\big)_{t\in[0,1]}$ for
      a random labeled tree of size $n = 10000$.
   }
   \label{fig:process:example}
\end{figure}

\begin{theorem}\label{thm:edge-process}
   Let $(Z^{(n)}(t))_{t\in [0,1]}$ be the continuous stochastic process resulting from centering and rescaling the linearly interpolated process
   $(\tilde K_{t}^{(n)})_{t\in[0,1]}$ in the form of
   \[
      Z^{(n)}(t) \coloneqq \frac{\tilde K_{t}^{(n)} - t^2 n}{\sqrt{n}},
   \]
   and let $(W(t))_{t\in[0,1]}$ be the standard Wiener process. Then, for $n\to\infty$, the rescaled
   process converges weakly with respect to the $\sup$-norm on $C([0,1])$ to a limiting process $Z^{\infty}(t)$ that is given by
   \begin{equation}\label{eq:edge-process:limit}
      Z^{\infty}(t) = (1-t) W\big(t^2 / (1-t)\big).
   \end{equation}
   Furthermore, for $s$, $t\in[0,1]$ with $s < t$, the limiting process satisfies
   \begin{equation}\label{eq:edge-process:var-cov}
      \E Z^{\infty}(t) = 0,\quad \V Z^{\infty}(t) = t^2 (1 - t), \quad\text{ and }\quad
      \Cov(Z^{\infty}(s), Z^{\infty}(t)) = s^2 (1 - t).
   \end{equation}
\end{theorem}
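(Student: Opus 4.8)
The plan is to establish weak convergence in $C([0,1])$ (with the sup-norm topology) in the classical two steps: convergence of all finite-dimensional distributions towards those of $Z^{\infty}$, followed by tightness; Prokhorov's theorem together with an identification of the limit then finishes the argument. Everything is driven by Lemma~\ref{lemma:edges-gf}. Setting all $z_i=1$ there gives $E_n(1,\dots,1)=n^{n-2}$, so $E_n(z_1,\dots,z_r)/n^{n-2}$ is the joint probability generating function of the increment vector $(K_{j_1}^{(n)},K_{j_2}^{(n)}-K_{j_1}^{(n)},\dots,K_{j_r}^{(n)}-K_{j_{r-1}}^{(n)})$ for any $1=j_0<j_1<\dots<j_r<n$, namely $\prod_{i=1}^{r}\big(1-\tfrac{j_r}{n}+\tfrac{j_i}{n}z_i+\sum_{h=i+1}^{r}\tfrac{j_h-j_{h-1}}{n}z_h\big)^{j_i-j_{i-1}}$. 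In particular $K_j^{(n)}\sim\mathrm{Bin}(j-1,j/n)$, and matching this product with the generating function of a sum of independent indicators yields the convenient distributional description
\[
   \big(K_m^{(n)}\big)_{1\le m\le n-1}\ \overset{d}{=}\ \Big(\sum_{j=2}^{n-1}\mathbf{1}[R_j\le m]\Big)_{1\le m\le n-1},
\]
where $R_2,\dots,R_{n-1}$ are independent, $R_j$ supported on $\{j,j+1,\dots,n\}$ with $\P(R_j=j)=j/n$ and $\P(R_j=i)=1/n$ for $j<i\le n$. In particular each $K_m^{(n)}$ is a sum of independent indicators, and for $j_1<j_2$ the increment $K_{j_2}^{(n)}-K_{j_1}^{(n)}=\sum_{j}\mathbf{1}[j_1<R_j\le j_2]$ has the law $\mathrm{Bin}(j_1-1,\tfrac{j_2-j_1}{n})+\mathrm{Bin}(j_2-j_1,\tfrac{j_2}{n})$ (independent binomials). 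These facts feed both steps below.

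For the finite-dimensional distributions, fix $0\le t_1<\dots<t_m\le 1$ and put $j_i=\floor{t_i n}$; a value $t_i=1$ only produces the degenerate coordinate $Z^{(n)}(1)=-n^{-1/2}\to 0=Z^{\infty}(1)$. Using the description above together with the elementary facts $|\E K_{j_i}^{(n)}-t_i^2 n|=O(1)$ and $\tilde K_{t_i}^{(n)}-K_{j_i}^{(n)}=O_p(1)$, we get $(Z^{(n)}(t_i))_{1\le i\le m}=n^{-1/2}\sum_{j=2}^{n-1}V_j+o_p(1)$, where the independent centered vectors $V_j$ have coordinates $(V_j)_i=\mathbf{1}[R_j\le j_i]-\P(R_j\le j_i)$ and satisfy $\|V_j\|\le\sqrt m$. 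Since there are $O(n)$ summands, the Lyapunov condition is immediate, so the multivariate central limit theorem applies as soon as $n^{-1}\sum_j\Cov(V_j)$ converges. Using $\P(R_j\le j_i)=(j_i/n)\mathbf{1}[j\le j_i]$ one computes, for $a\le b$, $\sum_{j=2}^{n-1}\Cov((V_j)_a,(V_j)_b)=\Cov(K_{j_a}^{(n)},K_{j_b}^{(n)})=(j_a-1)\tfrac{j_a}{n}(1-\tfrac{j_b}{n})$, so $n^{-1}\sum_j\Cov(V_j)\to\Sigma$ with $\Sigma_{ab}=t_{\min(a,b)}^2(1-t_{\max(a,b)})$. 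On the other hand $Z^{\infty}$ is a linear image of the Wiener process, hence a centered Gaussian process, and since $u\mapsto u^2/(1-u)$ is increasing on $[0,1)$ one has, for $s\le t$, $\Cov(Z^{\infty}(s),Z^{\infty}(t))=(1-s)(1-t)\min\big(\tfrac{s^2}{1-s},\tfrac{t^2}{1-t}\big)=s^2(1-t)$, together with $\V Z^{\infty}(t)=t^2(1-t)$ and $\E Z^{\infty}(t)=0$; this is exactly \eqref{eq:edge-process:var-cov}. Hence $\mathcal{N}(0,\Sigma)$ is the law of $(Z^{\infty}(t_i))_i$, which is the claimed finite-dimensional convergence.

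It remains to prove tightness of $(Z^{(n)})_n$ in $C([0,1])$, for which I would verify the Kolmogorov--Chentsov moment bound $\E\big|Z^{(n)}(t)-Z^{(n)}(s)\big|^{4}\le C(t-s)^{2}$ uniformly in $n\ge 1$ and $s,t\in[0,1]$. For grid points $s=j_1/n<j_2/n=t$, the increment $K_{j_2}^{(n)}-K_{j_1}^{(n)}$ is a sum of at most $n$ independent Bernoulli variables of total variance $\sigma^{2}=(j_1-1)\tfrac{j_2-j_1}{n}(1-\tfrac{j_2-j_1}{n})+(j_2-j_1)\tfrac{j_2}{n}(1-\tfrac{j_2}{n})\le 2(j_2-j_1)$, with no blow-up as $j_2\to n$; since the centered summands are bounded by $1$, its fourth central moment is at most $\sigma^{2}+3\sigma^{4}$, and combining this with $\E(K_{j_2}^{(n)}-K_{j_1}^{(n)})=(t^{2}-s^{2})n-(t-s)$ gives the bound at grid points after dividing by $n^{2}$. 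The piecewise-linear definition of $\tilde K^{(n)}$ then transfers the estimate to arbitrary $s,t$: on sub-grid scales an increment of $Z^{(n)}$ is a multiple $\le 1$ of a single-step increment, while the mismatch between $(t^{2}-s^{2})n$ and its grid-rounded counterpart is $O(1)$ and hence harmless once $t-s\ge 1/n$. With $Z^{(n)}(0)=0$, tightness follows; Prokhorov's theorem and the finite-dimensional convergence then identify the weak limit as the centered Gaussian process with covariance $s^{2}(1-t)$, that is, $Z^{\infty}(t)=(1-t)W(t^{2}/(1-t))$, whose paths are continuous on $[0,1)$ and at $t=1$ because $(1-t)W(t^{2}/(1-t))=t^{2}\,W(u)/u\to 0$ as $u=t^{2}/(1-t)\to\infty$, so the limit lives in $C([0,1])$. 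The main obstacle is this tightness step: one must make the fourth-moment bound genuinely uniform --- in particular near the degenerate right endpoint $t=1$, where the variance $t^{2}(1-t)$ collapses --- and carefully reconcile the linearly interpolated process $\tilde K^{(n)}$ with the underlying grid process on the finest scales; the finite-dimensional convergence, by contrast, falls out almost immediately from Lemma~\ref{lemma:edges-gf}.
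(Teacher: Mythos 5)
Your proof is correct and follows the same two-step blueprint as the paper: convergence of finite-dimensional distributions plus tightness via the Kolmogorov--Chentsov fourth-moment bound, both powered by the explicit joint probability generating function from Lemma~\ref{lemma:edges-gf}. The genuine difference lies in how you read the PGF factorization. The paper decomposes the increment vector into a sum of $r$ independent (marginalized) multinomial random vectors $M_1,\dots,M_r$ and then applies the multivariate CLT for multinomials; you instead unravel the same product into a ``record''-type representation $K_m^{(n)}\overset{d}{=}\sum_{j=2}^{n-1}\mathbf{1}[R_j\le m]$ with independent $R_j$ supported on $\{j,\dots,n\}$, so that each $K_m^{(n)}$ is a sum of independent indicators and each increment $K_{j_2}^{(n)}-K_{j_1}^{(n)}$ is a sum of two independent binomials (the same two binomials the paper arrives at inside the proof of Lemma~\ref{lem:edge-process:paths-ok}). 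Your representation makes the covariance computation $\Cov(K_{j_a}^{(n)},K_{j_b}^{(n)})=(j_a-1)\tfrac{j_a}{n}(1-\tfrac{j_b}{n})$ completely mechanical, whereas the paper either transforms the multinomial covariance matrices by taking partial sums or falls back on the martingale $Y_j^{(n)}$. Your identification of the covariance of the claimed limit $(1-t)W(t^2/(1-t))$ via the monotonicity of $u\mapsto u^2/(1-u)$ is also clean and matches the paper's final verification. On tightness, the fourth-moment bound at grid points and its transfer to the linearly interpolated process are essentially the paper's Lemma~\ref{lem:edge-process:paths-ok}; you correctly observe that since $Z^{(n)}(0)=0$ deterministically, no separate argument for the initial distributions is needed, which makes the paper's Lemma~\ref{lem:edge-process:tight} (a stronger uniform supremum bound obtained from Doob's inequality applied to the martingale $Y_j^{(n)}$) superfluous for the convergence statement, though the paper keeps it as an independently interesting quantitative estimate. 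The concern you raise at the end about uniformity near $t=1$ is, in fact, harmless: your bound $\sigma^2\le 2(j_2-j_1)$ has no singularity as $j_2\to n$, exactly as in the paper's version, so the Kolmogorov--Chentsov estimate is genuinely uniform.
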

\begin{remark}
   While the limiting process $(Z^{\infty}(t))_{t\in[0,1]}$ is not a Brownian bridge
   (the corresponding variances and covariances as given
   in~\eqref{eq:edge-process:var-cov} do not match), it is closely related.
   Comparing the characterization of $Z^{\infty}(t)$
   in~\eqref{eq:edge-process:limit} to~\eqref{eq:brownian-bridge}, we see that the processes only
   differ by the square in the numerator of the argument of the Wiener process.
\end{remark}

Two main ingredients are required to prove this result
(cf.~\cite[Theorem 21.38]{Klenke:2020:probability-theory-course}):
the fact that the sequence of stochastic processes is \emph{tight}
on the one hand, and information
on the finite-dimensional joint distributions
of $(\tilde K_t^{(n)})_{t\in[0,1]}$ on the other hand.

By Prohorov's theorem (\cite[Theorem 13.29]{Klenke:2020:probability-theory-course}),
tightness is equivalent to the sequence being weakly relatively sequentially compact.
We prove that this is the case by checking Kolmogorov's criterion
\cite[Theorem 21.42]{Klenke:2020:probability-theory-course} for which we
have to verify that the family of initial distributions
$(\tilde Z^{(n)}(0))_{n\in \Z_{\geq 0}}$ is tight, and that the paths
of $(Z^{(n)}(t))_{t\in[0,1]}$ cannot change too fast.

While tightness of the initial distributions follows in a rather straightforward
way (given that our process is deterministic in the beginning), we can even derive
a much stronger, uniform bound. Let us begin by revisiting~\eqref{eq:lem:edges-gf}.
Given Cayley's well-known tree enumeration formula, the corresponding
probability generating function for the complete uncover sequence, i.e.,
when we choose our integer vector as $\mathbf{j} = (2, 3, \dots, n-1)$,
is
\begin{equation}\label{eq:edge-uncover:pgf}
P_n(z_2, \dots, z_{n-1}) =
\prod_{i=2}^{n-1} \Bigl(\frac{1}{n} + \frac{i}{n}z_i + \sum_{h=i+1}^{n-1}\frac{1}{n} z_h\Bigr).
\end{equation}
This suggests modeling the process with $n-2$ independent random variables,
each representing an edge increment\footnote{We explicitly model edge increments here
instead of edges, because with this approach we do not need to care about \emph{which}
edge is being uncovered. Our model explicitly only captures the behavior of the number of
uncovered edges.}. The factorization suggests that the
$j$-th increment (which corresponds to the factor with $i = j+1$) happens
with probability $(j+1)/n$ when the vertex
with label $j+1$ is uncovered, or with probability $1/n$ every time
any of the subsequent vertices are uncovered. This probabilistic point of
view can be used to construct a recursive characterization for
the number of uncovered edges, namely\footnote{We slightly abuse notation:
formally, we would need to introduce auxiliary variables that are distributed
according to the specified binomial and Bernoulli distributions.}
\begin{equation}\label{eq:edge-uncover:recursive}
   K_{j+1}^{(n)} = K_{j}^{(n)} + \operatorname{Ber}\Bigl(\frac{j+1}{n}\Bigr) + \operatorname{Bin}\Bigl(j - 1  - K_j^{(n)}, \frac{1}{n-j}\Bigr).
\end{equation}
The Bernoulli variable models the probability that the $j$-th edge increment is
added when uncovering the vertex with label $j+1$, and the binomial variable
models all of the remaining, not yet uncovered edge increments.

Now let us consider a centered and rescaled version of the process
$(K_j^{(n)})_{1\leq j\leq n}$ by defining%
\begin{equation}\label{eq:rescaled-martingale}
   Y_j^{(n)} \coloneqq \frac{K_j^{(n)} - \frac{j(j-1)}{n}}{n-j}.
\end{equation}
With the help of the recursive description in~\eqref{eq:edge-uncover:recursive}, we can
show that $(Y_j^{(n)})_{1\leq j\leq n-1}$ is a martingale by computing
\begin{align*}
   \E(Y_{j+1}^{(n)} | Y_j^{(n)})
   &= \frac{\E(K_{j+1}^{(n)} | K_j^{(n)})}{n-j-1} - \frac{j(j+1)}{n (n-j-1)}= \frac{K_j^{(n)} + \frac{j+1}{n} + \frac{j-1-K_j^{(n)}}{n-j}}{n-j-1} - \frac{j(j+1)}{n (n-j-1)}\\
   &= \frac{K_j^{(n)}}{n-j} - \frac{(j-1)j}{n(n-j)} = Y_j^{(n)}.
\end{align*}
We can also give an explicit expression for the variance of $Y_k^{(n)}$: recall that by~\eqref{eq:rescaled-martingale}, we have $\V Y_k^{(n)} = (n-k)^{-2} \V K_k^{(n)}$.
Then, with~\eqref{eq:edge-uncover:recursive} and the laws of total variance and total
expectation we find the recurrence
\[ \V K_{k+1}^{(n)} = \Bigl(1 - \frac{1}{n - k}\Bigr)^{2} \V K_k^{(n)} + \frac{(n-k-1)(2n-k-1)k}{(n-k)n^2}, \]
for $1 \leq k < n-1$ and $\V K_1^{(n)} = 0$. This allows us to conclude that
\begin{align}
   \V K_{k}^{(n)}
   &= \sum_{j=1}^{k-1} \Bigl(\frac{n-k}{n-j-1}\Bigr)^2 \frac{(n-j-1)(2n-j-1)j}{(n-j)n^2} = \frac{k(k-1)(n-k)}{n^2}, \label{eq:uncover-variance-explicit}
\end{align}
where the sum can be evaluated with the help of partial fractions and telescoping.

The uniform bound (that also implies tightness of $Z^{(n)}(t)$ for every fixed $t$)
can now be stated as follows.
\begin{lemma}\label{lem:edge-process:tight}
   For any real $C > 1$ and any positive integer $n$, the random variable
   $Z^{(n)}(t)$ satisfies the bound
   \begin{equation}\label{eq:edge-process:tight}
      \P(\sup_{t\in[0,1]} \abs{Z^{(n)}(t)} \geq C) \leq 4 (C-1)^{-2},
   \end{equation}
   so that for $C \to\infty$, the probability for the process to
   exceed $C$ in absolute value converges to 0 uniformly in terms of $n$.
\end{lemma}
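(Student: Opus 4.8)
The plan is to reduce the statement to a maximal inequality for the martingale $(Y_j^{(n)})_{1\le j\le n-1}$ introduced in~\eqref{eq:rescaled-martingale}, and then to establish that inequality by a dyadic decomposition in time combined with Doob's maximal inequality and the explicit variance formula~\eqref{eq:uncover-variance-explicit}. For the reduction, note first that on each subinterval $[j/n,(j+1)/n]$ the process $\tilde K^{(n)}$ is linear, so $Z^{(n)}(t)=\tilde K_t^{(n)}/\sqrt n-\sqrt n\,t^2$ is concave there; hence its supremum over $[0,1]$ exceeds $\max_{0\le j\le n}\abs{Z^{(n)}(j/n)}$ by at most $\tfrac14 n^{-3/2}$. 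Moreover, for $1\le j\le n-1$ definition~\eqref{eq:rescaled-martingale} gives $Z^{(n)}(j/n)=\bigl((n-j)Y_j^{(n)}-j/n\bigr)/\sqrt n$, so that $\abs{Z^{(n)}(j/n)}\le (n-j)\abs{Y_j^{(n)}}/\sqrt n+n^{-1/2}$, while $\abs{Z^{(n)}(0)}=0$ and $\abs{Z^{(n)}(1)}=n^{-1/2}$. Putting these together (for $n\ge 2$; the case $n=1$ is trivial since then $\sup_t\abs{Z^{(n)}(t)}=1<C$), it suffices to prove
\[
   \P\Bigl(\max_{1\le j\le n-1}(n-j)\,\abs{Y_j^{(n)}}\ge (C-1)\sqrt n\Bigr)\le \frac{4}{(C-1)^2}.
\]

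For $\ell\ge 0$ let $W_\ell:=\{\,j\in\{1,\dots,n-1\}: n2^{-\ell-1}<n-j\le n2^{-\ell}\,\}$; these windows partition $\{1,\dots,n-1\}$, and only finitely many are nonempty. On $W_\ell$ the weight satisfies $n-j\le n2^{-\ell}$, so with $m_\ell:=\max W_\ell$ we have $\max_{j\in W_\ell}(n-j)\abs{Y_j^{(n)}}\le n2^{-\ell}\max_{1\le i\le m_\ell}\abs{Y_i^{(n)}}$. Since $(Y_i^{(n)})_i$ is a martingale, $(Y_i^{(n)})^2$ is a submartingale, and Doob's maximal inequality yields $\P\bigl(\max_{1\le i\le m_\ell}\abs{Y_i^{(n)}}\ge\lambda\bigr)\le\lambda^{-2}\,\E\bigl[(Y_{m_\ell}^{(n)})^2\bigr]=\lambda^{-2}\,\V Y_{m_\ell}^{(n)}$ for every $\lambda>0$. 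By~\eqref{eq:uncover-variance-explicit} and~\eqref{eq:rescaled-martingale}, $\V Y_j^{(n)}=\tfrac{j(j-1)}{n^2(n-j)}<\tfrac{1}{n-j}$, and since $n-m_\ell>n2^{-\ell-1}$ this gives $\V Y_{m_\ell}^{(n)}<2^{\ell+1}/n$.

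Choosing $\lambda=(C-1)2^\ell/\sqrt n$, so that $n2^{-\ell}\lambda=(C-1)\sqrt n$, we obtain
\[
   \P\Bigl(\max_{j\in W_\ell}(n-j)\,\abs{Y_j^{(n)}}\ge (C-1)\sqrt n\Bigr)\le \frac{2^{\ell+1}/n}{(C-1)^2\,2^{2\ell}/n}=\frac{2}{(C-1)^2\,2^\ell}.
\]
A union bound over $\ell\ge 0$ and the geometric series $\sum_{\ell\ge0}2^{-\ell}=2$ then give the bound $4(C-1)^{-2}$, and combined with the reduction this proves~\eqref{eq:edge-process:tight}. (A martingale version of the H\'ajek--R\'enyi inequality applied to $(Y_j^{(n)})$ with the non-increasing weights $n-j$ would even yield the sharper constant $1$, but the dyadic argument is self-contained.)

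The probabilistic heart of the argument---Doob's inequality plus summing a geometric series---is routine once the setup is in place. The steps that need care are the bookkeeping in the reduction, where the discrepancies between $t^2n$, $j^2/n$ and $j(j-1)/n$ together with the interpolation error must all be controlled to order $n^{-1/2}$ so that they can be absorbed into the ``$-1$'' in $C-1$; and the choice of dyadic scale. It is essential to cut by $n-j\asymp n2^{-\ell}$ rather than by, say, $n-j\asymp 2^{-\ell}$: this is exactly the scale on which the weight $n-j$ and the variance of $Y^{(n)}$ balance against each other, and it is what makes the resulting series converge, producing the clean constant $4=2\cdot\sum_{\ell\ge0}2^{-\ell}$.
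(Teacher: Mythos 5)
Your proposal is correct and follows essentially the same route as the paper: reduce $\sup_t\abs{Z^{(n)}(t)}$ to $\max_{1\le j\le n-1}(n-j)\abs{Y_j^{(n)}}/\sqrt n$ (absorbing discretization and centering discrepancies into the ``$-1$'' of $C-1$), then partition $\{1,\dots,n-1\}$ into dyadic windows by the size of $n-j$ at scale $n2^{-\ell}$, apply Doob's maximal inequality on each window using the explicit variance $\V Y_j^{(n)}=\frac{j(j-1)}{n^2(n-j)}$, and sum the resulting geometric series to get $4(C-1)^{-2}$. The only cosmetic difference is the reduction step, where you use concavity of $Z^{(n)}$ on each cell to control the interpolation error, whereas the paper bounds the residual term $\frac{j+\eta^2}{n^{3/2}}$ directly; both yield the same target inequality and the same constant.
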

\begin{proof}
   In order to obtain this condition, we show first that it can be reduced to an inequality for the
   martingale from the previous section. To this end, let us write $tn = j + \eta$, with $j \in \Z$ and $\eta \in [0,1)$.
   A simple calculation shows that
   \begin{align*}
      Z^{(n)}(t) &= \frac{\tilde K_{t}^{(n)} - t^2 n}{\sqrt{n}} \\
      &= \frac{(1-\eta)K_j^{(n)} + \eta K_{j+1}^{(n)} - (j+\eta)^2/n}{\sqrt{n}} \\
      &= \frac{(1-\eta)(K_j^{(n)} - j(j-1)/n) + \eta(K_{j+1}^{(n)} - j(j+1)/n) - (j+\eta^2)/n}{\sqrt{n}} \\
      &= (1-\eta)\frac{K_j^{(n)} - j(j-1)/n}{\sqrt{n}} + \eta \frac{K_{j+1}^{(n)} - j(j+1)/n}{\sqrt{n}}
      - \frac{j+\eta^2}{n^{3/2}}.
   \end{align*}
   The final fraction is bounded by $1$, since $j+\eta^2 \leq j+\eta = tn \leq n$. It follows that
   \[
      \sup_{t\in[0,1]} \abs{Z^{(n)}(t)} \leq \sup_{0 \leq j \leq n}
        \abs[\Big]{\frac{K_j^{(n)} - j(j-1)/n}{\sqrt{n}}} + 1,
   \]
so
   \begin{align}
      \P\bigl(\sup_{t\in[0,1]} \abs{Z^{(n)}(t)} \geq C\bigr) &\leq
      \P\Bigl( \sup_{0 \leq j \leq n} \abs[\Big]{\frac{K_j^{(n)} - j(j-1)/n}{\sqrt{n}}} \geq C-1 \Bigr) \nonumber \\
      &= \P\Bigl( \sup_{1 \leq j \leq n-1} \abs[\Big]{\frac{Y_j^{(n)} (n - j)}{\sqrt{n}}} \geq C-1\Bigr). \label{eq:supineq}
   \end{align}
   Note here that we need not consider $j=0$ and $j=n$ in the supremum, since $K_j^{(n)} - j(j-1)/n = 0$ in either case.
   Since $(Y_j^{(n)})_{1\leq j\leq n-1}$ is a martingale, we can use Doob's
   $L^p$-inequality~\cite[Theorem 11.2]{Klenke:2020:probability-theory-course}.
   For any real $C > 0$ and any fixed integer $k$ with $1\leq k\leq n-1$, we have
   \[
      \P\bigl(\sup_{1\leq j\leq k} \abs{Y_j^{(n)}} \geq C \bigr) \leq \frac{\V Y_k^{(n)}}{C^2} = \frac{k (k-1)}{C^2 (n-k) n^2}.
   \]
   With this, we have all required prerequisites to prove the bound. We partition the interval over
   which the supremum is taken in~\eqref{eq:supineq},
   apply the martingale inequality, and then
   obtain the desired result after summing over all these upper bounds.
   For every integer $i > 0$, let $I_i^{(n)} \coloneqq [2^{-i} n, 2^{-i+1}n] \cap \Z$.
   We find
   \begin{align*}
      \P\Bigl(\sup_{n-j\in I_i^{(n)}} \abs[\Big]{\frac{Y_j^{(n)} (n - j)}{\sqrt{n}}} \geq C-1\Bigr)
      &\leq \P\bigl(\sup_{n-j\in I_i^{(n)}} \abs{Y_j^{(n)} 2^{-i+1} \sqrt{n}} \geq C-1\bigr)\\
      &= \P\Bigl(\sup_{n-j\in I_i^{(n)}} \abs{Y_j^{(n)}} \geq \frac{2^{i-1} (C-1)}{\sqrt{n}}\Bigr)\\
      &\leq \frac{n}{2^{2i-2} (C-1)^2} \V(Y_{n - \ceil{2^{-i} n}}^{(n)}) \\
      &\leq \frac{n}{2^{2i-2} (C-1)^2} \cdot \frac{2^i}{n} = \frac{4}{2^i (C-1)^2},
   \end{align*}
   where in the last inequality we bounded the variance as follows:
   \[
      \V(Y_{n - \ceil{2^{-i} n}}^{(n)}) = \frac{\V(K_{n - \ceil{2^{-i} n}}^{(n)})}{\ceil{2^{-i} n}^2} = \frac{(n - \ceil{2^{-i} n})(n - \ceil{2^{-i} n} - 1) \ceil{2^{-i} n}}{n^2 \ceil{2^{-i} n}^2} \leq \frac{2^i}{n}.
   \]
   Finally, the union bound together with the observation that $\sum_{i\geq 1} \frac{4}{2^i (C-1)^2} = 4 (C-1)^{-2}$
   yields the upper bound in~\eqref{eq:edge-process:tight} and therefore completes
   the proof.
\end{proof}

Let us next consider the behavior of the finite-dimensional distributions.

\begin{lemma}\label{lem:edge-process:joint}
   Let $r$ be a fixed positive integer, and let $\mathbf{t} = (t_1, ..., t_r)\in (0,1)^r$.
   Then for $n\to\infty$, the random vector
   \[
      \mathbf{K}_{\floor{\mathbf{t} n}}^{(n)} \coloneqq (K_{\floor{t_1 n}}^{(n)}, K_{\floor{t_2 n}}^{(n)}, \dots, K_{\floor{t_r n}}^{(n)})
   \]
   converges, after centering and rescaling, for $n\to\infty$ in distribution
   to a multivariate normal distribution,
   \[
      \frac{\mathbf{K}_{\floor{\mathbf{t} n}}^{(n)} - \E \mathbf{K}_{\floor{\mathbf{t} n}}^{(n)}}{\sqrt{n}} \xrightarrow[n\to\infty]{d} \mathcal{N}(\mathbf{0}, \Sigma),
   \]
   where the expectation vector $\E \mathbf{K}_{\floor{\mathbf{t} n}}^{(n)}$
   satisfies
   \begin{equation}\label{eq:joint:exp}
      \E \mathbf{K}_{\floor{\mathbf{t} n}}^{(n)} = n(t_1^2, t_2^2, \dots, t_r^2) + \Oh(1),
   \end{equation}
   and the entries of the variance-covariance matrix $\Sigma = (\sigma_{i,j})_{1\leq i,j\leq r}$
   are
   \begin{equation}\label{eq:joint:var}
      \sigma_{i,j} = \begin{cases}
         t_i^2 (1 - t_j) & \text{ if } i \leq j,\\
         t_j^2 (1 - t_i) & \text{ if } i > j.
      \end{cases}
   \end{equation}
\end{lemma}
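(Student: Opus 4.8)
The plan is to establish joint convergence of the normalized vector $\frac{\mathbf{K}_{\floor{\mathbf{t} n}}^{(n)} - \E \mathbf{K}_{\floor{\mathbf{t} n}}^{(n)}}{\sqrt{n}}$ to a multivariate Gaussian by working through the probability generating function supplied by Lemma~\ref{lemma:edges-gf}. First I would set $j_i = \floor{t_i n}$ for $i = 1, \dots, r$ (sorting so that $j_1 < j_2 < \cdots < j_r$, which is harmless since we may assume the $t_i$ distinct; equal coordinates are handled by a trivial limiting argument afterwards) and divide the formula~\eqref{eq:lem:edges-gf} by the total number of trees $n^{n-2}$ to obtain the probability generating function
\[
   p_n(z_1, \dots, z_r) = \frac{1}{n^{j_r - 1}} \prod_{i=1}^r \Bigl( n - j_r + j_i z_i + \sum_{h=i+1}^r (j_h - j_{h-1}) z_h \Bigr)^{j_i - j_{i-1}},
\]
whose coefficient of $z_1^{a_1} z_2^{a_2 - a_1} \cdots z_r^{a_r - a_{r-1}}$ gives $\P(k_{j_i}(T) = a_i \text{ for all } i)$. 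The key observation is that this is exactly the generating function of a sum of independent random variables: the $i$-th factor, raised to the power $j_i - j_{i-1}$, describes $j_i - j_{i-1}$ independent picks, each of which contributes to $z_i$ with probability $j_i / n_i$, to $z_h$ (for $h > i$) with probability $(j_h - j_{h-1})/n_i$, and to nothing with probability $(n - j_r)/n_i$, where $n_i$ is the obvious normalization. In other words, the vector $\mathbf{K}_{\floor{\mathbf{t} n}}^{(n)}$ has the same distribution as a sum of $j_r - 1$ independent (not identically distributed) bounded random vectors in $\Z_{\geq 0}^r$.

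Second, I would apply the standard Lindeberg–Feller (Lyapunov) central limit theorem for triangular arrays of independent vectors — or equivalently, take logarithms of the characteristic/moment generating function and expand — to the array of these $j_r - 1$ contributions. Since each summand is uniformly bounded (its coordinates lie in $\{0, 1\}$), the Lyapunov condition at any order greater than $2$ is immediate once we check that the variances sum to something of order $n$; this last point follows directly from the explicit variance formula~\eqref{eq:uncover-variance-explicit} already computed in the text, namely $\V K_k^{(n)} = k(k-1)(n-k)/n^2$, which for $k = \floor{t_i n}$ is asymptotic to $t_i^2(1 - t_i) n$. The expectation asymptotics~\eqref{eq:joint:exp} are precisely~\eqref{eq:edges:expectation} with $j = \floor{t_i n}$. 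Thus the only genuinely new computation is the asymptotics of the off-diagonal covariances $\Cov(K_{\floor{t_i n}}^{(n)}, K_{\floor{t_j n}}^{(n)})$ for $i < j$.

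For the covariance, I would again use the recursive description~\eqref{eq:edge-uncover:recursive}, or alternatively differentiate the generating function $p_n$ twice. Write $Y_k^{(n)} = (K_k^{(n)} - k(k-1)/n)/(n-k)$ for the martingale from the preceding subsection; then for $k \leq \ell$ the martingale property gives $\E(Y_\ell^{(n)} \mid \mathcal{F}_k) = Y_k^{(n)}$, hence $\Cov(Y_k^{(n)}, Y_\ell^{(n)}) = \V Y_k^{(n)} = k(k-1)/((n-k)n^2)$, and therefore $\Cov(K_k^{(n)}, K_\ell^{(n)}) = (n-k)(n-\ell) \cdot k(k-1)/((n-k)n^2) = (n-\ell)k(k-1)/n^2$. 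Setting $k = \floor{t_i n}$, $\ell = \floor{t_j n}$ with $t_i < t_j$ yields $\Cov(K_{\floor{t_i n}}^{(n)}, K_{\floor{t_j n}}^{(n)})/n \to t_i^2(1 - t_j)$, matching~\eqref{eq:joint:var}. Combining the marginal CLT from the array decomposition with the Cramér–Wold device — i.e., checking that every linear combination $\sum_i \lambda_i K_{\floor{t_i n}}^{(n)}$, which is itself a sum of independent bounded scalars, is asymptotically normal with the variance predicted by $\lambda^\top \Sigma \lambda$ — gives the claimed multivariate normal limit.

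The main obstacle I anticipate is bookkeeping rather than conceptual: carefully verifying that the product formula~\eqref{eq:lem:edges-gf} really does factor into independent contributions with the stated success probabilities (the nested structure of the sums $\sum_{h=i+1}^r$ makes the summands depend on all later coordinates, so one must be attentive that the array is genuinely \emph{independent} across the $j_r - 1$ picks even though each pick is a correlated vector in $\Z_{\geq 0}^r$), and confirming that the resulting covariance structure from the array matches~\eqref{eq:joint:var}. Once the independent-summands representation is in place, every remaining step is a routine application of a classical limit theorem together with the elementary asymptotics of~\eqref{eq:edges:expectation} and~\eqref{eq:uncover-variance-explicit}.
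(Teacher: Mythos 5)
Your proposal is correct and is essentially the same argument as the paper's, differing only in granularity. You decompose the vector of edge increments into $j_r - 1$ independent single-vertex ``picks'' (each a bounded vector in $\{0,1\}^r$) and apply Lindeberg--Feller, whereas the paper groups the same picks into $r$ blocks and recognizes each block as a multinomial random vector, to which the multinomial CLT applies directly; both are instances of the same independent-summand representation read off the product structure of the PGF~\eqref{eq:edges-pgf}. For the covariance you use exactly the martingale computation via $Y_j^{(n)}$ that the paper presents as its second (``alternatively'') route, so the verification of~\eqref{eq:joint:var} is word-for-word the paper's. The one subtlety you flag --- whether the nested sums $\sum_{h>i}$ threaten independence --- is indeed resolved as you suspect: the PGF factors, so the picks are mutually independent; the nesting only means each individual pick is itself a correlated vector in $\Z_{\geq 0}^r$ (specifically a random element of $\{0, e_i, e_{i+1}, \dots, e_r\}$), which is harmless for Lindeberg--Feller since each such vector is bounded.
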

\begin{proof}
   As a consequence of Lemma~\ref{lemma:edges-gf} and Cayley's well-known
   enumeration formula for labeled trees of size $n$, we find that the probability
   generating function of the number of edge increments after
   $1 < j_1 < j_2 < \dots < j_r < n$ steps, respectively, is given by
   \begin{align}
      P_n(z_1, z_2, \dots, z_r) &= \frac{E_n(z_1, z_2, \dots, z_r)}{n^{n-2}} \notag\\
      &= \prod_{i=1}^r \Bigl(1 - \frac{j_r}{n} + \frac{j_i}{n}z_i
      + \sum_{h=i+1}^r \frac{j_h - j_{h-1}}{n} z_h\Bigr)^{j_i - j_{i-1}}, \label{eq:edges-pgf}
   \end{align}
   where $j_0 = 1$ for the sake of convenience. Given that this probability generating
   function factors nicely, we could use a general result like the
   multidimensional quasi-power theorem (cf.~\cite{Heuberger-Kropf:2018:quasipower-multidim})
   to prove that the corresponding random vector
   \[
      \Delta_{\mathbf{j}}^{(n)} = (K_{j_1}^{(n)}, K_{j_2}^{(n)} - K_{j_1}^{(n)}, \dots,
      K_{j_r}^{(n)} - K_{j_{r-1}}^{(n)})
   \]
   converges, after suitable rescaling, to a multivariate Gaussian limiting
   distribution. However, there is a simple probabilistic argument:
   Observe that $\Delta_{\mathbf{j}}^{(n)}$
   can be seen as a marginal distribution of the sum of $r$ independent,
   multinomially distributed random vectors:
   write $t_i = j_i/n$ and
   consider $M_j \sim \operatorname{Multi}(j_i - j_{i-1}, \mathbf{p}_i)$ where
   \begin{equation}\label{eq:multinomial-prob}
   \mathbf{p}_i = (p_{i,0}, p_{i,1}, \dots, p_{i, r}) \in [0,1]^r \quad \text{such that} \quad
   p_{i,h} = \begin{cases}
      1 - t_r & \text{ if } h = 0,\\
      0 & \text{ if } 0 < h < i,\\
      t_i & \text{ if } h = i,\\
      t_{h} - t_{h-1} & \text{ otherwise.}
      \end{cases}
   \end{equation}
   By construction, the probability generating function of $M_i$ is then given
   by
   \[ \Bigl((1 - t_r)z_0 + t_i z_i + \sum_{h=i+1}^{r}
   (t_h - t_{h-1}) \Bigr)^{j_i - j_{i-1}}, \]
   so that the probability generating function of the sum
   $M_1 + \dots + M_r$ is a product that is very similar (and actually equal
   if we set $z_0 = 1$, which corresponds to marginalizing out the first component)
   to~\eqref{eq:edges-pgf}. In order to make the following arguments formally easier
   to read, and as the first component is not relevant for us at all,
   we slightly abuse notation and let $M_i$ for $1\leq i\leq r$ denote the
   corresponding marginalized multinomial distributions instead.

   For the sake of convenience, we make a slight
   adjustment: instead of fixing the integer vector $\mathbf{j} = (j_1, \dots, j_r)$,
   we fix $\mathbf{t} = (t_1, \dots, t_r)$ with $0 < t_1 < \dots < t_r < 1$
   and define $\mathbf{j} = \floor{\mathbf{t} n}$. Here, $n$ is considered to be
   sufficiently large so that the conditions for the corresponding integer vector,
   $1 < \floor{t_1 n} < \dots < \floor{t_r n} < n$, are still satisfied.

   By the multivariate central limit theorem, it is well-known that
   a multinomially distributed random vector $M\sim \operatorname{Multi}(n, \mathbf{p})$
   converges, for $n\to\infty$ and after appropriate scaling, in distribution to
   a multivariate normal distribution,
   \begin{equation}\label{eq:multinomial-gaussian}
      \frac{M - n\mathbf{p}}{\sqrt{n}} \xrightarrow{d} \mathcal{N}(\mathbf{0}, \diag(\mathbf{p}) - \mathbf{p}^{\top} \mathbf{p}).
   \end{equation}
   As a consequence, we find that
   \begin{align*}
      \frac{\Delta_{\floor{n\mathbf{t}}}^{(n)} - \E \Delta_{\floor{n\mathbf{t}}}^{(n)}}{\sqrt{n}}
      &= \frac{(M_1 + \dots + M_r) - \E(M_1 + \dots + M_r)}{\sqrt{n}}   \\
      &= (\sqrt{t_1} + \Oh(n^{-1})) \frac{M_1 - \E M_1}{\sqrt{\floor{t_1 n}}} + \cdots \\
      &\qquad + (\sqrt{t_r - t_{r-1}} + \Oh(n^{-1}))\frac{M_r - \E M_r}{\sqrt{\floor{t_r n} - \floor{t_{r-1} n}}}\\
      & \xrightarrow[n\to\infty]{d} \sqrt{t_1}\mathcal{N}(\mathbf{0}, \Sigma_1) + \dots
      + \sqrt{t_r - t_{r-1}} \mathcal{N}(\mathbf{0}, \Sigma_r)\\
      &= \mathcal{N}(\mathbf{0}, t_1\Sigma_1 + \dots + (t_r - t_{r-1})\Sigma_r),
   \end{align*}
   where the variance-covariance matrices are given by
   \[
      \Sigma_j = \diag(\mathbf{p}_j) - \mathbf{p}_j^{\top} \mathbf{p}_j.
   \]
   By a straightforward (linear) transformation consisting of taking partial sums,
   the random vector of increments $\Delta_{\floor{\mathbf{t}n}}^{(n)}$
   can be transformed into $\mathbf{K}_{\floor{\mathbf{t}n}}^{(n)}$. This
   proves that $\mathbf{K}_{\floor{\mathbf{t}n}}^{(n)}$ converges, after
   centering and rescaling, to a multivariate normal distribution.

   The entries of the corresponding variance-covariance matrix can either be
   determined mechanically from the entries of $t_1 \Sigma_1 + \dots + (t_r - t_{r-1}) \Sigma_r$
   by taking the partial summation into account, or alternatively,
   our observations concerning the martingale $Y_j^{(n)}$ can be used.
   In particular, using~\eqref{eq:rescaled-martingale}, we find, for fixed $s,t\in [0,1]$
   with $s < t$, that
   \begin{align*}
      \Cov\Bigl(\frac{K_{\floor{sn}}^{(n)} - \E K_{\floor{sn}}^{(n)}}{\sqrt{n}}, \frac{K_{\floor{tn}}^{(n)} - \E K_{\floor{tn}}^{(n)}}{\sqrt{n}}\Bigr)
         &= \frac{(n - \floor{tn})(n - \floor{sn})}{n} \E(Y_{\floor{sn}}^{(n)} Y_{\floor{tn}}^{(n)})\\
         &= (n(1-t)(1-s) + \Oh(1))\E({Y_{\floor{sn}}^{(n)}}^2)\\
         &= s^2 (1 - t) + \Oh(n^{-1}),
   \end{align*}
   where we made use of the martingale property, and the fact that the second moment
   of $Y_j^{(n)}$ is equal to the variance $n^{-2} j (j-1) / (n-j)$.
   Ultimately, this verifies~\eqref{eq:joint:var} and thus completes the proof.
\end{proof}

The last remaining piece required to prove that the sequence of processes
$(Z^{(n)})_{n\geq 1}$ is tight is a bound on the growth of the corresponding
paths.

\begin{lemma}\label{lem:edge-process:paths-ok}
   There is a constant $\lambda$ such that the following inequality holds for all $s, t\in [0,1]$ and all $n\in\mathbb{Z}_{> 0}$:
   \begin{equation}\label{eq:edge-process:paths-ok}
      \E \bigl(\bigl|Z^{(n)}(t) - Z^{(n)}(s)\bigr|^4 \bigr) \leq \lambda\, \abs{t-s}^2.
   \end{equation}
\end{lemma}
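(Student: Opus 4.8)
\subparagraph{Proof plan.}
The plan is to reduce the estimate~\eqref{eq:edge-process:paths-ok} to a sharp fourth‑moment bound for the increments of the centered and rescaled discrete process
\[
   \hat K_j^{(n)} \coloneqq \frac{K_j^{(n)} - \E K_j^{(n)}}{\sqrt n} = \frac{K_j^{(n)} - j(j-1)/n}{\sqrt n}, \qquad 0 \le j \le n .
\]
The key input is the product form of the probability generating function in~\eqref{eq:edges-pgf}: specializing it to $r = 2$ with $(j_1,j_2) = (a,b)$ and then setting $z_1 = 1$ shows that, for $2 \le a < b \le n-1$, the increment $K_b^{(n)} - K_a^{(n)}$ has probability generating function
\[
   \Bigl(1 - \tfrac{b-a}{n} + \tfrac{b-a}{n}\,z\Bigr)^{a-1}\Bigl(1 - \tfrac bn + \tfrac bn\,z\Bigr)^{b-a},
\]
so that $K_b^{(n)} - K_a^{(n)}$ is distributed as a sum of two \emph{independent} binomial random variables, $\operatorname{Bin}\bigl(a-1,\tfrac{b-a}{n}\bigr)$ and $\operatorname{Bin}\bigl(b-a,\tfrac bn\bigr)$; the boundary cases $a \in \{0,1\}$ (where $K_a^{(n)} = 0$) and $b = n$ (where $K_n^{(n)} = n-1$) are checked directly, and in particular $K_b^{(n)} \sim \operatorname{Bin}\bigl(b-1,\tfrac bn\bigr)$. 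Both binomials have mean, hence variance, at most $b-a$, and since the centered fourth moment of $\operatorname{Bin}(m,p)$ equals $mpq\bigl(1 + 3(m-2)pq\bigr) \le \V + 3\V^2$, expanding $\E(U+V)^4 = \E U^4 + 6\,\E U^2\,\E V^2 + \E V^4$ for independent centered $U,V$ yields, for all integers $0 \le a < b \le n$,
\[
   \E\Bigl[\bigl(\hat K_b^{(n)} - \hat K_a^{(n)}\bigr)^4\Bigr]
   = \frac{\E\bigl[(K_b^{(n)} - K_a^{(n)} - \E(K_b^{(n)} - K_a^{(n)}))^4\bigr]}{n^2}
   \le \frac{14\,(b-a)^2}{n^2} .
\]

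Next I would separate the deterministic part of $Z^{(n)}$. The computation already carried out in the proof of Lemma~\ref{lem:edge-process:tight} shows that, with $j = \floor{tn}$ and $\eta = tn - j$,
\[
   Z^{(n)}(t) = (1-\eta)\hat K_j^{(n)} + \eta\,\hat K_{j+1}^{(n)} - \frac{j+\eta^2}{n^{3/2}} = \tilde Z^{(n)}(t) + D^{(n)}(t),
\]
where $\tilde Z^{(n)}(t) \coloneqq (1-\eta)\hat K_j^{(n)} + \eta\,\hat K_{j+1}^{(n)}$ is precisely the linear interpolation of $(\hat K_j^{(n)})_{0\le j\le n}$ at the points $j/n$, and $D^{(n)}(t) \coloneqq -(j+\eta^2)/n^{3/2}$ is deterministic. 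Since $x \mapsto \floor x + (x-\floor x)^2$ is $2$‑Lipschitz, $\abs{D^{(n)}(t) - D^{(n)}(s)} \le 2n\abs{t-s}/n^{3/2} \le 2\abs{t-s}$, so $D^{(n)}$ contributes at most $16\abs{t-s}^4 \le 16\abs{t-s}^2$ to $\E\abs{Z^{(n)}(t) - Z^{(n)}(s)}^4$; by $(x+y)^4 \le 8(x^4+y^4)$ it therefore suffices to bound $\E\abs{\tilde Z^{(n)}(t) - \tilde Z^{(n)}(s)}^4$ by $\Oh(\abs{t-s}^2)$, uniformly in $n$.

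For this I would distinguish whether $\abs{t-s}$ is large or small relative to the mesh size. If $\abs{t-s} \ge 1/n$, put $a = \floor{sn}$ and $b = \ceil{tn}$, so that $a < b$ and $b - a \le n\abs{t-s} + 2 \le 3n\abs{t-s}$, and telescope
\[
   \tilde Z^{(n)}(t) - \tilde Z^{(n)}(s)
   = \bigl(\tilde Z^{(n)}(t) - \hat K_b^{(n)}\bigr) + \bigl(\hat K_b^{(n)} - \hat K_a^{(n)}\bigr) + \bigl(\hat K_a^{(n)} - \tilde Z^{(n)}(s)\bigr) .
\]
Because $\tilde Z^{(n)}(s)$ is a convex combination of $\hat K_a^{(n)}$ and $\hat K_{a+1}^{(n)}$ (and $\tilde Z^{(n)}(t)$ of $\hat K_{b-1}^{(n)}$ and $\hat K_b^{(n)}$), the two outer terms are bounded in absolute value by the single‑step increments $\abs{\hat K_{a+1}^{(n)} - \hat K_a^{(n)}}$ and $\abs{\hat K_b^{(n)} - \hat K_{b-1}^{(n)}}$; applying $(x+y+z)^4 \le 27(x^4+y^4+z^4)$ together with the fourth‑moment estimate above (with $b-a \le 3n\abs{t-s}$ for the middle term, and using $1/n^2 \le \abs{t-s}^2$ for the outer ones) gives $\E\abs{\tilde Z^{(n)}(t) - \tilde Z^{(n)}(s)}^4 = \Oh(\abs{t-s}^2)$. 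If $\abs{t-s} < 1/n$, then $s$ and $t$ lie in at most two consecutive mesh cells, and since $\tilde Z^{(n)}$ is piecewise linear with slope $n(\hat K_{j+1}^{(n)} - \hat K_j^{(n)})$ on the $j$‑th cell, one has, with $\alpha = \floor{sn}$ (and the obvious conventions at the right endpoint),
\[
   \abs{\tilde Z^{(n)}(t) - \tilde Z^{(n)}(s)} \le n\abs{t-s}\Bigl(\abs{\hat K_{\alpha+1}^{(n)} - \hat K_\alpha^{(n)}} + \abs{\hat K_{\alpha+2}^{(n)} - \hat K_{\alpha+1}^{(n)}}\Bigr),
\]
so that $\E\abs{\tilde Z^{(n)}(t) - \tilde Z^{(n)}(s)}^4 = \Oh\bigl(n^4\abs{t-s}^4/n^2\bigr) = \Oh(\abs{t-s}^2)$ because $n\abs{t-s} < 1$. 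Taking $\lambda$ to be the larger of the two resulting constants proves~\eqref{eq:edge-process:paths-ok}.

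The substantive step is the first one: recognizing that the factorization of the probability generating function turns $K_b^{(n)} - K_a^{(n)}$ into a sum of just \emph{two} independent binomials, which is exactly what makes its fourth central moment $\Oh((b-a)^2)$. The variance of $\hat K_b^{(n)} - \hat K_a^{(n)}$ alone is only of order $\abs{t-s}$, so a second‑moment version of~\eqref{eq:edge-process:paths-ok} would not suffice, which is why the fourth moment is needed. Everything after the first step is elementary; the only mildly delicate point is the bookkeeping for the interpolated process near the mesh points $j/n$ and at the endpoints $t \in \{0,1\}$, where some terms of the telescoping sum, or some of the relevant cells, degenerate.
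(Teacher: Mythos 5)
Your proof is correct and follows essentially the same approach as the paper: both arguments hinge on reading off from the factored probability generating function~\eqref{eq:edges-pgf} that $K_b^{(n)} - K_a^{(n)}$ is a sum of two independent binomials whose centered fourth moment is $\Oh\bigl((b-a)^2\bigr)$, and then propagating this bound through the piecewise-linear interpolation by an elementary telescoping argument. The only differences are organizational --- you center at the exact mean $j(j-1)/n$ and split off the remaining deterministic discrepancy $-(j+\eta^2)/n^{3/2}$ as a Lipschitz function of $t$, which eliminates the paper's third (constant) summand and yields marginally better constants, and you partition the cases by whether $\abs{t-s}$ exceeds the mesh width $1/n$ rather than by whether $tn$, $sn$ are integers.
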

\begin{proof}
Let us first consider the case where both $sn = \ell$ and $tn = m$ are integers. Assume that $\ell > m$. We can follow the argument in the proof of Lemma~\ref{lem:edge-process:joint} to see that the random variable $K_{\ell}^{(n)} - K_{m}^{(n)}$ is distributed like the last component in the sum of two independent multinomial distributions, which gives us
\[ K_{\ell}^{(n)} - K_{m}^{(n)} \sim \operatorname{Bin} \Big(m - 1, \frac{\ell-m}{n} \Big) + \operatorname{Bin} \Big(\ell-m, \frac{\ell}{n} \Big).\]
Let us write $\operatorname{Bin}^*(n,p)$ for a centered binomial distribution, i.e., a binomial distribution $\operatorname{Bin}(n,p)$ with the mean $np$ subtracted. Then we have
\begin{equation}\label{eq:KlKm}
K_{\ell}^{(n)} - K_{m}^{(n)} - \frac{\ell^2-m^2}n \sim \operatorname{Bin}^* \Big(m - 1, \frac{\ell-m}{n} \Big) + \operatorname{Bin}^* \Big(\ell-m, \frac{\ell}{n} \Big) - \frac{\ell-m}{n}.
\end{equation}
The fourth moment of a $\operatorname{Bin}^*(n,p)$-distributed random variable, which is the fourth centered moment of a $\operatorname{Bin}(n,p)$-distributed random variable, is
\[np(1-p)\big(1+(3n-6)p(1-p)\big) \leq np(1+3np).\]
Note that for the $\operatorname{Bin}^*$-variables in~\eqref{eq:KlKm}, we have $\frac{(m-1)(\ell-m)}{n} \leq \ell-m$ and $\frac{(\ell-m)\ell}{n} \leq \ell-m$. Thus the fourth moments of the two centered binomial random variables are bounded above by
\[(\ell-m)(1+3(\ell-m)) \leq 4(\ell-m)^2.\]
So $K_{\ell}^{(n)} - K_{m}^{(n)} - \frac{\ell^2-m^2}n$ is the sum of three random variables (the third one actually constant) whose fourth moments are bounded above by $4(\ell-m)^2$, $4(\ell-m)^2$ and $(\ell-m)^4n^{-4} \leq (\ell-m)^2$ respectively. Applying the inequality $\E((X+Y+Z)^4) \leq 27(\E(X^4) + \E(Y^4) + \E(Z^4))$, which is a simple consequence of Jensen's inequality, we get
\[\E \Big( K_{\ell}^{(n)} - K_{m}^{(n)} - \frac{\ell^2-m^2}n \Big)^4 \leq 27 \cdot (4+4+1)(\ell-m)^2 = 243(\ell-m)^2.\]
So if $tn = \ell$ and $sn = m$ are integers, we have
\begin{align*}
\E \bigl(\bigl|Z^{(n)}(t) - Z^{(n)}(s)\bigr|^4 \bigr) &= \E \Big( \frac{K_{\ell}^{(n)} - \frac{\ell^2}{n}}{\sqrt{n}} - \frac{K_{m}^{(n)} - \frac{m^2}{n}}{\sqrt{n}} \Big)^4 \\
&\leq \frac{243(\ell-m)^2}{n^2} = 243(t-s)^2.
\end{align*}
Second, consider the case that $tn$ and $sn$ lie between two consecutive integers $m$ and $m+1$: $\tilde s n = m \leq sn \leq tn \leq m+1 = \tilde t n$. In this case, we can express the difference $Z^{(n)}(t) - Z^{(n)}(s)$ (using the linear interpolation in the definition of $\tilde K_t^{(n)}$) as
\[Z^{(n)}(t) - Z^{(n)}(s) = (t-s)n \bigl( Z^{(n)}(\tilde t) - Z^{(n)}(\tilde s) \bigr) + (t-s) \sqrt{n} (\tilde t - t - s + \tilde s).\]
The fourth moment of the first term is
\[\E \Big( (t-s)n \bigl( Z^{(n)}(\tilde t) - Z^{(n)}(\tilde s) \bigr) \Big)^4 \leq (t-s)^4n^4 \cdot 243 (\tilde t - \tilde s)^2 \leq 243(t-s)^2\]
since $|t-s| \leq |\tilde t - \tilde s| = \frac1n$. Likewise, the fourth power of the second term is easily seen to be bounded above by $(t-s)^2$. So the elementary inequality $\E((X+Y)^4) \leq 8(\E(X^4) + \E(Y^4))$ yields
\[E \bigl(\bigl|Z^{(n)}(t) - Z^{(n)}(s)\bigr|^4 \bigr) \leq 8(243+1)(t-s)^2 = 1952(t-s)^2.\]
Finally, in the general case that $t$ and $s$ are arbitrary real numbers in the interval $[0,1]$ such that $tn$ and $sn$ do not lie between consecutive integers, we can write
\[Z^{(n)}(t) - Z^{(n)}(s) = \bigl( Z^{(n)}(t) - Z^{(n)}(u_1) \bigr) + \bigl( Z^{(n)}(u_1) - Z^{(n)}(u_2) \bigr) + \bigl( Z^{(n)}(u_2) - Z^{(n)}(s) \bigr)\]
for some real numbers $u_1,u_2$ with $s \leq u_2 \leq u_1 \leq t$ such that $u_1n$ and $u_2n$ are integers and $tn \leq u_1n+1$ as well as $sn \geq u_2n-1$. Combining the bounds from above and using again the inequality $\E((X+Y+Z)^4) \leq 27(\E(X^4) + \E(Y^4) + \E(Z^4))$, we obtain
\begin{align*}
\E \bigl(\bigl|Z^{(n)}(t) - Z^{(n)}(s)\bigr|^4 \bigr) &\leq 27 \bigl( 1952(t-u_1)^2 + 243(u_1-u_2)^2 + 1952(u_2-s)^2 \bigr) \\
&\leq 27 \cdot 1952 (t-u_1+u_1-u_2+u_2-s)^2 = 52704(t-s)^2,
\end{align*}
completing the proof of the lemma with $\lambda = 52704$.
\end{proof}

All that remains now is to combine the two ingredients to prove
our main result on the limiting process.

\begin{proof}[Proof of Theorem~\ref{thm:edge-process}]
   The proof relies on the well-known result
   asserting that given tightness of the sequence of corresponding probability
   measures as well as convergence of the finite-dimensional probability distributions,
   a sequence of stochastic processes converges to a limiting process
   (see~\cite[Theorem 7.1, Theorem 7.5]{Billingsley:1999:covergence-probability}).

   Tightness is implied (see \cite[Theorems 13.29, 21.42]{Klenke:2020:probability-theory-course})
   by tightness of the initial distributions via Lemma~\ref{lem:edge-process:tight}
   and the moment bound in Lemma~\ref{lem:edge-process:paths-ok}.
   The (limiting) behavior
   of the finite-dimensional distributions of the original process
   $(K_{\floor{tn}}^{(n)})_{t\in[0,1]}$ is characterized by
   Lemma~\ref{lem:edge-process:joint}. This characterization carries over
   to the linearly interpolated process by an application of Slutsky's
   theorem \cite[Theorem 13.18]{Klenke:2020:probability-theory-course} after
   observing that
   \begin{align*}
      \P\biggl(\abs[\bigg]{Z^{(n)}(t) - \frac{K_{\floor{tn}}^{(n)} - t^2 n}{\sqrt{n}}} > \varepsilon\biggr)
      & = \P\Bigl(\frac{\abs{\tilde K_{t}^{(n)} - K_{\floor{tn}}^{(n)}}}{\sqrt{n}} > \varepsilon\Bigr)
      \leq \frac{ \E((\tilde K_{t}^{(n)} - K_{\floor{tn}}^{(n)})^2) }{n\varepsilon^2}\\
      &\leq \frac{ \E( (K_{\floor{tn}+1}^{(n)} - K_{\floor{tn}}^{(n)})^2 )}{n \varepsilon^{2}}
      \xrightarrow{n\to\infty} 0,
   \end{align*}
   as a mechanical computation shows that
   $\E( (K_{\floor{tn}+1}^{(n)} - K_{\floor{tn}}^{(n)})^2 ) = \Oh(1)$ (this also follows from Lemma~\ref{lem:edge-process:paths-ok}).

   Note that as the finite-dimensional distributions converge to Gaussian
   distributions, the limiting process $(Z^{\infty}(t))_{t\in[0,1]}$
   is Gaussian itself---which means that it is fully characterized by its
   first and second order moments. As a consequence of Lemma~\ref{lem:edge-process:joint},
   we find for all $s, t\in [0,1]$ with $s < t$ that
   \[
         \E Z^{\infty}(t) = 0,\qquad
         \V Z^{\infty}(t) = t^2 (1 - t), \qquad
         \Cov(Z^{\infty}(s), Z^{\infty}(t)) = s^2 (1-t).
   \]
   It can be checked that if $(W(t))_{t\in[0,1]}$ is a standard Wiener process,
   the Gaussian process $((1-t)W(t^2/(1-t)))_{t\in[0,1]}$ has the same first
   and second order moments and therefore also the same distribution as $Z^{\infty}$.
\end{proof}

\section{Size of the root cluster}\label{sec:root-cluster}

We now shift our attention from the number of uncovered edges to the sizes of the
connected components (or \emph{clusters}) appearing in the graph throughout the uncover process.
It will prove convenient to change our tree model to \emph{rooted} labeled trees, as the
nature of rooted trees allows us to focus our investigation on one particular cluster -- the one
containing the root vertex. In case the root vertex has not yet been uncovered, we will consider
the size of the root cluster to be 0. Formally, we let the random variable $R_{n}^{(k)}$
be the size of the root cluster of a (uniformly) random rooted labeled tree of size
$n$ with $k$ uncovered vertices.

Using the symbolic method for labeled structures
(cf.~\cite[Chapter II]{Flajolet-Sedgewick:2009:analy}), we can set up a formal specification
for the corresponding combinatorial classes and subsequently extract functional equations
for the associated generating functions. Let $\Troot$ be the class of rooted labeled trees, and
let $\mathcal{G}$ be a refinement of $\Troot$ where the vertices can either be
covered or uncovered, and where uncovered vertices are marked with a marker $U$. Finally, let
$\mathcal{F}$ be a further refinement of $\mathcal{G}$ where all uncovered vertices in
the root cluster are additionally marked with marker $V$. A straightforward ``top-down approach'',
i.e., a decomposition of the members of the tree family w.r.t.\ the root vertex, yields the formal
specification
\begin{equation*}
  \mathcal{F} = \mathcal{Z} \ast \textsc{Set}(\mathcal{G}) + \mathcal{Z} \times \{U,V\} \ast \textsc{Set}(\mathcal{F}), \qquad
	\mathcal{G} = \mathcal{Z} \ast \textsc{Set}(\mathcal{G}) + \mathcal{Z} \times \{U\} \ast \textsc{Set}(\mathcal{G}).
\end{equation*}
Note that the first summand in the formal description of $\mathcal{F}$ corresponds to the case where the root vertex is covered, thus the size of the root cluster is zero.

Introducing the corresponding exponential generating functions $F:=F(z,u,v)$ and $G:=G(z,u)$,
\begin{small}
\begin{align*}
  F(z,u,v) & := \sum_{T \in \mathcal{F}} \frac{z^{|T|} u^{\text{$\# U$ in $T$}} v^{\text{$\# V$ in $T$}}}{|T|!} = \sum_{n \ge 1} \sum_{0 \le k \le n} \sum_{m \ge 0} \frac{n^{n-1}}{n!} \binom{n}{k} \mathbb{P}\{R_{n}^{(k)} = m\} z^{n} u^{k} v^{m},\\
	G(z,u) & := \sum_{T \in \mathcal{G}} \frac{z^{|T|} u^{\text{$\# U$ in $T$}}}{|T|!} = \sum_{n \ge 1} \sum_{0 \le k \le n} \frac{n^{n-1}}{n!} \binom{n}{k} z^{n} u^{k},
\end{align*}
\end{small}
we obtain the characterizing equations
\begin{equation}\label{eqn:FG_R_labeled}
  F = z e^{G} + zuv e^{F}, \qquad G = z (1+u) e^{G}.
\end{equation}
Of course, $G(z,u) = T^{\bullet}(z(1+u))$, where $T^{\bullet}$ is the exponential generating
function associated with $\Troot$, the Cayley tree function.
Starting with \eqref{eqn:FG_R_labeled}, the following results on $R_{n}^{(k)}$ can be deduced.

\begin{theorem}\label{thm:R_Exp_labeled}
The expectation $\mathbb{E}(R_{n}^{(k)})$ is, for $0 \le k \le n$ and $n \ge 1$, given by
\begin{equation}\label{eq:R_Exp_labeled:explicit-sum}
  \mathbb{E}(R_{n}^{(k)}) = \sum_{j=1}^{k} \frac{j \, k^{\underline{j}}}{n^{j}}.
\end{equation}

Depending on the growth of $k=k(n)$, $\mathbb{E}(R_{n}^{(k)})$ has the following asymptotic behavior:
\begin{equation*}
  \mathbb{E}(R_{n}^{(k)}) \sim
	\begin{cases}
	  \frac{k}{n}, & \text{for $k=o(n)$}, \quad \text{($k$ small)},\\
		\frac{\alpha}{(1-\alpha)^{2}}, & \text{for $k \sim \alpha n$, with $0 < \alpha < 1$}, \quad \text{($k$ in central region)},\\
		\frac{n^{2}}{d^{2}}, & \text{for $k=n-d$, with $d = \omega(\sqrt{n})$ and $d = o(n)$},\\[-0.5ex]
		& \quad \text{($k$ subcritically large)},\\
		\kappa n, & \mspace{-54mu}\text{with} \quad \kappa = 1-c e^{\frac{c^{2}}{2}} \int_{c}^{\infty} e^{-\frac{t^{2}}{2}} dt,\\[-0.5ex]
		& \text{for $k=n-d$, with $d \sim c \sqrt{n}$ and $c > 0$},\\[-0.5ex]
		& \quad \text{($k$ critically large)},\\
		n - \sqrt{\frac{\pi}{2}} d \sqrt{n}, & \text{for $k=n-d$, with $d = o(\sqrt{n})$},\\[-0.5ex]
		& \quad \text{($k$ supercritically large)}.
		\end{cases}
\end{equation*}

\end{theorem}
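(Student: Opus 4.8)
The plan is to extract $\E(R_n^{(k)})$ from the generating function identities in \eqref{eqn:FG_R_labeled}. Differentiating $F(z,u,v)$ with respect to $v$ and setting $v=1$ produces the bivariate generating function $H(z,u) := \partial_v F(z,u,v)\big|_{v=1}$ whose coefficients encode $\binom{n}{k}\E(R_n^{(k)}) \cdot n^{n-1}/n!$. From $F = ze^G + zuv\,e^F$ we get, at $v=1$ (where $F(z,u,1) = G(z,u) =: T^\bullet(z(1+u))$, call it $y$), the linear equation $H = zu\,e^{y}(1 + H) = \frac{u}{1+u}\,y\,(1+H)$, using $z e^{y} = \frac{y}{1+u}$. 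Solving, $H = \frac{uy}{1+u-uy}$. So the first task is to read off $[z^n u^k] H(z,u)$, normalized by the number of rooted labeled trees with $k$ marked uncovered vertices, i.e.\ by $\binom{n}{k}$ (since $G(z,u)=T^\bullet(z(1+u))$ gives $[z^n u^k] G = \binom{n}{k} n^{n-1}/n!$). This is a Lagrange-inversion computation: with $y = T^\bullet(z(1+u))$ satisfying $y = z(1+u)e^{y}$, one expands $\frac{uy}{1+u-uy}$ as a power series in $y$ and applies the Lagrange inversion formula for $y$ in terms of $w := z(1+u)$, then extracts the $u^k$ coefficient from the resulting polynomial in $(1+u)$ and $u$. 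I expect this to collapse, after simplification of binomial sums, to the clean formula $\E(R_n^{(k)}) = \sum_{j=1}^k j\,k^{\underline j}/n^j$; an alternative and possibly cleaner route is to guess this formula from small cases and verify it directly against the recurrence that $H$ satisfies, or to give a direct combinatorial/probabilistic derivation (the term $j\,k^{\underline j}/n^j$ has the flavor of: pick an ordered path of $j$ distinct labels among the $k$ uncovered ones that connects to the root, each contributing a factor $1/n$ for the probability that a prescribed edge is present).

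Once the exact formula \eqref{eq:R_Exp_labeled:explicit-sum} is established, the asymptotic regimes follow by standard estimates on the sum $S_n(k) := \sum_{j=1}^k j\,k^{\underline j}/n^j$. Writing $k^{\underline j}/n^j = \prod_{i=0}^{j-1}\frac{k-i}{n}$, each summand is $\le (k/n)^j$, and more precisely $k^{\underline j}/n^j = (k/n)^j \exp\big(-\binom{j}{2}/k + O(j^3/k^2)\big)$ for $j = o(k^{2/3})$. For $k = o(n)$: the $j=1$ term is $k/n$ and the tail is $O((k/n)^2)$, so $S_n(k)\sim k/n$. For $k\sim\alpha n$ with $0<\alpha<1$: the geometric-type decay with ratio $\approx\alpha$ means the sum is dominated by small $j$, where $k^{\underline j}/n^j\to\alpha^j$, giving $S_n(k)\to\sum_{j\ge1} j\alpha^j = \alpha/(1-\alpha)^2$ (one checks the correction terms vanish in the limit by dominated convergence). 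The subcritical case $k = n-d$, $d=\omega(\sqrt n)$, $d=o(n)$: now $k^{\underline j}/n^j \approx e^{-jd/n - j^2/(2n)}$, and since $d/n\to 0$ the sum behaves like $\sum_j j e^{-j^2/(2n) - jd/n}$; replacing the sum by an integral and noting $d^2/n\to\infty$, the Gaussian factor is negligible compared to the exponential, yielding $\sum_j j e^{-jd/n}\sim (n/d)^2$.

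The delicate case — and the main obstacle — is the \emph{critically large} regime $d\sim c\sqrt n$, where neither the quadratic term $j^2/(2n)$ nor the linear term $jd/n$ dominates: setting $j = x\sqrt n$, we have $k^{\underline j}/n^j \approx \exp(-cx\sqrt n/\sqrt n \cdot \sqrt n$\,\dots$)$ — more carefully, $\log(k^{\underline j}/n^j) = -jd/n - j^2/(2n) + O(j^3/n^2) = -cx - x^2/2 + o(1)$ uniformly for $x$ in compact sets, so $S_n(k) = \sqrt n \int_0^\infty x\, e^{-cx - x^2/2}\,dx\,(1+o(1))$, and this integral evaluates (completing the square, $e^{-cx-x^2/2} = e^{c^2/2}e^{-(x+c)^2/2}$, then integrating by parts) to exactly $1 - c e^{c^2/2}\int_c^\infty e^{-t^2/2}\,dt = \kappa$. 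The technical care needed here is a uniform tail bound justifying the interchange of sum/integral and limit over the full range $1\le j\le k$ (not just compact $x$-ranges), which one gets from the monotone exponential bound $k^{\underline j}/n^j \le e^{-jd/n}$ for the large-$j$ tail and the $j^3/n^2$ error control for moderate $j$. Finally, the \emph{supercritical} case $d = o(\sqrt n)$: here $S_n(k)$ is close to $\sum_{j\ge1} j e^{-j^2/(2n)} \sim \sqrt n \int_0^\infty x e^{-x^2/2}\,dx = \sqrt n$ at leading order, but since the claimed answer is $n - \sqrt{\pi/2}\,d\sqrt n$ we actually need $\E(R_n^{(k)})$ from the complementary side: $n - S_n(k)$ should be computed, or equivalently one observes $R_n^{(k)} = n$ minus the number of uncovered-but-not-in-root vertices plus covered vertices, and the $d$ missing vertices contribute a correction of order $d\sqrt n$ with constant $\sqrt{\pi/2}$ coming from $\int_0^\infty e^{-x^2/2}\,dx = \sqrt{\pi/2}$. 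I would handle this last regime by a more careful two-term expansion of $S_n(k)$ in powers of $d/\sqrt n$, isolating the linear coefficient.
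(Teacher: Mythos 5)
Your derivation of the exact formula tracks the paper's: you differentiate $F$ at $v=1$ and arrive at $H = \frac{uy}{1+u-uy}$, which is the same function the paper calls $E$ (written there as $\frac{1}{1-\frac{u}{1+u}G}-1$), and you appeal to Lagrange inversion. You leave the coefficient extraction as a conjecture (``I expect this to collapse\dots''), whereas the paper carries it out in two lines; that step should be completed, but it is routine.

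For the asymptotics you genuinely diverge from the paper: you analyze the sum $S_n(k)=\sum_{j\ge 1} j\,k^{\underline j}/n^j$ directly, whereas the paper first establishes the integral representation $\E(R_n^{(k)}) = \int_0^\infty (x-1)e^{-x}(1+x/n)^k\,dx$ and then does Laplace-type estimates on that integral. Both routes are legitimate and essentially dual; the paper's integral form buys a clean, regime-uniform tail bound $e^{-x}(1+x/n)^k \le e^{-n(t-\log(1+t))}$ ($t=x/n$) that makes the tail exchange immediate, while your discrete approach needs the analogous uniform bound on $k^{\underline j}/n^j$ and a sum-to-integral comparison in each regime, which you acknowledge but do not pin down.

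There is also a concrete error in your scaling. In the critical regime, with $j=x\sqrt n$ the sum $\sum_j j\,e^{-cj/\sqrt n - j^2/(2n)}$ converts as $j\,dj \approx n\,x\,dx$, so $S_n(k)\sim n\int_0^\infty x e^{-cx-x^2/2}\,dx = \kappa n$, not $\sqrt n\cdot\kappa$; you dropped a factor $\sqrt n$. The same slip occurs in the supercritical regime: $\sum_j j e^{-j^2/(2n)}\sim n$, not $\sqrt n$. That error is exactly what throws you into the spurious notion of a ``complementary side''---there is none, since $\E(R_n^{(k)}) = S_n(k)$ identically. With the scaling fixed, the supercritical case is just the two-term expansion you mention at the very end: $S_n(k)\approx \sum_j j(1-jd/n)e^{-j^2/(2n)} \approx n - \frac{d}{n}\int_0^\infty x^2 e^{-x^2/(2n)}dx = n - \sqrt{\tfrac{\pi}{2}}\,d\sqrt n$, with no detour needed.
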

\begin{proof}
   After introducing $E:= E(z,u) = \left.\frac{\partial}{\partial v}F(z,u,v)\right|_{v=1} = \sum_{n,k} \frac{n^{n-1}}{n!} \binom{n}{k} \mathbb{E}(R_{n}^{(k)})$,
   considering the partial derivative of~\eqref{eqn:FG_R_labeled} with respect to $v$ easily yields
   \begin{equation*}
      E = \frac{1}{1-\frac{u}{1+u}G} - 1.
   \end{equation*}

   Extracting coefficients of $E$ by an application of the Lagrange inversion formula (see, e.g., \cite[Theorem A.2]{Flajolet-Sedgewick:2009:analy}) yields
	\begin{align*}
	  [z^{n}] E & = \frac{1}{n} [G^{n-1}] \frac{u}{(1+u) (1-\frac{u}{1+u} G)^{2}} \cdot (1+u)^{n} e^{n G}\\
		& = \sum_{j=0}^{n-1} (j+1) u^{j+1} (1+u)^{n-1-j} \cdot \frac{n^{n-2-j}}{(n-1-j)!},
	\end{align*}
	and further
	\begin{equation*}
	  [z^{n} u^{k}] E = \sum_{j=0}^{k-1} (j+1) \binom{n-1-j}{k-1-j} \frac{n^{n-2-j}}{(n-1-j)!}.
	\end{equation*}
  Using $\mathbb{E}(R_{n}^{(k)}) = \frac{[z^{n} u^{k}] E}{[z^{n} u^{k}] G} = \frac{n! \, [z^{n} u^{k}] E}{n^{n-1} \binom{n}{k}}$, we obtain the result stated in \eqref{eq:R_Exp_labeled:explicit-sum}:
	\begin{align*}
	  \mathbb{E}(R_{n}^{(k)}) & = n! \sum_{j=0}^{k-1} (j+1) \frac{\binom{n-1-j}{k-1-j}}{\binom{n}{k}} \cdot \frac{n^{-1-j}}{(n-1-j)!}
		= n! \sum_{j=0}^{k-1} \frac{j+1}{n^{1+j} (n-1-j)!} \cdot \frac{k^{\underline{j+1}}}{n^{\underline{j+1}}}\\
		& = \sum_{j=1}^{k} \frac{j \, k^{\underline{j}}}{n^{j}}.
	\end{align*}

	In order to analyze the asymptotic behavior of $\mathbb{E}(R_{n}^{(k)})$, the following integral representation turns out to be advantageous,
   \begin{equation}\label{eq:R_Exp_labeled:integral_representation}
   \mathbb{E}(R_{n}^{(k)}) = \int_{0}^{\infty} (x-1) \, e^{-x} \Big(1+\frac{x}{n}\Big)^{k} dx.
   \end{equation}
	It can be verified in a straightforward way by using the integral representation of the Gamma function:
	\begin{align*}
	  & \int_{0}^{\infty} (x-1) \, e^{-x} \Big(1+\frac{x}{n}\Big)^{k} dx = \int_{0}^{\infty} (x-1) e^{-x} \sum_{j=0}^{k} \binom{k}{j} \frac{x^{j}}{n^{j}} dx\\
		& \quad = \sum_{j=0}^{k} \frac{\binom{k}{j}}{n^{j}} \left(\int_{0}^{\infty} e^{-x} x^{j+1} dx - \int_{0}^{\infty} e^{-x} x^{j} dx\right)
		= \sum_{j=0}^{k} \frac{\binom{k}{j}}{n^{j}} \big((j+1)! - j!\big)\\
		& \quad = \sum_{j=0}^{k} \frac{k^{\underline{j}} \, j \, j!}{j! \, n^{j}} = \sum_{j=0}^{k} \frac{j \, k^{\underline{j}}}{n^{j}}.
	\end{align*}

	In order to evaluate the integral~\eqref{eq:R_Exp_labeled:integral_representation} asymptotically, we first show that for the range $x \ge n^{\frac{1}{2} + \epsilon}$, with arbitrary but fixed $\epsilon > 0$, the contribution to the integral is exponentially small and thus negligible. Namely, when considering the integrand and setting $t = \frac{x}{n}$, we obtain the following estimate, uniformly for $k \in [0,n]$:
	\begin{equation*}
	  e^{-x} \Big(1+\frac{x}{n}\Big)^{k} \le e^{-x} \Big(1+\frac{x}{n}\Big)^{n} = e^{-x + n \ln(1+\frac{x}{n})} = e^{-n \big(t-\ln(1+t)\big)}.
	\end{equation*}
	Simple monotonicity considerations yield $t - \ln(1+t) \ge \frac{t}{4}$, for $t \ge 1$ (thus $x \ge n$), and $t - \ln(1+t) \ge \frac{t^{2}}{4}$, for $t \in [0,1]$ (thus $x \in [0,n]$). Due to these estimates and by evaluating the resulting integral, we get the following bounds on the integral for the respective ranges:
	\begin{align*}
	  \int_{n}^{\infty} (x-1) e^{-x} \Big(1+\frac{x}{n}\Big)^{k} dx & \le \int_{n}^{\infty} x e^{-\frac{x}{4}} dx = 16 \big(1+\frac{n}{4}\big) e^{-\frac{n}{4}},\\
		\int_{n^{\frac{1}{2} + \epsilon}}^{n} (x-1) e^{-x} \Big(1+\frac{x}{n}\Big)^{k} dx & \le \int_{n^{\frac{1}{2} + \epsilon}}^{\infty} x e^{-\frac{x^{2}}{4n}} dx = 2n e^{-\frac{1}{4} n^{2 \epsilon}},
	\end{align*}
	thus, by combining both cases, uniformly for $k \in [0,n]$ and $\epsilon \in (0, \frac{1}{2}]$:
	\begin{equation*}
	  \int_{n^{\frac{1}{2} + \epsilon}}^{\infty} (x-1) e^{-x} \Big(1+\frac{x}{n}\Big)^{k} dx = \Oh\big(n e^{-\frac{1}{4} n^{2 \epsilon}}\big).
	\end{equation*}

	Furthermore, we note that (roughly speaking) if $k$ is sufficiently far away from $n$ the integration range with negligible contribution can be extended. Namely, setting $\delta = 1-\frac{k}{n}$ and $x=nt$, we obtain for the integrand
	\begin{equation*}
	  e^{-x} \Big(1+\frac{x}{n}\Big)^{k} = e^{-n \big(t-\frac{k}{n}\ln(1+t)\big)} = e^{-n \big(t-(1-\delta)\ln(1+t)\big)} \le e^{-n \delta t} = e^{-\delta x} = e^{-(1-\frac{k}{n})x},
	\end{equation*}
	where we used the trivial estimate $\ln(1+t) \le t$, for $t \ge 0$. E.g., if $\delta \ge n^{-\frac{1}{4}}$, i.e., $k \le n - n^{\frac{3}{4}}$, one easily obtains from this estimate that the contribution to the integral from the range $x \ge n^{\frac{1}{4} + \epsilon}$ is asymptotically negligible:
	\begin{equation*}
	  \int_{n^{\frac{1}{4} + \epsilon}}^{n^{\frac{1}{2} + \epsilon}} (x-1) e^{-x} \Big(1+\frac{x}{n}\Big)^{k} dx \le \int_{n^{\frac{1}{4} + \epsilon}}^{\infty} x e^{-(1-\frac{k}{n})x} dx \le \int_{n^{\frac{1}{4} + \epsilon}}^{\infty} x e^{-n^{-\frac{1}{4}}x} dx = \Oh\big(n e^{-n^{\epsilon}}\big).
	\end{equation*}

	To get the asymptotic expressions for the integral stated in the theorem, we will take into account the growth of $k$ w.r.t.\ $n$, consider suitable expansions of the integrand for the range $x \le n^{\frac{1}{2}+\epsilon}$ (or $x \le n^{\frac{1}{4}+\epsilon}$, respectively) and evaluate the resulting integrals, where we apply the ``tail exchange technique'', i.e., we may extend the integration range to $x \ge 0$, since only asymptotically negligible contributions are added.
	\begin{itemize}
	  \item $k$ small or in the central region: assuming $k \le n - n^{\frac{3}{4}}$, an expansion of the integrand for $x \le n^{\frac{1}{4} + \epsilon}$ leads to the expansion (with a uniform bound in this range):
		\begin{align*}
		  e^{-x} \Big(1+\frac{x}{n}\Big)^{k} & = e^{-x + k \ln(1+\frac{x}{n})} = e^{-x (1-\frac{k}{n}) + \Oh(\frac{k x^{2}}{n^{2}})}
			= e^{-x(1-\frac{k}{n})} \cdot \Big(1+\Oh\big({\textstyle{\frac{k x^{2}}{n^{2}}}}\big)\Big)\\
			& = e^{-x(1-\frac{k}{n})} \cdot \Big(1+\Oh\big(n^{-\frac{1}{2}+2\epsilon}\big)\Big),
		\end{align*}
		and thus to the following evaluation of the integral:
		\begin{align*}
		  & \int_{0}^{n^{\frac{1}{4}+\epsilon}} (x-1) e^{-x} \Big(1+\frac{x}{n}\Big)^{k} dx = \int_{0}^{n^{\frac{1}{4}+\epsilon}} (x-1) e^{-x(1-\frac{k}{n})} dx \cdot \Big(1+\Oh\big(n^{-\frac{1}{2}+2\epsilon}\big)\Big) \\
			& \quad = \int_{0}^{\infty} (x-1) e^{-x(1-\frac{k}{n})} dx \cdot \Big(1+\Oh\big(n^{-\frac{1}{2}+2\epsilon}\big)\Big) = \frac{\frac{k}{n}}{(1-\frac{k}{n})^{2}} \cdot \Big(1+\Oh\big(n^{-\frac{1}{2}+2\epsilon}\big)\Big),
		\end{align*}
		where we used the formula
		\begin{equation}\label{eq:integral_evaluation}
		  \int_{0}^{\infty} (x-1) e^{-\nu x} dx = \frac{1-\nu}{\nu^{2}}, \quad \text{for $\nu > 0$}.
		\end{equation}
		Of course, this gives in particular
		\begin{equation*}
		  \mathbb{E}(R_{n}^{(k)}) \sim
			\begin{cases} \frac{k}{n}, & \quad \text{for $k = o(n)$},\\
			\frac{\alpha}{(1-\alpha)^{2}}, & \quad \text{for $k \sim \alpha n$, with $0 <\alpha < 1$}.
			\end{cases}
		\end{equation*}
		\item $k$ subcritically large: in the following we treat cases with $\frac{k}{n} \to 1$; there we have to distinguish according to the growth behaviour of the difference $d=n-k$. First we examine the region $\sqrt{n} \ll d \ll n$, i.e., $d = o(n)$, but $d = \omega(\sqrt{n})$, for which we get the following expansion of the integrand (for $x \le n^{\frac{1}{2} + \epsilon}$):
		\begin{align*}
		  e^{-x} \Big(1+\frac{x}{n}\Big)^{k} & = e^{-x + (n-d) \ln(1-\frac{x}{n})} = e^{-\frac{d}{n} x + \Oh(\frac{x^{2}}{n}) + \Oh(\frac{x^{3}}{n^{2}})}\\
			& = e^{-\frac{dx}{n}} \cdot \Big(1+\Oh\big({\textstyle{\frac{x^{2}}{n}}}\big) + \Oh\big({\textstyle{\frac{x^{3}}{n^{2}}}}\big)\Big).
		\end{align*}
		Considering the corresponding integral (and applying tail exchange) we obtain
		\begin{multline*}
		  \mathbb{E}(R_{n}^{(k)}) = \int_{0}^{\infty} (x-1) e^{-\frac{d x}{n}} dx\\
			\mbox{} + \Oh\left(\frac{1}{n} \cdot \int_{0}^{\infty} (x-1) x^{2} e^{-\frac{d x}{n}} dx\right) + \Oh\left(\frac{1}{n^{2}} \cdot \int_{0}^{\infty} (x-1) x^{3} e^{-\frac{d x}{n}} dx\right).
		\end{multline*}
		Using \eqref{eq:integral_evaluation} and
		\begin{equation*}
		  \int_{0}^{\infty} (x-1) x^{\ell} e^{-\nu x} dx = \Oh\big({\textstyle{\frac{1}{\nu^{\ell+2}}}}\big), \quad \text{for $\ell \ge 0$ and $\nu \in (0,1)$},
		\end{equation*}
		we obtain the stated result:
		\begin{equation*}
		  \mathbb{E}(R_{n}^{(k)}) = \frac{n^{2}}{d^{2}} + \Oh\big({\textstyle{\frac{n}{d}}}\big) + \Oh\big({\textstyle{\frac{n^{3}}{d^{4}}}}\big)
			= \frac{n^{2}}{d^{2}} \cdot \Big(1+\Oh\big({\textstyle{\frac{d}{n}}}\big) + \Oh\big({\textstyle{\frac{n}{d^{2}}}}\big)\Big)
			\sim \frac{n^{2}}{d^{2}}.
		\end{equation*}
		\item $k$ critically large: for the case that the difference $d=n-k$ is of order $\Theta(\sqrt{n})$, we obtain the following expansion of the integrand:
		\begin{equation*}
		  e^{-x} \Big(1+\frac{x}{n}\Big)^{k} = e^{-\frac{d x}{n} - \frac{x^{2}}{2n}} \cdot \Big(1+\Oh\big({\textstyle{\frac{d x^{2}}{n^{2}}}}\big) + \Oh\big({\textstyle{\frac{x^{3}}{n^{2}}}}\big)\Big).
		\end{equation*}
		Thus, after completing the integrals occurring, we get
		\begin{align*}
		  \mathbb{E}\big(R_{n}^{(k)}\big) & = \int_{0}^{\infty} (x-1) e^{-\frac{d x}{n} - \frac{x^{2}}{2n}} \, dx\\
			& \qquad \mbox{} + \Oh\Big(\frac{d}{n^{2}} \cdot \int_{0}^{\infty} x^{3} e^{-\frac{d x}{n} - \frac{x^{2}}{2n}} \, dx\Big)
			+ \Oh\Big(\frac{1}{n^{2}} \cdot \int_{0}^{\infty} x^{4} e^{-\frac{d x}{n} - \frac{x^{2}}{2n}} \, dx\Big).
		\end{align*}
		Since, for $\ell \ge 0$,
		\begin{equation*}
		  \int_{0}^{\infty} x^{\ell} e^{-\frac{d x}{n} - \frac{x^{2}}{2n}} dx = \Oh\Big( \int_{0}^{\infty} x^{\ell} e^{- \frac{x^{2}}{2n}} dx \Big)
			= \Oh\big(n^{\frac{\ell+1}{2}}\big),
		\end{equation*}
		this yields
		\begin{equation*}
		  \mathbb{E}\big(R_{n}^{(k)}\big) = \int_{0}^{\infty} x \, e^{-\frac{d x}{n} - \frac{x^{2}}{2n}} dx + \Oh(\sqrt{n}).
		\end{equation*}
		Setting $d = c \sqrt{n}$ and applying the substitution $t=c+\frac{x}{\sqrt{n}}$, we evaluate the integral obtaining the stated result:
		\begin{equation*}
		  \int_{0}^{\infty} x \, e^{-\frac{d x}{n} - \frac{x^{2}}{2n}} \, dx = n \int_{c}^{\infty} (t-c) \, e^{\frac{c^{2}}{2} - \frac{t^{2}}{2}} dt
			= n \left(1-c e^{\frac{c^{2}}{2}} \cdot \int_{c}^{\infty} e^{-\frac{t^{2}}{2}} dt\right).
		\end{equation*}
		\item $k$ supercritically large: for $d=n-k = o(\sqrt{n})$, we get the expansion
		\begin{equation*}
		  e^{-x} \Big(1+\frac{x}{n}\Big)^{k} 
			= e^{-\frac{x^{2}}{2n}} \cdot \big(1-\frac{dx}{n}\big) \cdot \Big(1+\Oh\big({\textstyle{\frac{d^{2} x^{2}}{n^{2}}}}\big) + \Oh\big({\textstyle{\frac{x^{3}}{n^{2}}}}\big)\Big).
		\end{equation*}
		Computations analogous to the previous ones, using
		\begin{equation*}
		  \int_{0}^{\infty} x^{\ell} e^{-\frac{x^{2}}{2n}} \, dx = 2^{\frac{\ell-1}{2}} \Gamma\big({\textstyle{\frac{\ell+1}{2}}}\big) \cdot n^{\frac{\ell+1}{2}}, \quad \text{for $\ell \ge 0$},
		\end{equation*}
		lead to the stated result:
		\begin{align*}
		  \mathbb{E}\big(R_{n}^{(k)}\big) & = \int_{0}^{\infty} x e^{-\frac{x^{2}}{2n}} dx
			- \frac{d}{n} \int_{0}^{\infty} x^{2} e^{-\frac{x^{2}}{2n}} dx + \Oh(d^{2}) + \Oh(\sqrt{n})\\
			& = n - \frac{\sqrt{\pi}}{\sqrt{2}} d \sqrt{n} + \Oh(d^{2}) + \Oh(\sqrt{n}).
		\end{align*}
	\end{itemize}
\end{proof}

We can even obtain the exact distribution of $R_{n}^{(k)}$. There are two different approaches
we want to briefly sketch: for one, an explicit formula for the generating function
$F=F(z,u,v)$ can be found either from manipulating the recursive
description~\eqref{eqn:FG_R_labeled}, or directly by decomposing $\mathcal{F}$
as a tree forming the uncovered root cluster with a forest with covered roots
attached. Either way, this yields
\begin{equation*}
  F = T^{\bullet}\big(v X e^{-X}\big) + \frac{G}{1+u}, \quad \text{with} \quad X= \frac{u G}{1+u}.
\end{equation*}
Note that the second summand, $\frac{G}{1+u} = z e^{G}$, corresponds to the case where the root vertex is covered.
Extracting coefficients via an application of the Lagrange inversion formula then yields an explicit formula for $F_{n,k,m} := n! [z^{n} u^{k} v^{m}] F(z,u,v)$, i.e., the number of labeled rooted trees with $n$ vertices of which $k$ are uncovered and $m$ belong to the root cluster (for $0 \le m \le k \le n$ and $n \ge 1$):
\begin{equation*}
  F_{n,k,m} =
	\begin{cases}
	  \binom{n-1}{k} n^{n-1}, & \quad m =0,\\
	  \binom{n}{m} \binom{n-m-1}{k-m} n^{n-k-1} m^{m} (n-m)^{k-m}, & \quad m \ge 1.
	\end{cases}
\end{equation*}
From this formula, the probabilities $\P(R_{n}^{(k)} = m) = \frac{F_{n,k,m}}{n^{n-1} \binom{n}{k}}$ given in Theorem~\ref{thm:R_Prob_labeled} can be obtained directly. We omit these straightforward, but somewhat lengthy computations, since in the following the results are deduced in a more general and elegant way.

Alternatively, there is also a more combinatorial approach to determine these
probabilities: there is an elementary formula enumerating trees where a specified
set of vertices forms a cluster.

\begin{lemma}\label{claim:tree-enumeration}
   Let $n$ and $k$ be positive integers, and let $r_1,r_2,\ldots,r_{\ell}$ be fixed positive integers with $r_1 +\cdots + r_{\ell} \leq k$. Moreover, fix disjoint subsets $R_1,\ldots,R_{\ell}$ of $[k]$ with $|R_i| = r_i$ for all $i$. The number of $n$-vertex labeled trees for which $R_1,\ldots,R_{\ell}$ are components of the forest induced by the vertices with labels in $[k]$ is given by
   \begin{equation}\label{eq:tree-enumeration:formula}
      n^{n-k-1}\big(n-r_1-\cdots-r_{\ell}\big)^{k-r_1-\cdots-r_{\ell}-1} (n-k)^{\ell} r_1^{r_1-1}\cdots r_{\ell}^{r_{\ell}-1}.
   \end{equation}
   \end{lemma}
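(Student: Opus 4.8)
The plan is to strip the internal structure off the blocks $R_1,\dots,R_\ell$ by contraction, thereby reducing the count to a weighted spanning-tree enumeration that the matrix--tree theorem evaluates in closed form.

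Write $W=\{k+1,\dots,n\}$ for the set of ``covered'' vertices, $U=[k]\setminus(R_1\cup\dots\cup R_\ell)$, and $s=r_1+\dots+r_\ell$, and set $p=|W|=n-k$, $q=|U|=k-s$. The first, structural step is the observation that if $T$ is an $n$-vertex labeled tree for which each $R_i$ is a component of $T[[k]]$, then $T[R_i]$ is a tree and every edge of $T$ with exactly one endpoint in $R_i$ has its other endpoint in $W$ — an edge from $R_i$ to $U$ or to another $R_j$ would join $R_i$ to a different part of $T[[k]]$. Contracting each $R_i$ to a single vertex $\rho_i$ thus turns $T$ into a tree $T'$ on the $p+q+\ell$ vertices $W\cup U\cup\{\rho_1,\dots,\rho_\ell\}$ in which every $\rho_i$ is adjacent only to vertices of $W$; no multiple edges appear, since two edges of $T$ from $R_i$ to a common outside vertex would create a cycle. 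Conversely, $T$ is recovered from $T'$ by choosing a tree on each $R_i$ ($r_i^{\,r_i-2}$ ways, by Cayley's formula) together with, for each edge of $T'$ at $\rho_i$, a choice of its endpoint inside $R_i$ ($r_i^{\,\deg_{T'}(\rho_i)}$ ways). After checking that this is a bijection, one obtains
\[
   N=\Big(\prod_{i=1}^{\ell}r_i^{\,r_i-2}\Big)\,\Sigma,\qquad
   \Sigma:=\sum_{T'}\ \prod_{i=1}^{\ell}r_i^{\,\deg_{T'}(\rho_i)},
\]
where $N$ is the number to be determined and the sum ranges over all trees $T'$ on $W\cup U\cup\{\rho_1,\dots,\rho_\ell\}$ with each $\rho_i$ adjacent only to $W$.

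To evaluate $\Sigma$, give every vertex of $W\cup U$ weight $1$, each $\rho_i$ weight $r_i$, and each admissible edge the product of its endpoint weights; then $\Sigma$ is the weighted spanning-tree count of the graph $G$ obtained from the complete graph on $W\cup U$ by adjoining the $\ell$ vertices $\rho_i$, each joined to all of $W$. By the weighted matrix--tree theorem, $\Sigma$ equals the cofactor of the weighted Laplacian of $G$ obtained by deleting the row and column of $\rho_\ell$. Forming the Schur complement of the diagonal block indexed by $\rho_1,\dots,\rho_{\ell-1}$ reduces this determinant to $p^{\ell-1}\big(\prod_{i<\ell}r_i\big)\det M$ where, using that the total vertex weight equals $p+q+s=n$,
\[
   M=\begin{pmatrix} nI_p-\gamma J_p & -J_{p\times q}\\ -J_{q\times p} & (p+q)I_q-J_q\end{pmatrix},
   \qquad \gamma=\frac{p+s-r_\ell}{p},
\]
with $I$ an identity matrix and $J$ an all-ones block of the indicated size. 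The matrix $M$ is block-diagonalized by the splitting of $\mathbb{R}^{p+q}$ into the vectors supported on $W$ with zero coordinate sum (where $M$ acts as $n\cdot\mathrm{id}$), the vectors supported on $U$ with zero coordinate sum (where $M$ acts as $(p+q)\cdot\mathrm{id}$), and the two-dimensional space of vectors constant on each of $W$ and $U$, on which $M$ acts via $\left(\begin{smallmatrix} q+r_\ell & -q\\ -p & p\end{smallmatrix}\right)$ with determinant $p\,r_\ell$. Hence $\det M=n^{p-1}(p+q)^{q-1}p\,r_\ell$ and $\Sigma=n^{p-1}(p+q)^{q-1}p^{\ell}\prod_{i=1}^{\ell}r_i$.

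Substituting this into $N=\big(\prod_i r_i^{\,r_i-2}\big)\Sigma$ and using $p=n-k$, $p+q=n-s$ yields $N=n^{n-k-1}(n-s)^{k-s-1}(n-k)^{\ell}\prod_i r_i^{\,r_i-1}$, which is \eqref{eq:tree-enumeration:formula}. The only step I expect to be non-routine is the evaluation of $\det M$, but it is entirely structural — a combination of identity and all-ones blocks — so the eigenspace decomposition above disposes of it; a generating-function computation of $\Sigma$ via the Cayley tree function, parallel to the derivation of $F(z,u,v)$ recorded just before the lemma, is an alternative route. Finally, the degenerate ranges ($q=0$, $p=0$, $\ell=1$) should be matched against the conventions in the statement; the only genuinely problematic case is $k=n$ with a single block $R_1=[n]$, which the statement should be read as excluding.
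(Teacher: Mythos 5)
Your proposal is correct and uses essentially the same argument as the paper: contract each $R_i$ to a single vertex, reduce the problem to counting spanning trees of the resulting graph (the paper uses a multigraph with $r_i$ parallel edges from $v_i$, you use an equivalent vertex-weighted formulation), apply the matrix--tree theorem, reduce to a structured $(p+q)\times(p+q)$ block matrix (by row operations in the paper, by a Schur complement in yours --- the resulting matrices are identical up to a permutation of blocks and the choice of which contracted vertex indexes the deleted row/column), and diagonalize it via the same eigenspace decomposition, yielding $\det M = n^{p-1}(p+q)^{q-1}pr_\ell$. Multiplying back by $\prod r_i^{r_i-2}$ then gives the claimed count, exactly as in the paper.
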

   \begin{proof}
   We interpret each such tree $T$ as a spanning tree of a complete graph $K$ with $n$ vertices. Set $r = r_1 + \cdots +r_{\ell}$. The vertices of $K$ can be divided into the sets $R_1,\ldots,R_{\ell}$, the remaining $k - r$ vertices in $\{1,2,\ldots,k\}$ forming a set $Q$, and the $n-k$ vertices with label greater than $k$ forming a set $S$. Note first that the components induced by the sets $R_1,\ldots,R_{\ell}$ can be chosen in $r_1^{r_1-2} \cdots r_{\ell}^{r_\ell-2}$ ways. If the vertex sets corresponding to $R_1,\ldots,R_{\ell}$ are contracted to single vertices $v_1,\ldots,v_{\ell}$, $K$ becomes a multigraph $K'$ with $n - r + \ell$ vertices, and the tree $T$ becomes a spanning tree $T'$ of $K'$ upon contraction. Note that there are $r_i$ edges from $v_i$ to every other vertex in $K'$, and that $T'$ cannot contain any edges from $v_i$ to another vertex in $\{v_1,\ldots,v_{\ell} \} \cup Q$. Thus $T'$ remains a spanning tree if all such edges are removed from $K'$ to obtain a multigraph $K''$. Conversely, if we take an arbitrary spanning tree of $K''$ and replace the vertices $v_1,\ldots,v_{\ell}$ by spanning trees of  $R_1,\ldots,R_{\ell}$ respectively, we obtain a labeled tree with $n$ vertices that has the desired properties. It remains to count spanning trees of $K''$, which has an adjacency matrix of the block form
   $$A = \begin{bmatrix} O & O & \mathbf{r} \mathbf{1}^T \\ O & E - I & E \\ \mathbf{1} \mathbf{r}^T & E & E - I \end{bmatrix}$$
   Here, $\mathbf{1}$ denotes a (column) vector of $1$s, $\mathbf{r}$ a (column) vector whose entries are $r_1,\ldots,r_{\ell}$, $O$ a matrix of $0$s, $E$ a matrix of $1$s, and $I$ an identity matrix. The blocks correspond to $\ell$, $k-r$ and $n-k$ rows/columns, respectively. The number of spanning trees can now be determined by means of the matrix-tree theorem: the Laplacian matrix is given by
   $$L = \begin{bmatrix} (n-k) D & O & - \mathbf{r} \mathbf{1}^T \\ O & (n-r) I - E & -E \\ -\mathbf{1} \mathbf{r}^T & -E & n I - E \end{bmatrix},$$
   where $D$ is a diagonal matrix with diagonal entries $r_1,\ldots,r_{\ell}$. Our goal is to compute the determinant of $L$ with the first row and column removed; let this matrix be $L_1$. If we subtract $\frac{1}{n-k}$ times the first $\ell-1$ rows from all of the last $n-k$ rows of $L_1$, we obtain a matrix where all entries in the first $\ell-1$ columns, except those in the diagonal, are $0$. Thus the determinant is equal to the product of these diagonal entries $r_2(n-k),\ldots,r_{\ell}(n-k)$ times the determinant of a matrix of the block form
   $$\begin{bmatrix} (n-r) I - E & -E \\ -E & n I - \big( 1 + \tfrac{r-r_1}{n-k} \big)E \end{bmatrix},$$
   where the blocks have length $k-r$ and $n-k$ respectively. This matrix has $n-r$ as an eigenvalue of multiplicity $k-r-1$, since subtracting $n-r$ times the identity yields a matrix with $k-r$ identical rows. For the same reason, $n$ is an eigenvalue of multiplicity $n-k-1$. It remains to determine the remaining two eigenvalues. The corresponding eigenvectors can be constructed as follows: let the first $k-r$ entries (corresponding to the first block) be equal to $a$, and the remaining entries equal to $b$. It is easy to verify that this becomes an eigenvector for the eigenvalue $\lambda$ if the simultaneous equations
   \begin{align*}
   (n-k)a - (n-k)b &= \lambda a, \\
   -(k-r)a + (k-r+r_1)b &= \lambda b,
   \end{align*}
   are satisfied. The two solutions are the eigenvalues of the $2 \times 2$-coefficient matrix of this system, and their product is the determinant of this $2 \times 2$ matrix, which is
   $$(n-k)(k-r+r_1) - (n-k)(k-r) = (n-k)r_1.$$
   It finally follows that the determinant of $L_1$, thus the number of spanning trees of $K''$, is equal to
   \begin{multline*}
   r_2(n-k) \cdots r_{\ell}(n-k) \cdot (n-r)^{k-r-1} n^{n-k-1} (n-k)r_1 \\
   = n^{n-k-1}  (n-r)^{k-r-1} (n-k)^{\ell} r_1\cdots r_{\ell}.
   \end{multline*}
   Multiplying by $r_1^{r_1-2} \cdots r_{\ell}^{\ell-2}$ (the number of possibilities for the spanning trees induced in the components $R_1,\ldots,R_{\ell}$), we obtain the desired formula.
   \end{proof}

As a consequence of Lemma~\ref{claim:tree-enumeration} for $\ell = 1$,
the probability $\P(R_{n}^{(k)} = r)$ can be obtained by
multiplying $n^{n-k-1} (n-r)^{k-r-1} (n-k) r^{r-1}$ with $r \binom{k}{r}$
(which gives the number of rooted labeled trees on $n$ vertices whose
root is contained in a cluster of size $r$ among the first $k$
uncovered vertices),
and then normalizing by $n^{n-1}$, the number of labeled rooted trees on $n$ vertices.

\begin{theorem}\label{thm:R_Prob_labeled}
The exact distribution of $R_{n}^{(k)}$ is characterized by the following probability mass function (p.m.f.), which is given by the following formula for $0 \le m \le k \le n$ and $n \ge 1$ (and is equal to $0$ otherwise):
\begin{equation*}
  \P(R_{n}^{(k)} = m) =
	\begin{cases}
	  1-\frac{k}{n}, & \quad \text{for $m=0$},\\
		\frac{m^{m} (n-k) (n-m)^{k-m-1}}{n^{k}} \binom{k}{m}, & \quad \text{for $1 \le m \le k < n$},\\
		1, & \quad \text{for $m=k=n$}.
	\end{cases}
\end{equation*}

Depending on the growth of $k=k(n)$, we obtain the following limiting behavior:
\begin{itemize}
  \item $k$ small, i.e., $k = o(n)$:
	\begin{equation*}
    R_{n}^{(k)} \xrightarrow{p} 0.
  \end{equation*}
  \item $k$ in central region, i.e., $k \sim \alpha n$ with $0 < \alpha < 1$:
	\begin{gather*}
	  R_{n}^{(k)} \xrightarrow{d} R_{\alpha}, \quad \text{where the discrete r.v.\ $R_{\alpha}$ is characterized by its p.m.f.}\\
	  \P(R_{\alpha} = m) =: p_{m} =
		\begin{cases} 1-\alpha, & \quad m = 0,\\ \frac{m^{m}}{m!} (1-\alpha) \alpha^{m} e^{-\alpha m}, & \quad m \ge 1, \end{cases}
	\end{gather*}
or alternatively by the probability generating function $p(v) = \sum_{m \ge 0} p_{m} v^{m} = \frac{1-\alpha}{1-T^{\bullet}(v \alpha e^{-\alpha})}$.
	\item $k$ subcritically large, i.e., $k=n-d$ with $d=\omega(\sqrt{n})$ and $d=o(n)$:
	\begin{gather*}
	  \Big(\frac{d}{n}\Big)^{2} \cdot R_{n}^{(k)} \xrightarrow{d} \text{\textsc{Gamma}}\Big(\frac{1}{2},\frac{1}{2}\Big),
	\end{gather*}
where $\text{\textsc{Gamma}}(\frac{1}{2},\frac{1}{2})$ is a Gamma-distribution characterized by its density $f(x) = \frac{1}{\sqrt{2 \pi x}} \, e^{-\frac{x}{2}}$, for $x > 0$.
	\item $k$ critically large, i.e., $k=n-d$ with $d \sim c \sqrt{n}$ and $c > 0$:
	\begin{gather*}
	  \frac{1}{n} \cdot R_{n}^{(k)} \xrightarrow{d} R(c),
	\end{gather*}
where the continuous r.v.\ $R(c)$ is characterized by its density $f_{c}(x) = \frac{1}{\sqrt{2 \pi}} \frac{c}{\sqrt{x} (1-x)^{\frac{3}{2}}} \, e^{-\frac{c^{2} x}{2(1-x)}}$, for $0 < x < 1$.
	\item $k$ supercritically large, i.e., $k=n-d$ with $d=\omega(1)$ and $d = o(\sqrt{n})$:
	\begin{gather*}
	  \frac{1}{d^{2}} \cdot \big(n - R_{n}^{(k)}\big) \xrightarrow{d} D,
	\end{gather*}
where the continuous r.v.\ $D$ is characterized by its density $f(x) = \frac{1}{\sqrt{2 \pi} \, x^{\frac{3}{2}}} \, e^{-\frac{1}{2x}}$, $x > 0$.
	\item $k$ supercritically large with fixed difference, i.e., $k=n-d$ with $d$ fixed:
	\begin{gather*}
	  n - d - R_{n}^{(k)} \xrightarrow{d} D(d),\\
		\intertext{where the discrete r.v.\ $D(d)$ is characterized by the p.m.f.}
		\P(D(d) = j) =: p_{j} = e^{-d} \cdot \frac{d (d+j)^{j-1}}{j!} \cdot e^{-j}, \quad j \ge 0,
	\end{gather*}
or alternatively via the probability generating function $p(v) = \sum_{j \ge 0} p_{j} v^{j} = e^{d (T^{\bullet}(\frac{v}{e})-1)}$.
\end{itemize}
\end{theorem}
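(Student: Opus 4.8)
The plan is to pin down the exact probability mass function first, and then derive each of the six limit laws from it by an asymptotic analysis of binomial coefficients and powers.

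\emph{Exact distribution.} This is essentially already contained in the discussion preceding the theorem. For $1 \le m \le k < n$ one applies Lemma~\ref{claim:tree-enumeration} with $\ell = 1$ and $r_1 = m$: for a fixed $m$-element subset of $[k]$ there are $n^{n-k-1}(n-m)^{k-m-1}(n-k)m^{m-1}$ labeled trees on $[n]$ in which this set is a component of the forest induced by $[k]$; summing over the $\binom{k}{m}$ choices of the subset and the $m$ choices of which vertex of the cluster is the root, and dividing by $n^{n-1}$, gives exactly $\P(R_n^{(k)} = m) = \binom{k}{m} m^m (n-k)(n-m)^{k-m-1} n^{-k}$. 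The atom at $m = 0$ is the event that the root has a label greater than $k$, of probability $1 - k/n$ because all $n$ rootings of a uniformly random labeled tree are equally likely, and $m = k = n$ is trivial. One should also note that the formula stays valid at $m = k < n$, where the exponent $k - m - 1 = -1$ must be read together with the factor $n - k$.

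\emph{Regimes with a discrete limit.} When only a bounded number of vertices is ``free'' -- that is, $k$ small, $k$ in the central region, and $k$ supercritically large with $d = n - k$ fixed -- the limit law is discrete and it suffices to take pointwise limits of the p.m.f. For $k = o(n)$, $\P(R_n^{(k)} = 0) = 1 - k/n \to 1$, so $R_n^{(k)} \xrightarrow{p} 0$ (alternatively, $\E R_n^{(k)} \sim k/n \to 0$ by Theorem~\ref{thm:R_Exp_labeled} together with Markov's inequality). For $k \sim \alpha n$ and fixed $m \ge 1$, using $\binom{k}{m} \sim (\alpha n)^m/m!$ and $(n-m)^{k-m-1} = n^{k-m-1}(1 - m/n)^{k-m-1} \sim n^{k-m-1} e^{-\alpha m}$ yields $\P(R_n^{(k)} = m) \to \frac{m^m}{m!}(1-\alpha)\alpha^m e^{-\alpha m}$; that this, with $p_0 = 1 - \alpha$, defines a probability distribution with the stated p.g.f.\ follows from the Cayley tree function identity $T^{\bullet}(z) = z e^{T^{\bullet}(z)}$ (whence $T^{\bullet}(\alpha e^{-\alpha}) = \alpha$ for $\alpha \le 1$) and $\sum_{m \ge 1} \frac{m^m}{m!} x^m = \frac{T^{\bullet}(x)}{1 - T^{\bullet}(x)}$. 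The fixed-$d$ supercritical case is handled the same way with $j = n - d - m$ fixed: the p.m.f.\ converges to $p_j = e^{-d}\frac{d(d+j)^{j-1}}{j!}e^{-j}$, a Borel--Tanner-type law whose normalization and p.g.f.\ $e^{d(T^{\bullet}(v/e) - 1)}$ are again verified from the tree function identity.

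\emph{Regimes with a continuous limit.} The real work is in the three regimes where $d = n - k \to \infty$ -- $k$ subcritically large ($\sqrt n \ll d \ll n$), critically large ($d \sim c \sqrt n$), and supercritically large ($1 \ll d \ll \sqrt n$) -- where one needs a local limit theorem. I would apply Stirling's formula to $\binom{k}{m} = k!/(m!(k-m)!)$ and expand $\log \P(R_n^{(k)} = m)$ for $m$ of the relevant order ($m = \Theta((n/d)^2)$, $m = \Theta(n)$, resp.\ $n - m = \Theta(d^2)$). The factor $m^m$ in the p.m.f.\ cancels against the $m^m$ produced by Stirling on $m!$, and after collecting the remaining factors $k^k n^{-k}$, $(n-m)^{k-m-1}$, $(k-m)^{k-m}$, $n-k$ and the polynomial prefactor, one arrives at $\P(R_n^{(k)} = m) \sim \frac{d}{n\sqrt{2\pi m}}\, e^{-d^2 m/(2n^2)}$ in the subcritical case (a Riemann sum for the $\text{\textsc{Gamma}}(\frac12, \frac12)$ density after rescaling by $(d/n)^2$), $\P(R_n^{(k)} = m) \sim \frac1n f_c(m/n)$ in the critical case, and $\P(R_n^{(k)} = m) \sim \frac1{d^2} f((n-m)/d^2)$ in the supercritical case. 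To upgrade these local estimates to convergence in distribution one adds a tightness/tail argument -- the atom at $m = 0$ has mass $d/n \to 0$, and the mass near the edges of the support is controlled by the corresponding tails of the limit densities -- and a standard Riemann-sum argument for the distribution function. I expect the main obstacle to be precisely the bookkeeping in these Stirling expansions: several ``parasitic'' terms of sizes like $d^2 m/n^2$, $d^3 m/n^3$, $d^2 m^2/n^3$ appear individually and either cancel between $k^k n^{-k}$, $(n-m)^{k-m-1}$ and $(k-m)^{k-m}$ or are $o(1)$ only after such cancellations; making this uniform over the range of $m$ contributing to the limit, and over the full span $\sqrt n \ll d \ll n$ (including the transitions between consecutive regimes), is routine but genuinely delicate.
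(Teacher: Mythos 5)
Your proposal is correct and follows essentially the same route as the paper: the p.m.f.\ is read off from Lemma~\ref{claim:tree-enumeration} with $\ell = 1$ exactly as in the discussion preceding the theorem, and the six limit laws are extracted from it by Stirling's formula after a case distinction, which is precisely the (extremely terse) argument the paper gives. Your write-up supplies more of the bookkeeping than the paper does — in particular the explicit local approximations $\P(R_n^{(k)} = m) \sim \frac{d}{n\sqrt{2\pi m}}e^{-d^2 m/(2n^2)}$, $\frac{1}{n}f_c(m/n)$, and $\frac{1}{d^2}f((n-m)/d^2)$, and the p.g.f.\ verifications via the tree-function identities — but these are just the details the paper leaves to the reader, not a different method.
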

\begin{proof}
   The probability mass function of $R_{n}^{(k)}$ follows from the considerations
   made before the statement of the theorem. Due to its explicit nature, the limiting distribution results stated in Theorem~\ref{thm:R_Prob_labeled} can be obtained in a rather straightforward way by applying Stirling's formula for the factorials after distinguishing several cases.
\end{proof}

\begin{remark}
  Of course, for labeled trees, the distribution of $R_{n}^{(k)}$ matches with the distribution of the cluster size of a random vertex. Furthermore, by conditioning, one can easily transfer the results of Theorem~\ref{thm:R_Prob_labeled} to results for the size $S_{n}^{(k)}$ of the cluster of the $k$-th uncovered vertex: $\P(S_{n}^{(k)} = m) = \P(R_{n}^{(k)} = m | R_{n}^{(k)} > 0) = \frac{n}{k} \cdot \P(R_{n}^{(k)}=m)$, for $m \ge 1$.
\end{remark}

\section{Size of the largest uncovered component}\label{sec:largest-component}

With knowledge about the behavior of the root cluster at our disposal, we
return to non-rooted labeled trees and study the size of the
largest cluster. To this aim, we introduce the random variable $X_{n, r}^{(k)}$
which models the number of components of size $r$ after uncovering the
vertices $1$ to $k$ in a uniformly random labeled tree of size $n$.

Formally, $X_{n, r}^{(k)}\colon \mathcal{T}_n \to \Z_{\geq 0}$.
Note that we have, for all labeled trees $T\in \mathcal{T}_n$,
\begin{equation}\label{eq:component-sum}
   \sum_{r=1}^n r\cdot X_{n, r}^{(k)}(T) = k.
\end{equation}

\begin{theorem}\label{thm:expected-number-of-clusters}
   Let $n, k, r\in \Z_{\geq 0}$ with $0\leq r \leq k \leq n$. The
   expected number of connected components of size $r$ after uncovering
   $k$ vertices of a labeled tree of size $n$ chosen uniformly at random
   is
   \begin{equation}\label{eq:expected-components}
      \E X_{n,r}^{(k)} = \binom{k}{r} \Big(\frac{r}{n}\Big)^{r-1}
         \Bigl(1 - \frac{k}{n}\Bigr) \Bigl(1 - \frac{r}{n}\Bigr)^{k-r-1}.
   \end{equation}
\end{theorem}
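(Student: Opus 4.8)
The plan is to compute the expectation by linearity of expectation over all candidate components, and then invoke Lemma~\ref{claim:tree-enumeration}. Fix $n$, $k$, $r$ with $1 \le r \le k \le n$ (the case $r = 0$ being vacuous and the fully degenerate case $n = k = r$ to be checked separately). I would write
\[
   X_{n,r}^{(k)} = \sum_{R} \mathbf{1}_{R},
\]
where the sum runs over all $r$-element subsets $R \subseteq [k]$ and $\mathbf{1}_R$ is the indicator of the event that $R$ is a connected component of the forest induced by the vertices $[k]$ in the random tree. Since distinct components are disjoint, this is an exact identity, not merely a bound, so by linearity $\E X_{n,r}^{(k)} = \sum_R \P(\mathbf{1}_R = 1)$ with no inclusion--exclusion needed. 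Because the uniform distribution on $\mathcal{T}_n$ is invariant under relabeling while the label set $[k]$ is held fixed, $\P(\mathbf{1}_R = 1)$ depends only on $|R| = r$; hence $\E X_{n,r}^{(k)} = \binom{k}{r}\, \P(\mathbf{1}_{R_0} = 1)$ for any fixed reference $r$-subset $R_0 \subseteq [k]$.

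Next I would evaluate $\P(\mathbf{1}_{R_0} = 1)$ directly via Lemma~\ref{claim:tree-enumeration} in the special case $\ell = 1$, $r_1 = r$: the number of $n$-vertex labeled trees for which the fixed set $R_0$ is a component of the forest induced by $[k]$ equals $n^{n-k-1}(n-r)^{k-r-1}(n-k)\, r^{r-1}$. Dividing by the total number $n^{n-2}$ of labeled trees on $n$ vertices (Cayley's formula) gives $\P(\mathbf{1}_{R_0} = 1) = n^{1-k}(n-r)^{k-r-1}(n-k)\, r^{r-1}$.

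Combining the two steps yields $\E X_{n,r}^{(k)} = \binom{k}{r}\, n^{1-k}(n-r)^{k-r-1}(n-k)\, r^{r-1}$, and a short rearrangement of the powers of $n$ — using $r^{r-1}n^{1-r} = (r/n)^{r-1}$, $(n-k)/n = 1 - k/n$, and $(n-r)^{k-r-1}n^{r+1-k} = (1-r/n)^{k-r-1}$ — transforms this into the claimed formula~\eqref{eq:expected-components}. I do not expect any substantial obstacle here: the entire combinatorial content has already been isolated in Lemma~\ref{claim:tree-enumeration}, and the symmetry argument plus linearity reduce the theorem to bookkeeping. The only point requiring mild care is the handling of the boundary exponents (e.g.\ $k - r - 1 = -1$ when $[k]$ is itself connected, and the degenerate instance $n = k = r$), which I would verify directly against the combinatorial interpretation rather than reading the formula off blindly. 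As an alternative, one could also derive~\eqref{eq:expected-components} by introducing a generating function that marks components of size $r$, along the lines of the specification for $\mathcal{F}$ and $\mathcal{G}$ in Section~\ref{sec:root-cluster} and extracting coefficients via Lagrange inversion; but the combinatorial route above is shorter.
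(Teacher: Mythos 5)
Your proposal is correct and takes essentially the same route as the paper: write $X_{n,r}^{(k)}$ as a sum of indicators over $r$-subsets of $[k]$, invoke symmetry and linearity to reduce to a single fixed subset, and apply Lemma~\ref{claim:tree-enumeration} with $\ell = 1$. The paper's proof is a compressed version of exactly this argument, omitting only the explicit algebraic rearrangement you spell out at the end.
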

\begin{proof}
   Observe that $X_{n,r}^{(k)}$ can be written as a sum of Bernoulli random variables
   \[
      X_{n, r}^{(k)} = \sum_{\substack{S \subseteq [k] \\ \abs{S} = r}} X_{n, S}^{(k)},
   \]
   with $X_{n, S}^{(k)}$ being $0$ or $1$ depending on whether or not the vertices in $S$ form a cluster
   after $k$ uncover steps.
   By symmetry and linearity of the expected value, we have
   \[
      \E X_{n, r}^{(k)} = \sum_{\substack{S \subseteq [k] \\ \abs{S} = r}} \E X_{n, S}^{(k)}
      = \binom{k}{r} \E X_{n, [r]}^{(k)}.
   \]
   A formula for the expected value on the right-hand side follows from
   Lemma~\ref{claim:tree-enumeration}, and thus proves the theorem.
\end{proof}

In the spirit of the observation in~\eqref{eq:component-sum}, the
formula in Lemma~\ref{claim:tree-enumeration} provides
a combinatorial proof for the following summation identity.

\begin{corollary}\label{cor:cluster-sum}
   Let $n, k \in \Z_{\geq 0}$ with $0\leq k\leq n$. Then,
   the identity
   \begin{equation}\label{eq:cluster-sum}
      \sum_{r = 1}^{k} \binom{k}{r} r^r n^{n-k-1} (n-r)^{k-r-1} (n-k) = k n^{n-2}
   \end{equation}
   holds.
\end{corollary}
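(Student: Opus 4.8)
The plan is to use the double-counting argument hinted at, applying Lemma~\ref{claim:tree-enumeration} in the case $\ell = 1$. I would count, in two ways, the pairs $(T,S)$ where $T \in \mathcal{T}_n$ and $S$ is a connected component of the forest $T[[k]]$ induced by the vertices with labels in $[k]$, each pair weighted by $|S|$ (equivalently, the triples $(T,S,v)$ with $v \in S$ and $S$ a component of $T[[k]]$). I may assume $k<n$ throughout; the endpoint $k=n$ is degenerate and is discussed at the end.

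First, fixing $T$: the components of $T[[k]]$ partition $[k]$, which is exactly the content of~\eqref{eq:component-sum}, so $T$ contributes total weight $\sum_S |S| = k$. Summing over all $n^{n-2}$ trees in $\mathcal{T}_n$ gives the weighted total $k\,n^{n-2}$, the right-hand side of~\eqref{eq:cluster-sum}.

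Second, grouping by the size $r = |S|$, which ranges over $1 \le r \le k$: for a fixed $r$-subset $S \subseteq [k]$, Lemma~\ref{claim:tree-enumeration} applied with $\ell = 1$, $R_1 = S$, $r_1 = r$ says that the number of $T \in \mathcal{T}_n$ for which $S$ is a component of $T[[k]]$ equals $n^{n-k-1}(n-r)^{k-r-1}(n-k)\,r^{r-1}$. There are $\binom{k}{r}$ such subsets $S$, and each resulting pair contributes weight $r$, so the weighted total is
\[
   \sum_{r=1}^{k} r\binom{k}{r} n^{n-k-1}(n-r)^{k-r-1}(n-k)\,r^{r-1}
   = \sum_{r=1}^{k} \binom{k}{r} r^r\, n^{n-k-1}(n-r)^{k-r-1}(n-k),
\]
which is the left-hand side of~\eqref{eq:cluster-sum}. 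Equating the two counts finishes the proof. As an alternative I could instead take expectations in~\eqref{eq:component-sum} to get $\sum_{r=1}^k r\,\E X_{n,r}^{(k)} = k$, substitute the closed form~\eqref{eq:expected-components} from Theorem~\ref{thm:expected-number-of-clusters}, and clear denominators; this amounts to the same computation.

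There is no substantial obstacle here; the only point to watch is the term $r=k$ in the second count, where the factor $(n-r)^{k-r-1}(n-k)$ must be read as $(n-k)^{\,k-r} = (n-k)^0 = 1$ (consistently, the number of trees with $[k]$ itself a component of $T[[k]]$ is $n^{n-k-1}k^{k-1}$, as one checks directly by extending a tree on $[k]$ to a tree on $[n]$). With the convention $0^0 = 1$ this reading also makes the stated identity valid at $k=n$.
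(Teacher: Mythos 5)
Your double-counting of triples $(T,S,v)$ with $v\in S$ and $S$ a component of $T[[k]]$ is exactly the argument the paper gives (the right-hand side enumerates vertices in $[k]$ over all trees, the left-hand side groups by component size), just spelled out in more detail and with the $r=k$ boundary case checked explicitly. Correct, and the same approach as the paper.
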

\begin{proof}
   The right-hand side enumerates the vertices in $[k]$ in all labeled trees
   on $n$ vertices. The left-hand side does the same, with the summands enumerating
   the vertices in connected components of size $r$.
\end{proof}

\begin{remark}
   Observe that the identity in~\eqref{eq:cluster-sum} can be rewritten as
   \[
      \sum_{r = 1}^{k} \binom{k}{r} r^r (n-r)^{k-r-1} = \frac{k}{n-k} n^{k-1},
   \]
   which is a specialized form of Abel's Binomial Theorem---a classical, and
   well-known result; see, e.g., \cite{Riordan:1968:combinatorial-identities}.
\end{remark}

For a tree $T\in\mathcal{T}$, let $\cmax{k}(T)$ denote the largest
connected component of $T$ after uncovering the first $k$ vertices.

\begin{theorem}
   Let $n\in\Z_{\geq 0}$, and let $T_n\in\mathcal{T}_n$ be
   a tree chosen uniformly at random. Then the behavior of the random variable
   $\cmax{k}(T_n)$ as $n\to\infty$ can be described as follows:
   \begin{itemize}
   \item for $k = n-d$ with $d = \omega(\sqrt{n})$ (\emph{subcritical case}),
   we have $\cmax{k}(T_n)/n \xrightarrow{p} 0$.
   \item for $k = n-d$ with $d = o(\sqrt{n})$ (\emph{supercritical case}), we have $\cmax{k}(T_n)/n \xrightarrow{p} 1$.
   With high probability, there is one ``giant'' component whose size is asymptotically equal to $n$.
   \end{itemize}
\end{theorem}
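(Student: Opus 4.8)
The plan is to reduce both parts to the results on the root-cluster size $R_n^{(k)}$ from Section~\ref{sec:root-cluster}: a first-moment argument handles the subcritical regime, and a root-randomization coupling handles the supercritical one.

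\textbf{Subcritical case.} Fix $\varepsilon\in(0,1)$. A component of size at least $\varepsilon n$ exists exactly when $\sum_{r\ge\varepsilon n}X_{n,r}^{(k)}(T_n)\ge 1$, so Markov's inequality gives
\[
  \P\bigl(\cmax{k}(T_n)\ge\varepsilon n\bigr)\le\sum_{r\ge\varepsilon n}\E X_{n,r}^{(k)}.
\]
I would then use the identity $\E X_{n,r}^{(k)}=\frac{n}{r}\,\P(R_n^{(k)}=r)$, valid for $1\le r\le k<n$; it is immediate upon comparing Theorem~\ref{thm:expected-number-of-clusters} with the probability mass function of Theorem~\ref{thm:R_Prob_labeled}, and it also has a direct combinatorial explanation via the $\ell=1$ case of Lemma~\ref{claim:tree-enumeration} (the factor $r$ accounts for the choice of root inside the cluster, and the normalizations $n^{n-1}$ and $n^{n-2}$ differ by a factor $n$). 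Since $r\ge\varepsilon n$ throughout the sum, this yields
\[
  \P\bigl(\cmax{k}(T_n)\ge\varepsilon n\bigr)\le\frac1\varepsilon\,\P\bigl(R_n^{(k)}\ge\varepsilon n\bigr)\le\frac{\E R_n^{(k)}}{\varepsilon^2 n}.
\]
Finally, the integral representation~\eqref{eq:R_Exp_labeled:integral_representation} together with the bound $e^{-x}(1+x/n)^k\le e^{-(1-k/n)x}=e^{-dx/n}$ already used in the proof of Theorem~\ref{thm:R_Exp_labeled} yields $\E R_n^{(k)}\le\int_0^\infty x\,e^{-dx/n}\,dx=(n/d)^2$, so that $\P(\cmax{k}(T_n)\ge\varepsilon n)\le n/(\varepsilon^2 d^2)\to 0$ whenever $d=\omega(\sqrt n)$. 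As $\varepsilon$ was arbitrary, $\cmax{k}(T_n)/n\xrightarrow{p}0$.

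\textbf{Supercritical case.} Here I would use that a uniformly random \emph{rooted} labeled tree of size $n$ arises from a uniformly random unrooted labeled tree $T_n$ by choosing its root uniformly among the $n$ vertices. Under this coupling the root cluster is one of the components of the uncovered subgraph (or is empty when the root has label $>k$), so $R_n^{(k)}\le\cmax{k}(T_n)\le k\le n$. Hence for any fixed $\varepsilon\in(0,1)$,
\[
  \P\bigl(\cmax{k}(T_n)\le(1-\varepsilon)n\bigr)\le\P\bigl(n-R_n^{(k)}\ge\varepsilon n\bigr)\le\frac{\E\bigl(n-R_n^{(k)}\bigr)}{\varepsilon n}.
\]
By the supercritical asymptotics of $\E R_n^{(k)}$ in Theorem~\ref{thm:R_Exp_labeled}, which hold throughout the range $d=o(\sqrt n)$, we have $\E R_n^{(k)}\sim n$ and therefore $\E(n-R_n^{(k)})=o(n)$, so the bound above tends to $0$. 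Combined with the trivial inequality $\cmax{k}(T_n)/n\le 1$, this gives $\cmax{k}(T_n)/n\xrightarrow{p}1$. Since this limit exceeds $\tfrac12$, with high probability there is a unique component of size larger than $n/2$; it is the asserted giant component, and its size is $n$ to leading order.

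The only step I expect to be non-routine is the passage between the unrooted picture (in which $\cmax{k}$ lives) and the rooted picture (for which the moment and distributional estimates were derived): the identity $\E X_{n,r}^{(k)}=\frac nr\P(R_n^{(k)}=r)$ on the subcritical side and the sandwich $R_n^{(k)}\le\cmax{k}(T_n)$ on the supercritical side. Once these are available, the argument reduces to Markov's inequality applied to the first moment of $R_n^{(k)}$ computed in Theorem~\ref{thm:R_Exp_labeled}, with no further work on the explicit tree-enumeration formulas.
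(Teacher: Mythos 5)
Your proof is correct and follows essentially the same route as the paper: both regimes are handled by Markov-type bounds derived from the first moment of the root-cluster size $R_n^{(k)}$, using the relation between cluster counts and root-cluster probabilities (a cluster of size $r$ is the root cluster with probability $r/n$). Your explicit rooted-to-unrooted coupling and the non-asymptotic bound $\E R_n^{(k)} \le (n/d)^2$ are minor stylistic variations on the same argument.
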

\begin{proof}
   For the subcritical case, we use the expected root cluster size
   from Theorem~\ref{thm:R_Exp_labeled}. Since a cluster of size $r$ contains the root with probability $\frac{r}{n}$, we have
   \begin{align*}
      \frac{n^2}{d^2} &\sim \E R_{n}^{(n-d)} = \sum_{r = 0}^{n-d} \E X_{n,r}^{(n-d)} \cdot r \cdot \frac{r}{n}
      \geq \sum_{r = m}^{n-d} \E X_{n,r}^{(n-d)} \frac{r^2}{n}
      \geq \frac{m^2}{n} \sum_{r=m}^{n-d} \E X_{n,r}^{(n-d)}\\
      &\geq \frac{m^2}{n} \P(\cmax{n-d}(T_n) \geq m).
   \end{align*}
   This implies that
   \[
      \P(\cmax{n-d}(T_n) \geq m) = \Oh\Bigl(\frac{n^3}{d^2 m^2}\Bigr),
   \]
   so if $m = \epsilon n$ for any fixed $\epsilon > 0$, we have $\P(\cmax{n-d}(T_n) \geq m) \to 0$.

   In the supercritical case, we recall the corresponding case for
   the size of the root cluster from Theorem~\ref{thm:R_Exp_labeled}. Using Markov's
   inequality yields, for any $\epsilon > 0$,
   \[ \P(n - R_n^{(k)} \geq \epsilon n) \leq \frac{n - \E(R_{n}^{(k)})}{\epsilon n} \sim \frac{d \sqrt{n}}{\epsilon n} \xrightarrow{n\to\infty} 0. \]
   Thus, the root cluster is the largest cluster of size $\sim n$ with high probability.
   Translating this from rooted to unrooted trees proves the theorem.
\end{proof}

In the critical case where $n - k \sim c\sqrt{n}$ for a constant $c$,
we are also able to characterize the behavior of $\cmax{k}(T_n)/n$:
this variable converges weakly to a continuous
limiting distribution. In order to describe the distribution, we
first require the following lemma.

\begin{lemma}\label{lem:no_doubled}
Suppose that $n - k \sim c \sqrt{n}$, and fix $\alpha > 0$. The probability that the forest induced by the vertices with labels in $[k]$ of a uniformly random labeled tree $T_n$ with $n$ vertices contains two components, each with at least $\alpha n$ vertices, whose size is equal, goes to $0$ as $n \to \infty$.
\end{lemma}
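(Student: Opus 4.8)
The plan is to control the second factorial moments of the cluster-counting variables $X_{n,r}^{(k)}$ from Section~\ref{sec:largest-component}. Observe first that if the forest induced by the vertices in $[k]$ has two components of equal size $r$, then necessarily $2r\le k$ and $X_{n,r}^{(k)}(T_n)\ge 2$; in particular the event in the lemma is vacuous unless $\alpha<1/2$, which I henceforth assume. A union bound over the relevant sizes $r$, followed by Markov's inequality applied to $\binom{X_{n,r}^{(k)}}{2}$, then yields
\[
   \P\bigl(\exists\,r\ge\alpha n\colon X_{n,r}^{(k)}(T_n)\ge2\bigr)\le\sum_{r=\lceil\alpha n\rceil}^{\lfloor k/2\rfloor}\tfrac12\,\E\bigl(X_{n,r}^{(k)}(X_{n,r}^{(k)}-1)\bigr),
\]
and the left-hand side dominates the probability in the lemma. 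Writing $X_{n,r}^{(k)}=\sum_{S}X_{n,S}^{(k)}$ over $r$-subsets $S\subseteq[k]$ as in the proof of Theorem~\ref{thm:expected-number-of-clusters}, and using that two intersecting vertex sets cannot both be components of the induced forest, symmetry gives $\E(X_{n,r}^{(k)}(X_{n,r}^{(k)}-1))=\binom{k}{r}\binom{k-r}{r}\,p_2(r)$, where $p_2(r)$ is the probability that two fixed disjoint $r$-subsets of $[k]$ both form components. By Lemma~\ref{claim:tree-enumeration} with $\ell=2$ and $r_1=r_2=r$, this is the count~\eqref{eq:tree-enumeration:formula} divided by $n^{n-2}$, so
\[
   \E\bigl(X_{n,r}^{(k)}(X_{n,r}^{(k)}-1)\bigr)=\binom{k}{r}\binom{k-r}{r}\,(n-k)^2\,r^{2r-2}\,(n-2r)^{k-2r-1}\,n^{1-k}.
\]

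To estimate the sum I would set $d=n-k\sim c\sqrt n$ and $m=k-2r$, and exploit the identity $n-2r=d+m$. For $m\ge1$, Stirling's formula (keeping the polynomial corrections) rewrites the right-hand side above as
\[
   \Theta\!\left(\frac{(n-k)^2\sqrt k}{r^3\sqrt m\,(n-2r)}\right)\cdot n\left(\frac kn\right)^{\!k}\left(\frac{d+m}{m}\right)^{\!m},
\]
while for $m=0$ one checks directly that the moment is $\Oh(n^{-1/2}e^{-c\sqrt n})$. The key point is that the two "big" exponential factors nearly cancel: $\bigl(\tfrac kn\bigr)^k\le e^{-d(1-d/n)}=\Oh(e^{-d})$, and $e^{-d}\bigl(\tfrac{d+m}{m}\bigr)^m=e^{-g(m)}$ with $g(m):=d-m\log(1+d/m)$, which is nonnegative and decreasing in $m$, satisfying $g(m)\ge(1-\log2)d$ for $m\le d$ and $g(m)\ge\frac{d^2}{6m}$ for $m\ge d$. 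Bounding the polynomial prefactor crudely by $r\ge\alpha n$, $\sqrt k\le\sqrt n$, $(n-k)^2=\Oh(n)$ and $n-2r=d+m$, one gets $\E(X_{n,r}^{(k)}(X_{n,r}^{(k)}-1))=\Oh\bigl(e^{-g(m)}/(\sqrt n\,\sqrt m\,(d+m))\bigr)$ for $m\ge1$. Summing over the admissible $m$ (an arithmetic progression of step $2$ running up to roughly $(1-2\alpha)n$) and splitting at $m=d$: for $m\le d$ the factor $e^{-g(m)}\le e^{-(1-\log2)d}$ makes the contribution exponentially small, and for $m\ge d$ the bound $\sqrt m\,(d+m)\ge m^{3/2}$ gives a contribution of order $n^{-1/2}\sum_{m>d}m^{-3/2}=\Oh(n^{-1/2}d^{-1/2})=\Oh(n^{-3/4})$. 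Together with the negligible $m=0$ term, the whole sum is $\Oh(n^{-3/4})\to0$, which proves the lemma.

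The main obstacle is exactly this final estimate. Taken separately, the exponential factor $\bigl(\tfrac kn\bigr)^k\bigl(\tfrac{d+m}{m}\bigr)^m$ and the Stirling prefactor vary on scales that could each swamp the other, and it is only after combining them — and after retaining, rather than discarding, the polynomial corrections in Stirling's formula, since the crude estimate $\binom ab\le a^ab^{-b}(a-b)^{-(a-b)}$ loses a factor of order $n$ and yields merely an $\Oh(1)$ bound on the sum — that the cancellation becomes visible. A secondary nuisance is the behaviour near $r=k/2$, where $m$ is small and the prefactor $1/\sqrt m$ is large; there one must use that the exponential suppression $e^{-g(m)}$ is correspondingly strong, which is why the case split on the size of $m$ is needed.
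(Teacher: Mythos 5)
Your proposal is correct and takes essentially the same approach as the paper: both use the $\ell=2$ case of Lemma~\ref{claim:tree-enumeration} to write down the probability (equivalently, the second factorial moment of $X_{n,r}^{(k)}$, the same quantity up to a factor of $2$) of two equal-sized large components, estimate it via Stirling's formula, split into cases depending on whether $m=k-2r$ is small or large, and sum over $r$. Your split at $m=d$ with the auxiliary function $g(m)$ is a cosmetic repackaging of the paper's split at $k-2a=n^{3/4}$ and the bounded function $f(x)=x^{3/2}e^{-x/2}$; your bound $\Oh(n^{-3/4})$ is slightly weaker than the paper's $\Oh(n^{-1})$, but both suffice.
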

\begin{proof}
Suppose that there are two components with $a$ vertices each, where $a \geq \alpha n$. By Lemma~\ref{claim:tree-enumeration}, the probability for this to happen is given by
$$P(a) = \frac{\binom{k}{a,a,k-2a} n^{n-k-1}(n-2a)^{k-2a-1} (n-k)^2 a^{2a-2}}{n^{n-2}},$$
provided that $k \geq 2a$ (otherwise, the probability is trivially $0$). The initial multinomial coefficient gives the number of ways to choose the labels of the two components, the denominator is simply the total number of labeled trees. Our aim is to estimate this expression. The case $k = 2a$ is easy to deal with separately, so assume that $k > 2a$. Then by Stirling's formula we have, for some constant $C_1$,
\begin{align*}
P(a) &\leq \frac{C_1 k^{k+1/2}}{a^{2a+1}(k-2a)^{k-2a+1/2}} \cdot \frac{n^{n-k-1}(n-2a)^{k-2a-1} (n-k)^2 a^{2a-2}}{n^{n-2}} \\
&= \frac{C_1 k^{1/2}(n-k)^2n}{a^3(n-2a)(k-2a)^{1/2}} \Big( 1 + \frac{n-k}{k} \Big)^{-k} \Big( 1 + \frac{n-k}{k-2a} \Big)^{k-2a}.
\end{align*}
It is well known that $(1+\beta/x)^x$ is increasing in $x$ for fixed $\beta$. So if $k - 2a \leq n^{3/4}$, we have, using the assumption that $n-k \sim c \sqrt{n}$,
\begin{align*}
\Big( 1 &+ \frac{n-k}{k} \Big)^{-k} \Big( 1 + \frac{n-k}{k-2a} \Big)^{k-2a} \\ &\leq \Big( 1 + \frac{n-k}{k} \Big)^{-k} \Big( 1 + \frac{n-k}{n^{3/4}} \Big)^{n^{3/4}} \\
&= \exp \Big({-k} \log \Big( 1 + \frac{n-k}{k} \Big) + n^{3/4} \log \Big( 1 + \frac{n-k}{n^{3/4}} \Big) \Big) \\
&= \exp \Big({-k} \Big( \frac{n-k}{k} + \Oh(n^{-1}) \Big) + n^{3/4} \Big( \frac{n-k}{n^{3/4}} - \frac{(n-k)^2}{2n^{3/2}} + \Oh(n^{-3/4}) \Big)\Big) \\
&= \exp \Big( {-c^2} n^{1/4} + o(n^{1/4}) \Big).
\end{align*}
In this case, $P(a)$ goes to $0$ faster than any power of $n$. Otherwise, i.e., if $k - 2a > n^{3/4}$, we have $k-2a \sim n-2a$, and using the assumption that $a \geq \alpha n$ as well as the same Taylor expansion as above, we obtain
$$P(a) \leq \frac{C_2(n-k)^2}{n^{3/2}(n-2a)^{3/2}} \exp\Big({- \frac{(n-k)^2}{2(n-2a)} }\Big)$$
for a constant $C_2$. We can rewrite this as
$$P(a) \leq \frac{C_2}{n^{3/2}(n-k)} f \Big( \frac{(n-k)^2}{n-2a} \Big)$$
with $f(x) = x^{3/2} e^{-x/2}$. Since this function is bounded, we have proven that $P(a) = \Oh(n^{-2})$, uniformly in $a$. Summing over all possible values of $a$, it follows that the probability in question is $\Oh(n^{-1})$. In particular, it goes to $0$.
\end{proof}

Now we are able to prove the following description of the limiting
distribution of $\cmax{k}(T_n)/n$.

\begin{theorem}
   Suppose that $n - k \sim c \sqrt{n}$, and fix $\alpha > 0$. The probability that the forest induced by the vertices with labels in $[k]$ of a uniformly random labeled tree $T_n$ with $n$ vertices contains a component with at least $\alpha n$ vertices tends to
   \begin{equation*}
   \sum_{j \geq 1} \frac{(-1)^{j-1} c^j}{(2\pi)^{j/2}} \idotsint\limits_{\substack{\alpha \leq t_1 < \cdots < t_j \\ \tau_j = t_1 + \cdots + t_j < 1}} \prod_{i=1}^j t_i^{-3/2} (1-\tau_j)^{-3/2} \exp \Big( - \frac{c^2\tau_j}{2 (1-\tau_j)} \Big) \,dt_1 \cdots dt_j
   \end{equation*}
   as $n \to \infty$.
\end{theorem}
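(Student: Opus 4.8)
The plan is to compute the probability in question via inclusion–exclusion over the events that a specific collection of disjoint vertex sets each forms a "large" component (of size at least $\alpha n$), using Lemma~\ref{claim:tree-enumeration} to evaluate each term exactly, and then pass to the limit $n\to\infty$. Concretely, for a fixed labeled tree $T_n$ let $A_S$ denote the event that $S\subseteq[k]$ forms a component of the forest induced by $[k]$. The probability that \emph{some} component has at least $\alpha n$ vertices equals the probability that $\bigcup_{|S|\ge\alpha n} A_S$ occurs. Since $\alpha>0$, at most $\lfloor 1/\alpha\rfloor$ such large components can coexist, and by Lemma~\ref{lem:no_doubled} the contribution from configurations with two equal-sized large components is $o(1)$; hence the Bonferroni/inclusion–exclusion expansion is finite and, up to $o(1)$, equals
\[
   \sum_{j\ge 1}(-1)^{j-1}\sum_{\substack{S_1,\dots,S_j\subseteq[k]\text{ disjoint}\\ |S_i|\ge\alpha n,\ |S_1|<\cdots<|S_j|}} \P(A_{S_1}\cap\cdots\cap A_{S_j}).
\]

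First I would use Lemma~\ref{claim:tree-enumeration} with $\ell=j$ and $r_i=|S_i|=:a_i$ to get the exact joint probability
\[
   \P(A_{S_1}\cap\cdots\cap A_{S_j}) = \frac{n^{n-k-1}(n-a_1-\cdots-a_j)^{k-a_1-\cdots-a_j-1}(n-k)^j\,a_1^{a_1-1}\cdots a_j^{a_j-1}}{n^{n-2}},
\]
which depends only on the sizes $a_i$, not on the particular sets. Summing over the choice of the sets themselves contributes a multinomial coefficient $\binom{k}{a_1,\dots,a_j,k-a_1-\cdots-a_j}$ for each ordered size vector $a_1<\cdots<a_j$. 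So each inner sum becomes a sum over integer size vectors $(a_1,\dots,a_j)$ with $a_i\ge\alpha n$, $a_1<\cdots<a_j$, and $a_1+\cdots+a_j\le k$, of an explicit product of a multinomial coefficient, powers of the $a_i$'s, a power of $n-k$, and a power of $n-\sum a_i$.

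Next I would apply Stirling's formula to the multinomial coefficient and to each $a_i^{a_i-1}$, exactly as in the proof of Lemma~\ref{lem:no_doubled}. Writing $t_i=a_i/n$ and $\tau_j=\sum t_i$, the dominant balance is: the factorial terms combine with $a_i^{a_i-1}$ and $(n-\sum a_i)^{k-\sum a_i-1}$ so that all exponential-in-$n$ factors cancel except a factor $\bigl(1+\tfrac{n-k}{n-n\tau_j}\bigr)^{\,n-n\tau_j}\bigl(1+\tfrac{n-k}{k}\bigr)^{-k}$-type expression; using $n-k\sim c\sqrt n$ and the Taylor expansion $\log(1+x)=x-x^2/2+\cdots$ this collapses to $\exp\!\bigl(-\tfrac{c^2\tau_j}{2(1-\tau_j)}\bigr)\cdot(1+o(1))$. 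The polynomial-in-$n$ prefactors assemble into $c^j (2\pi)^{-j/2}\prod_i t_i^{-3/2}\,(1-\tau_j)^{-3/2}\cdot n^{-j}$, and the extra $n^{-j}$ turns the $j$-fold sum over integer size vectors into a Riemann sum for the $j$-fold integral over $\alpha\le t_1<\cdots<t_j$ with $\tau_j<1$. This yields the stated series term by term.

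The main obstacle will be justifying the interchange of the limit $n\to\infty$ with the (finite, but $n$-dependent in length) inclusion–exclusion sum and with the Riemann-sum limit \emph{uniformly}: one needs a dominated-convergence–type bound showing that the tails of each size-vector sum (and the terms where some $a_i$ is close to $\alpha n$ or where $\tau_j$ is close to $1$) are controlled. The key estimates are exactly those already established in the proof of Lemma~\ref{lem:no_doubled} — in particular the bound $P(a)=\Oh(n^{-2})$ uniformly in $a$ there generalizes to $\Oh(n^{-j-1})$ for $j$ large components, so each inner sum is $\Oh(1)$ and the contribution of $j\ge j_0$ terms is negligible once $j_0>1/\alpha$. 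Combined with the boundedness of $x\mapsto x^{3/2}e^{-x/2}$ and the integrability of $t^{-3/2}$ away from $0$ (guaranteed by $t_i\ge\alpha$), this makes the passage to the limit rigorous; the remaining computation is the routine Stirling expansion sketched above.
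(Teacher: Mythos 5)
Your proposal matches the paper's proof essentially step for step: both use inclusion–exclusion over collections of large components (reduced, via Lemma~\ref{lem:no_doubled}, to the case of distinct sizes), evaluate each term exactly with Lemma~\ref{claim:tree-enumeration} (the multinomial coefficient absorbing the choice of labels), apply Stirling's formula with $r_i = t_i n$ to obtain the $\frac{c^j}{n^j(2\pi)^{j/2}}\prod_i t_i^{-3/2}(1-\tau_j)^{-3/2}e^{-c^2\tau_j/(2(1-\tau_j))}$ asymptotics, and pass to the limit by recognizing the resulting sums as Riemann sums for the stated integrals, with the finiteness of the expansion (terms vanish for $j\alpha>1$) and the uniform $\Oh(n^{-\ell})$ bound supplying the required dominated-convergence control. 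No essential differences.
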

\begin{proof}
   Let $r_1,\ldots,r_\ell$ be positive integers with $\alpha n \leq r_1 < \cdots < r_\ell$ and $r_1+\cdots+r_\ell \leq k$. By Lemma~\ref{claim:tree-enumeration}, the probability that the forest induced by vertices with labels in $[k]$ has components of sizes $r_1,\ldots,r_\ell$ is given by the following formula, with $r = r_1 + \cdots + r_{\ell}$:
   $$P(r_1,\ldots,r_{\ell}) = \frac{\binom{k}{r_1,\ldots,r_{\ell},k-r} n^{n-k-1}(n-r)^{k-r-1} (n-k)^{\ell} r_1^{r_1-1}\cdots r_{\ell}^{r_{\ell}-1}}{n^{n-2}},$$
   and the same argument as in Lemma~\ref{lem:no_doubled} shows that this probability is $\Oh(n^{-\ell})$, uniformly in $r_1,\ldots,r_\ell$. Moreover, if we set $r_i = t_i n$, Stirling's formula gives us the following asymptotic formula for this probability after some manipulations: with $t = t_1 + \cdots + t_{\ell}$, it is
   $$P(r_1,\ldots,r_{\ell}) \sim \frac{c^\ell}{n^{\ell} (2\pi)^{\ell/2}} \prod_{i=1}^\ell t_i^{-3/2} (1-t)^{-3/2} \exp \Big( - \frac{c^2t}{2 (1-t)} \Big).$$
   Moreover, by the inclusion-exclusion principle, the probability that there is at least one component of size at least $\alpha n$ is given by
   \begin{multline*}
   \sum_{\alpha n \leq r_1 \leq k} P(r_1) - \sum_{\substack{\alpha n \leq r_1 < r_2 \\ r_1+r_2 \leq k}} P(r_1,r_2) + \cdots \\
   + (-1)^{j-1} \sum_{\substack{\alpha n \leq r_1 < \cdots < r_j \\ r_1 + \cdots + r_j \leq k}} P(r_1,\ldots,r_j) + \cdots + \Oh(n^{-1}).
   \end{multline*}
   The final error term takes the possibility into account that there are two components of the same size. The probability of this event is $\Oh(n^{-1})$ by Lemma~\ref{lem:no_doubled}. Note that we actually only need a finite number of terms, as the sums become empty for $j \alpha > 1$. If we plug in the asymptotic formula for $P(r_1,\ldots,r_{\ell})$ and pass to the limit, the sums become integrals, and we obtain the desired formula.
\end{proof}

\section*{Acknowledgment}

We would like to thank Svante Janson for pointing out a gap in the proof of Theorem~\ref{thm:edge-process} in the extended abstract of this paper.

\bibliographystyle{abbrv}
\bibliography{clusters}

\end{document}